\documentclass[12pt]{amsart}
\usepackage{ amsmath, amsthm, amsfonts, amssymb, color}
 \usepackage{mathrsfs}
\usepackage{amsfonts, amsmath}
 \usepackage{amsmath,amstext,amsthm,amssymb,amsxtra}
 \usepackage{txfonts} 
 \usepackage[colorlinks, citecolor=blue,pagebackref,hypertexnames=false]{hyperref}
 \allowdisplaybreaks
 \usepackage{multirow}
 \usepackage{diagbox} 

\usepackage{mathtools} \mathtoolsset{showonlyrefs,showmanualtags}  

 \textwidth =168mm
 \textheight =225mm
\marginparsep=0cm
\oddsidemargin=2mm
\evensidemargin=2mm
\headheight=13pt
\headsep=0.8cm
\parskip=0pt
\hfuzz=6pt
\widowpenalty=10000
 \setlength{\topmargin}{-0.2cm}

\newcommand{\norm}[1]{\left\Vert#1\right\Vert}

\begin{document}

 \baselineskip 16.6pt
\hfuzz=6pt

\widowpenalty=10000

\newtheorem{cl}{Claim}
\newtheorem{theorem}{Theorem}[section]
\newtheorem{proposition}[equation]{Proposition}
\newtheorem{coro}[equation]{Corollary}
\newtheorem{lemma}[equation]{Lemma}
\newtheorem{definition}[equation]{Definition}
\newtheorem{assum}{Assumption}[section]
\newtheorem{example}[equation]{Example}
\newtheorem{remark}[equation]{Remark}
\renewcommand{\theequation}
{\thesection.\arabic{equation}}

\def\SL{\sqrt H}

\newcommand{\cent}{\operatorname{cent}}

\newcommand{\wid}{\operatorname{width}}
\newcommand{\heit}{\operatorname{height}}

\newcommand{\cdim}{n}

\newcommand{\set}[1]{\mathfrak{#1}}
\newcommand{\vrect}{\set{V}}
\newcommand{\Stack}{\set{S}}
\newcommand{\tile}{\set{T}}

\newcommand{\mar}[1]{{\marginpar{\sffamily{\scriptsize
        #1}}}}

\newcommand{\as}[1]{{\mar{AS:#1}}}
\newcommand\C{\mathbb{C}}
\newcommand\Z{\mathbb{Z}}
\newcommand\R{\mathbb{R}}
\newcommand\RR{\mathbb{R}}
\newcommand\CC{\mathbb{C}}
\newcommand\NN{\mathbb{N}}
\newcommand\ZZ{\mathbb{Z}}
\newcommand\HH{\mathbb{H}}
\def\RN {\mathbb{R}^n}
\renewcommand\Re{\operatorname{Re}}
\renewcommand\Im{\operatorname{Im}}

\newcommand{\mc}{\mathcal}
\newcommand\D{\mathcal{D}}
\def\hs{\hspace{0.33cm}}
\newcommand{\la}{\alpha}
\def \l {\alpha}
\newcommand{\eps}{\tau}
\newcommand{\pl}{\partial}
\newcommand{\supp}{{\rm supp}{\hspace{.05cm}}}
\newcommand{\x}{\times}
\newcommand{\lag}{\langle}
\newcommand{\rag}{\rangle}

\newcommand{\lset}{\left\lbrace}
\newcommand{\rset}{\right\rbrace}

\newcommand\wrt{\,{\rm d}}

\title[]{Schatten classes and commutators of Riesz transform\\ on Heisenberg group and applications}

\author{Zhijie Fan}
\address{Zhijie Fan, School of Mathematics and Statistics, Wuhan University, Wuhan 430072, China}
\email{00033732@whu.edu.cn}

\author{Michael Lacey}
\address{Michael Lacey, Department of Mathematics, Georgia Institute of Technology
Atlanta, GA 30332, USA}
\email{lacey@math.gatech.edu}

\author{Ji Li}
\address{Ji Li, Department of Mathematics, Macquarie University, Sydney}
\email{ji.li@mq.edu.au}

  \date{\today}

 \subjclass[2010]{47B10, 42B20, 43A85}
\keywords{Schatten class, commutator, Riesz transform, Heisenberg group, Besov space}

\begin{abstract} We study commutators with the  Riesz transforms on the  Heisenberg group $\HH^{n}$.  
The Schatten norm of these commutators is characterized in terms of Besov norms of the symbol.  
This generalizes the  classical Euclidean  results of Peller, Janson--Wolff and Rochberg--Semmes. 
The method of proof extends the earlier methods,  allowing us to address not just the Riesz transforms, but  also the Cauchy--Szeg\H{o} projection and  second order Riesz transforms on $\HH^{n}$ among other settings.
\end{abstract}

\maketitle

\tableofcontents

\section{Introduction}
The commutators with the Riesz transforms are bounded and compact on $L^p(\mathbb R^n)$, $1<p<\infty$, if and only if the symbol $b$ is in the BMO space and VMO space. This is well known, see  \cite{CRW,U}. A finer property of the commutators 
quantifies the Schatten norms, that is the $ \ell ^{p}$ norm of the singular values.  
This was  studied by Peller, for the Hilbert transform on  $\mathbb R$ \cite{P} (see also \cite{P2}). 
And in higher dimensions 
by Janson--Wolff in $\mathbb R^n$, $n\geq2$ \cite{JW}, and later on by Rochberg--Semmes \cite{RS0,RS}. 
The Schatten norm is characterized by the symbol being in certain Besov spaces.  
To summarize the known results are as follows. Let $H$ denote the Hilbert transform and let $R_\ell$ denote the $\ell$-th Riesz transform on $\mathbb R^n$.
\begin{itemize}
\item If $n=1$ and $0< p<\infty$, then $[b,H]$ is in Schatten class $S^p$ if and only if the symbol $b$ is in the Besov space $B_{p,p}^{1/p}(\mathbb R)$.   \cite{P,P2}.
\item Suppose $n\geq2$ and $b\in L^1_{{\rm loc}}(\mathbb R^n)$. When $p>n$, $[b,R_{\ell}]\in S^p$ if and only if $b\in B_{p,p}^{{n}/{p}}(\mathbb R^n)$; when $0<p\leq n$, $[b,R_{\ell}]\in S^p$ if and only if $b$ is a constant \cite{JW,RS}.
\end{itemize}

Notice that the cases of dimensions $ n =1$ and $ n >2$ differ somewhat. This is due to the distinguished nature of the Hilbert transform, 
particularly its close connection to analyticity.   Similar results have been  demonstrated in \cite{FR} for Szeg\H{o} projection, big and little Hankel operators on the unit ball and  Heisenberg group, in \cite{AFP} for the big Hankel operator on Bergman space of the disk, and in \cite{Z} for Hankel operators on the Bergman space of the unit ball. 

The Janson--Wolff inequality has bearing on the a quantised derivative of Alain Connes  introduced in \cite[IV]{Con}. 
In this setting, the (weak) Schatten norm of the commutator is relevant \cite{LMSZ}.    See also some recent progresses in different settings \cite{AP, FX, Is,  Pau, PoSu, PS}.

In this paper we extend the Janson--Wolff result to   commutators of Riesz transforms on Heisenberg groups. 
This requires us to revisit the methods of Janson--Wolff and Rochberg--Semmes, 
replacing  Fourier analytic methods they used with more robust real variable arguments.  
Our result not only recovers the result of Janson--Wolff \cite{JW} and Rochberg--Semmes \cite{RS} on $\mathbb R^n$, $n\geq2$, with the quantitative estimate of the Schatten norm (which was not showed explicitly before), but also opens the door to the study of commutator with certain Calder\'on--Zygmund operators in other important settings beyond $\mathbb R^n$. Examples of such Calder\'on--Zygmund operators include

\begin{enumerate}
\item the Cauchy--Szeg\H{o} projection from Siegel upper half space to its boundary (identified with Heisenberg group), see \cite[Chapter 12, Section 2.4]{St} and \cite{FR}; 

\item  certain second order Riesz transforms, such as the well-known Beurling--Ahlfors operator on the complex plane $\mathbb C$ and  second order Riesz transforms on $\HH^n$. Details will be provided in the last section; 

\item Riesz transforms in the Bessel setting \cite{BCC,Hu} and Neumann Laplacian setting \cite{LW}, which will be addressed in subsequent papers. 
\end{enumerate}

To be more explicit on our result,
let $\HH^{n}$ be Heisenberg group.  It  is a nilpotent Lie group with underlying manifold $\mathbb{C}^{n}\times \mathbb{R}=\{[z,t]:z\in\mathbb{C}^{n}\times\mathbb{R}\}$, the multiplication law
\begin{align}\label{Hn multi law}
[z,t][z^{\prime},t^{\prime}]=[z_{1},\cdots,z_{n},t][z_{1}^{\prime},\cdots,z_{n}^{\prime},t^{\prime}]:=\Big[z_{1}+z_{1}^{\prime},\cdots,z_{n}+z_{n}^{\prime},t+t^{\prime}+2{\rm Im}\Big(\sum_{j=1}^{n}z_{j}\overline{z}_{j}\Big)\Big]
\end{align}
and
the homogeneous norm $\rho(g)$ (details on the notation will be given in Section \ref{Preliminaries}).

For any $\ell=1,2,\ldots,2n$, let $R_{\ell}$ be the Riesz transform on Heisenberg groups $\HH^{n}$
and the commutator with $R_{\ell}$ is defined as follows.
$$[b,R_{\ell}](f)(x):= b(x)R_{\ell}(f)(x) - R_{\ell}(bf)(x).  $$
Next we recall the definition of the homogeneous Besov space in the following form.
\begin{definition}
{Suppose $1<p,q< \infty$} and $0<\alpha<1$. Let $f\in L_{\rm loc}^{1}(\HH^{n})$. Then we say that $f$ belongs to Besov space $B_{p,q}^{\alpha}(\HH^{n})$ if
\begin{align*}
{\int_{\HH^{n}}\frac{\|f(g \cdot )-f(\cdot)\|_{L^{p}(\HH^{n})}^{q}}{\rho(g)^{2n+2+q\alpha}}dg<\infty.}
\end{align*}
\end{definition}

 We recall the definition of the Schatten class $S^{p}$. Note that if $T$ is any compact operator on $L^{2}(\HH^{n})$, then $T^{*}T$ 
 is compact, symmetric and  positive. It is  diagonalizable. For $0<p<\infty$, we say that $T\in S^{p}$ if $\{\lambda_{n}\}\in \ell^{p}$, where $\{\lambda_{n}\}$ is the sequence of square roots of eigenvalues of $T^{*}T$ (counted according to multiplicity).

Our main theorem is the following.
\begin{theorem}\label{schatten}
Suppose that $0<p<\infty$ and $b\in  L^1_{{\rm loc}}(\HH^n)$. Then for any $\ell\in \{1,2,\cdots,2n\}$, one has  $[b,R_{\ell}]\in S^p$ 
if and only if 

\begin{enumerate}
\item $b\in B_{p,p}^{\frac{2n+2}{p}}(\HH^n)$, if $p>2n+2$; in this case we have $\|b\|_{B_{p,p}^{\frac{2n+2}{p}}(\HH^n)}\approx \|[b,R_{\ell}]\|_{S^p}$; 

\item $b$ is a constant, if\ $0<p\leq 2n+2$.

\end{enumerate}

\end{theorem}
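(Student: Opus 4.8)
The plan is to follow the Janson--Wolff/Rochberg--Semmes strategy, but replacing Fourier analysis by the real-variable machinery of dyadic cubes (Christ cubes) on $\HH^n$ together with a wavelet-type decomposition adapted to the group structure. The argument naturally splits into the \emph{sufficiency} direction (a Besov-space bound on $b$ implies the Schatten bound on $[b,R_\ell]$), the \emph{necessity} direction in the range $p>2n+2$ (a Schatten bound implies the Besov bound, giving the two-sided estimate $\|b\|_{B_{p,p}^{(2n+2)/p}}\approx\|[b,R_\ell]\|_{S^p}$), and the \emph{cut-off phenomenon} for $0<p\le 2n+2$ (finite Schatten norm forces $b$ constant). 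Throughout, the homogeneous dimension $Q=2n+2$ of $\HH^n$ plays the role that $n$ plays in the Euclidean theorems, which is why the critical exponent is $2n+2$ rather than $n$.

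For \emph{sufficiency} when $p>2n+2$, I would expand $b$ in a smooth wavelet/martingale-difference basis $\{\psi_I\}$ indexed by Christ cubes $I$ of $\HH^n$, writing $b=\sum_I \langle b,\psi_I\rangle\psi_I$, and correspondingly decompose $[b,R_\ell]=\sum_I \langle b,\psi_I\rangle [\psi_I,R_\ell]$. Each building block $[\psi_I,R_\ell]$ is, up to acceptable error, a rank-one (or finite-rank) operator localized near $I$ with operator norm controlled by $|I|^{-1/2}$ times the $L^\infty$-type size of $\psi_I$; the off-diagonal interactions decay because $R_\ell$ is a Calder\'on--Zygmund kernel on $\HH^n$ satisfying the usual size and smoothness estimates in the homogeneous metric. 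Summing via the triangle inequality for $S^p$ (valid for $p\ge 1$) or its $p$-quasi-norm analogue (for $p<1$, though here $p>2n+2\ge 2$ so $p\ge1$), one obtains $\|[b,R_\ell]\|_{S^p}^p\lesssim \sum_I \big(|I|^{-1/2}\|\psi_I\|\cdot|\langle b,\psi_I\rangle|\big)^p$, and a standard identification of the right-hand side with a Littlewood--Paley/wavelet characterization of $B_{p,p}^{Q/p}(\HH^n)$ finishes this half.

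For \emph{necessity} when $p>2n+2$, I would run the argument in reverse: test $[b,R_\ell]$ against normalized bumps $f_I,g_J$ supported on cubes $I,J$ that are ``well separated in direction $\ell$'' so that $\langle [b,R_\ell]f_I,g_J\rangle$ reproduces (a mean oscillation of) $b$ on $I\cup J$. Because $\{f_I\}$ and $\{g_J\}$ can be arranged to be almost-orthogonal systems, the Schatten-$p$ norm of $[b,R_\ell]$ dominates the $\ell^p$-norm of the matrix $\big(\langle [b,R_\ell]f_I,g_J\rangle\big)$ restricted to a suitable diagonal-like set of pairs, and this in turn dominates $\sum_I \big(|I|^{-1}\int_I|b-\langle b\rangle_I|\big)^p$ up to the correct normalization; median/oscillation comparisons then give the homogeneous Besov norm. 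The key structural input here is a \emph{lower bound on the Riesz kernel}: for each $\ell$ and each cube $I$ there is a companion region where $|K_\ell(g^{-1}h)|\gtrsim \rho(g^{-1}h)^{-Q}$ with a sign that does not oscillate, which is exactly the non-degeneracy one needs to ``see'' $b$ through the commutator; establishing this on $\HH^n$ (rather than quoting an explicit Fourier multiplier) is one of the technical cores of the paper.

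The \emph{main obstacle} is the endpoint/subcritical regime $0<p\le 2n+2$, where one must show that $[b,R_\ell]\in S^p$ forces $b$ constant. The expected mechanism: if $b$ is not constant, then on a positive-measure set $b$ has genuine oscillation at \emph{all} scales below some fixed scale, and one can extract from the testing argument above an infinite family of pairs $(I_k)$ of comparable cubes at geometrically separated scales on which the commutator has matrix entries bounded below by a fixed constant times the local oscillation, with near-orthogonality of the associated bump functions. This produces a lower bound $\|[b,R_\ell]\|_{S^p}^p\gtrsim \sum_k c_k^p$ where the number of available cubes at scale $2^{-j}$ grows like $2^{jQ}$ while the per-cube contribution only decays like $2^{-j}$ (in the relevant normalization) — so the sum diverges precisely when $p\le Q=2n+2$, unless all the $c_k$ vanish, i.e. $b$ is locally constant, hence constant by connectedness of $\HH^n$. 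Making this dimension count rigorous on the Heisenberg group — in particular handling the non-commutativity in the off-diagonal estimates and the fact that the Christ-cube grid is only approximately self-similar — is where the real work lies; the Euclidean proofs exploit exact dilation structure and explicit Fourier multipliers, and the point of this paper is to carry the argument through with only the Calder\'on--Zygmund size/smoothness of $R_\ell$ and the doubling/self-similarity of the metric measure space.
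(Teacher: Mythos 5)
Your high-level road map is essentially the one the paper follows: identify the homogeneous dimension $Q=2n+2$ as the critical exponent, use a Christ-tile/martingale structure and a kernel lower bound on $R_\ell$ for the necessity direction, and prove rigidity at $p\le Q$ by a dimension count. The necessity part in particular lines up well with the paper: the paper works with the Haar expansion and conditional expectations $E_k(b)$, compares to a median $\alpha_{\hat T}(b)$ on the companion tile $\hat T$ supplied by the kernel lower bound (Theorem~\ref{nondegen}), and then invokes the Rochberg--Semmes NWO inequality \eqref{e:NWO}; that is exactly your ``almost-orthogonal bumps'' and ``median comparison.''

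However, your \emph{sufficiency} argument has a genuine gap, and it is also not the route the paper takes. You propose to decompose $b=\sum_I\langle b,\psi_I\rangle\psi_I$ and treat each commutator $[\psi_I,R_\ell]$ as ``up to acceptable error a rank-one operator'' so that $\|[b,R_\ell]\|_{S^p}^p\lesssim\sum_I\bigl(|I|^{-1/2}\|\psi_I\|\,|\langle b,\psi_I\rangle|\bigr)^p$. This does not hold as stated: $[\psi_I,R_\ell]$ has kernel $(\psi_I(x)-\psi_I(y))K_\ell(x,y)$ and is an infinite-rank operator whose Schatten norm is not its operator norm, and the displayed $\ell^p$-type superadditivity $\lVert\sum_I A_I\rVert_{S^p}^p\lesssim\sum_I\lVert A_I\rVert_{S^p}^p$ is the quasi-triangle inequality valid only for $p\le1$, whereas here $p>2n+2$. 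Making a wavelet decomposition rigorous would require a serious almost-orthogonality/Schur-type estimate on the off-diagonal blocks; instead, the paper avoids this entirely by estimating the Schatten norm of the commutator \emph{directly from its kernel}: it applies the weak-type Russo--Goffeng bound of Janson--Wolff, $\|T\|_{S^{p,\infty}}\le\|K\|_{L^p,L^{p',\infty}}^{1/2}\|K^*\|_{L^p,L^{p',\infty}}^{1/2}$ for $p>2$, to the kernel $(b(g)-b(\hat g))K_\ell(g,\hat g)$, controls the mixed weak norm by $\|b\|_{B_{p,p}^{(2n+2)/p}}$ via the weak Young inequality, and then upgrades $S^{p,\infty}$ to $S^p$ by real interpolation across exponents. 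This is cleaner and is what makes $p>2$ (hence $p>2n+2$) essential in the sufficiency proof.

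For the endpoint $0<p\le 2n+2$, your dimension count is the right heuristic, but as written it rests on the vague statement that a non-constant $b$ ``has genuine oscillation at all scales below some fixed scale.'' The paper makes this precise by first mollifying $b$ (so that $b_\epsilon=b\ast\psi_\epsilon$ is smooth), picking a point $g_0$ with $\nabla b_\epsilon(g_0)\neq0$, and then using the stratified Taylor formula (Lemma~\ref{taylor}) together with the sign-separated companion tiles of Lemma~\ref{signlemma} to prove the quantitative lower bound $\bigl|\fint_{T'}b_\epsilon-\fint_{T''}b_\epsilon\bigr|\gtrsim\wid(T)|\nabla b_\epsilon(g_0)|$ for all small tiles $T$ near $g_0$. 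Combined with the count $\#\{T\in\tile_{-k}:d(\cent(T),g_0)<\varepsilon\}\gtrsim(2n+1)^{k(2n+2)}$ and Lemma~\ref{step1}, this forces divergence of the $\ell^{2n+2}$ norm unless $b_\epsilon$ is constant for every $\epsilon$, hence $b$ is constant. The mollification step and the Taylor lower bound are the concrete mechanism that your proposal gestures at but does not supply.
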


In the Euclidean setting, the Riesz transforms have an explicit form,   $\Omega(x)\over |x|^n$ where $n$ is the dimension of the underlying space and $\Omega(x)$ is a smooth homogeneous function of degree 0.  
This leads to arguments highly dependent on the form of the kernel.  
However, the Riesz transform kernel on Heisenberg group has no such convenient form. 
And, our argument depends upon recent developments.  
A pointwise lower bound of the Riesz transform kernel on stratified Lie groups (which covers the Heisenberg group) was established in \cite{DLLW}  to characterize the boundedness of the commutator of Riesz transform.  
We have to further develop this theme to prove the main result. See Theorem~\ref{nondegen} below.  
Indeed, Theorem~\ref{nondegen} is key to our proof, a canonical `non-degenerate' condition.    
It depends upon the kernel of the Riesz transforms only being zero on a set of zero measure, 
and being suitably large.  
In addition, the property aligns well with the  martingale structure on Heisenberg groups.  
Verifying this property should be central in   settings beyond the Euclidean.   We return to this point in \S\ref{appl}.


Our proof uses a natural martingale structure on the Heisenberg group, and an associated Haar basis, 
and crucially a notion of \emph{nearly weakly orthogonal} due to Rochberg--Semmes \cite{RS}.  
It is very well adapted to the analysis of Schatten norms in Harmonic Analysis settings.  See \eqref{e:NWO}.  
As with other methods, the median of the symbol on the atoms of the martingale is important.   


The paper is organized as follows. In Section \ref{Preliminaries}, we recall the tilings and Haar Basis on Heisenberg group and characterization of Schatten class. In Section \ref{Rieszsection} we recall the basic property for Riesz transform and then prove  the pointwise lower bound for the Riesz kernel (Theorem \ref{nondegen}). In Sections \ref{three} and \ref{four}, we give   the proof of Theorem \ref{schatten} for the cases $p>2n+2$ and $0<p\leq 2n+2$, respectively, which lies in Propositions \ref{schattenlarge1}, \ref{schattenlarge2} and \ref{mainprop}. In Section \ref{appl}, we provide the applications of our approach to some well-known Calder\'on--Zygmund operators beyond the Euclidean setting.

{Throughout the paper we denote the $L^{p}(\HH^n)$ norm of a function $f$ by $\|f\|_{p}$, $1\leq p\leq \infty$. Other norms (such as the Besov norm or Schatten norm) are given explicitly in the context. The
indicator function of a subset $E\subseteq X$ is denoted by $\chi_{E}$. We use $A\lesssim B$ to denote the statement that $A\leq CB$ for some constant $C>0$, and $A\approx B$ to denote the statement that $A\lesssim B$ and $B\lesssim A$.}

\section{Preliminaries on $\HH^n$}\label{Preliminaries}
\setcounter{equation}{0}

Let $\HH^{n}$ be a Heisenberg group, which is a nilpotent Lie group with underlying manifold $\mathbb{C}^{n}\times \mathbb{R}=\{[z,t]:z\in\mathbb{C}^{n}\times\mathbb{R}\}$ and multiplication law as in \eqref{Hn multi law}.
Then the identity of $\HH^{n}$ is the origin and the inverse is given by $[z,t]^{-1}=[-z,-t]$. In addition to the Heisenberg group multiplication law, for each positive number $\lambda$, non-isotropic dilations $\delta_{\lambda}$ on $\HH^{n}$ are given by
$$\delta_{\lambda}(g):=\delta_{\lambda}[z,t]:=[\lambda z,\lambda^{2}t].$$
Besides, the norm structure $\rho$ on $\HH$ is defined by
$$\rho(g)=\rho([z,t])=\max\{|z|_{\mathbb C^n},|t|^{1/2}\},$$
where $|z|_{\mathbb C^n}^2=\sum_{j=1}^{n} |z_j|^2$. The Haar measure on $\HH^{n}$ coincides with Lebesgue measure on $\mathbb{R}^{2n+1}$. For any measurable set $E\subset \HH^{n}$, $|E|$ denotes its Haar measure. It is direct to see that $\rho(g^{-1})=\rho(-g)=\rho(g)$ and $ \rho(\delta_{\lambda}(g))=\lambda \rho(g)$.

The $2n+1$ vector fields
$$X_{\ell}:=\frac{\partial}{\partial x_{\ell}}-2y_{\ell}\frac{\partial}{\partial t},\ \ Y_{\ell}:=\frac{\partial}{\partial y_{\ell}}+2x_{\ell}\frac{\partial}{\partial t},\ \  \mathcal T:=\frac{\partial}{\partial t},\ \ \ell=1,2,\cdots,n$$
form a natural basis for the Lie algebra of left-invariant vector field on $\HH^{n}$. For convenience, we set $X_{n+\ell}:=Y_\ell, \ell=1,2,\cdots,n$ and set $X_{2n+1}:=\mathcal T$. The standard sub-Laplacian $\Delta_{\HH}$ on the Heisenberg group is defined by $\Delta_{\HH}:=\sum_{\ell=1}^{2n}X_{\ell}^{2}$. For any multi-index $I=(i_{1},\cdots,i_{2n+1})\in\mathbb{N}^{2n+1}$, we set $X^{I}:=X_{1}^{i_{1}}X_{2}^{i_{2}}\cdots X_{2n+1}^{i_{2n+1}}$ and further set
$$|I|:=i_{1}+\cdots+i_{2n+1} \quad   \textup{ and}  \quad \ d(I):=i_{1}+\cdots+i_{2n}+2i_{2n+1}.$$
The integers  $I$ and $d(I)$ are said to be the topological degree and homogeneous degree of the differential $X^{I}$, respectively.

\subsection{Tiles on $\HH^n$}\label{tilesection}

{We recall the metrics and tilings in $\HH^n$ summarized in} \cite{CCLLO}.
We shall use the \emph{gauge distance} $d$, which is defined by setting
\begin{equation}\label{eq:gauge-distance}
\begin{aligned}
d(g, g') := \norm{ g'^{-1} \cdot g } = \norm{ g^{-1} \cdot g'},
\qquad\forall g, g' \in \HH^n ,
\end{aligned}
\end{equation}
where $\norm{{}\cdot{}}$ is given by
\begin{equation}\label{eq:gauge-norm}
\begin{aligned}
\norm{ (z,t) }
:= \max\lset |x_1|, |y_1|, \dots, |x_\cdim |, |y_\cdim |, |t|^{1/2} \rset
\qquad\forall (z,t)\in\HH^n .
\end{aligned}
\end{equation}
It is easy to see that $d$ is equivalent to the homogeneous norm $\rho$.  See \cite[Section 2.2]{Ty} for a discussion.
We write $B(g, r)$ for the ball in $\HH^n$ with center $g$ and radius $r$ constructed using the distance $d$.
We also use balls in the (algebraic) center of $\HH^n$, which may be identified with $\R$: we define $B^{*}(t, s) := \{ t' \in \R : |t-t'| < s \}$.
Tubes are sets of the form $g \cdot B(o, r) \cdot B^{*}(0, s)$, which are images of products of balls in $\HH^n  \times \R$ under the multiplication in \eqref{Hn multi law}.
We  recall that $T(g,r,s)$ is defined as
\begin{equation}\label{eq:def-tube}
T(g,r,s)
:= g \cdot B(o, r) \cdot B^{*}(0, s)
= B(g, r) \cdot B^{*}(0, s).
\end{equation}

We use the work of \cite{Str, Ty} on self-similar tilings to find a ``nice'' decomposition of $\HH^n$, analogous to the decomposition of $\R^n$ into dyadic cubes in classical harmonic analysis, and describe an analogue of a lemma of Journ\'e \cite{J}.
We identify $\C^\cdim$ with $\R^{2\cdim}$, $|z|_\infty$ denotes $\max\{ |x_1|, |y_1|, \dots |x_\cdim |, |y_\cdim | \}$, $Q_0$ denotes the cube $[-1/2,1/2)^{2\cdim}$, and $\HH^n_{\Z}$ denotes the subgroup $\{ (z,t) \in \HH^n : z \in \Z^{2\cdim}, t \in (2\cdim)^{-1} \Z\}$.


{\begin{theorem}[\cite{Str, Ty}]
There is a measurable function $f: Q_0 \to \R$ such that $f(0) = {1\over2(n+1)}$ and
\[
\frac{1}{4\cdim (\cdim + 1)}
\leq f(z)
\leq \frac{2\cdim + 1}{4\cdim (\cdim + 1)}
\qquad\forall z \in Q_0,
\]
such that the set $T_o$, defined by
\[
T_o := \lset (z,t) : z \in Q_0, f(z) - \frac{1}{2\cdim} \leq t < f(z) \rset,
\]
has the property that
\[
\delta_{2\cdim+1} (T_o) = \bigcup_{g \in \Delta} g \cdot T_o ,
\]
where $\Delta := \{ (z,t) \in \HH^n_{\Z} : |z|_\infty \leq \cdim : |t| \leq \cdim + 1 \}$.
\end{theorem}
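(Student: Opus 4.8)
The plan is to realise $T_o$ as a \emph{self-similar tile} of $\HH^n$ in the sense of Strichartz: the unique attractor of an iterated function system built from $\Delta$ and the contracting dilation $\delta_{1/(2n+1)}$, the displayed identity being exactly the fixed-point equation defining that attractor. First I would record the arithmetic. One checks that $\delta_{2n+1}$ is an automorphism of $\HH^n$ carrying $\HH^n_\Z$ into itself, with Jacobian $(2n+1)^{2n}\cdot(2n+1)^2=(2n+1)^{2n+2}$ on $\R^{2n+1}$, so $[\HH^n_\Z:\delta_{2n+1}(\HH^n_\Z)]=(2n+1)^{2n+2}$. Since $|z|_\infty\le n$ allows $(2n+1)^{2n}$ points $z\in\Z^{2n}$ and $|t|\le n+1$ allows $4n(n+1)+1=(2n+1)^2$ values $t\in(2n)^{-1}\Z$, we get $\#\Delta=(2n+1)^{2n+2}$, and a short computation shows the elements of $\Delta$ lie in pairwise distinct left cosets: $\Delta$ is a complete residue system.

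Next I would set up the iterated function system. For $g\in\Delta$ put $\phi_g(x):=\delta_{1/(2n+1)}(g\cdot x)$; since the gauge distance $d$ is left invariant and $1$-homogeneous under dilations, each $\phi_g$ is a similarity of ratio $(2n+1)^{-1}$, so by Hutchinson's theorem there is a unique non-empty compact $K\subset\HH^n$ with $K=\bigcup_{g\in\Delta}\phi_g(K)$. Applying the automorphism $\delta_{2n+1}$ turns this into $\delta_{2n+1}(K)=\bigcup_{g\in\Delta}g\cdot K$. Because $\Delta$ is a complete residue system, the self-affine tile theory on homogeneous groups shows that $K$ has positive Haar measure and that $\{\gamma\cdot K:\gamma\in\HH^n_\Z\}$ tiles $\HH^n$ up to null sets; $T_o$ is the half-open normalisation of $K$ for which this tiling is exact.

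It remains to extract the graph description. The horizontal parts of the maps $\phi_g$ are $z\mapsto(z+k)/(2n+1)$, $|k|_\infty\le n$, whose attractor (balanced base-$(2n+1)$ expansions) is $\overline{Q_0}=[-\tfrac12,\tfrac12]^{2n}$; hence the horizontal projection of $K$ is $\overline{Q_0}$. Over a fixed horizontal digit string the induced action on the central fibre is, at each stage, an affine contraction of ratio $(2n+1)^{-2}$ whose translation digits run over $\{\ell/(2n):|\ell|\le 2n(n+1)\}$ --- the $(2n+1)^2$ residues of $(2n)^{-1}\Z$ modulo $\delta_{2n+1}$; as these digits are consecutive and symmetric, the induced one-dimensional system is a translate of the balanced-digit tiling of $\R$, so each fibre $K_z$ ($z\in Q_0$) is a closed interval of length $1/(2n)$. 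Let $f(z)$ be its right endpoint and $T_o=\{(z,t):z\in Q_0,\ f(z)-\tfrac1{2n}\le t<f(z)\}$. Matching fibres in $\delta_{2n+1}(K)=\bigcup_{g\in\Delta}g\cdot K$ over a point $z+k$ ($z\in Q_0$, $|k|_\infty\le n$) yields the functional equation
\[
f\!\left(\frac{z+k}{2n+1}\right)=\frac1{(2n+1)^2}\Bigl(f(z)+2\,\Im\lag k,z\rag+(n+1)\Bigr),
\]
with $\lag k,z\rag:=\sum_j k_j\overline{z_j}$ the form appearing in the central shear of \eqref{Hn multi law}. Iterating along the base-$(2n+1)$ digit expansion of a point of $Q_0$ exhibits $f$ as a series with geometric ratio $(2n+1)^{-2}$; since $\sum_{i\ge1}(2n+1)^{-2i}=\tfrac1{4n(n+1)}$, this at once gives that $f$ is measurable (a pointwise limit of step functions), the value $f(0)$ (all digits zero), and the claimed two-sided bounds $\tfrac1{4n(n+1)}\le f\le\tfrac{2n+1}{4n(n+1)}$.

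The hard part will be the sharp control in this last step: a termwise estimate of the shear contributions $2\,\Im\lag k_i,z^{(i)}\rag$ over $\overline{Q_0}$ overshoots the stated bounds, so one must use that these terms cannot all be extremal simultaneously --- the digit chosen at level $i$ restricts the tail that enters the shear, and conversely --- which is precisely the delicate point in the Strichartz--Tyson analysis; organising $f$ through the displayed equation and isolating the worst-case digit patterns is the way I would push this through. The remaining issues are routine bookkeeping: verifying that $K$ tiles with no overlaps of positive measure and that the half-open conventions in $Q_0$ and $T_o$ can be arranged so that $\delta_{2n+1}(T_o)=\bigcup_{g\in\Delta}g\cdot T_o$ holds as an exact set identity. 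A cleaner self-contained route, avoiding the attractor altogether, is to take the displayed functional equation as the definition of $f$ (via its series), check measurability and the bounds directly, and then prove $\delta_{2n+1}(T_o)=\bigcup_{g\in\Delta}g\cdot T_o$ by the explicit verification that over each cube $k+Q_0$ the $(2n+1)^2$ translates $g\cdot T_o$, $g=(k,\ell/(2n))\in\Delta$, reassemble exactly into the fibre of $\delta_{2n+1}(T_o)$.
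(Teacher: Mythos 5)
The paper does not prove this result; it is imported verbatim as a black box from Strichartz and Tyson (\cite{Str,Ty}), so there is no internal proof to compare against, and what you have written is a reconstruction of that external argument. Your structural plan is the right one: identify $T_o$ with the attractor of the IFS $\phi_g(x)=\delta_{1/(2n+1)}(g\cdot x)$ over a complete residue system $\Delta$, pass to the fixed-point identity, use the product structure of the maps to get horizontal projection $\overline{Q_0}$ and fibers that are intervals of length $1/(2n)$, and read off $f$ from the resulting functional equation. The counting $\#\Delta=(2n+1)^{2n+2}$, the coset-disjointness of $\Delta$, and the derivation of the functional equation for $f$ are all correct.

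However, there is a genuine gap, and you name it yourself: the sharp two-sided bound $\tfrac{1}{4n(n+1)}\le f\le\tfrac{2n+1}{4n(n+1)}$ is the substantive content of the theorem, and a naive termwise majorization of the shear series $\sum_{i\ge1}(2n+1)^{-2i}\,2\,\Im\lag k_i,z^{(i)}\rag$ overshoots these constants; you do not carry out the compensation argument that makes the bound tight, and ``isolating the worst-case digit patterns is the way I would push this through'' is an intention, not a proof. Separately, your functional equation, evaluated along the all-zero digit string, yields $f(0)=(n+1)\sum_{i\ge1}(2n+1)^{-2i}=\tfrac{1}{4n}$, which equals the midpoint of the stated interval $\bigl[\tfrac{1}{4n(n+1)},\tfrac{2n+1}{4n(n+1)}\bigr]$ but agrees with the claimed value $f(0)=\tfrac{1}{2(n+1)}$ only when $n=1$. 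This discrepancy is worth flagging: either the displayed theorem carries a normalization or a typographical slip inherited from the source, or your choice of digit set/half-open conventions differs from Tyson's; in any case a finished argument must reconcile the constant you obtain with the one in the statement, and as written the proposal does neither.
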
}

The definitions of $T_o$ and the metrics that we use show that
\begin{equation}\label{eq:size-of-basic-tile}
\begin{aligned}
T_o
\subset \lset(z,t) \in \HH^n: |z|_\infty \leq 1/2, |t| \leq 3/8 \rset
\subseteq \bar B(o, 1/2)\cdot \bar B^{*}(o, 1/8)
= \bar T(o,1/2,1/8),
\end{aligned}
\end{equation}
where the barred symbols indicate closures.
We note that $|T_o| = 1/2\cdim$ while $|T(o,1/2,1/8)| = 3/4$.

\begin{definition}\label{def:tiles}
We define
\[
\tile_0 := \{ g \cdot T_o : g \in \HH^n_{\Z} \},
\qquad
\tile_j := \delta_{(2\cdim+1)^j} \tile_0
\quad\text{and}\quad
\tile := \bigcup_{j \in \Z} \tile_j .
\]
We call the sets $T \in \tile$ \emph{tiles}.
If $j \in \Z$ and $g \in \HH^n_{\Z}$ and $T = \delta_{(2\cdim+1)^j} (g \cdot T_o)$, then $T = \delta_{(2\cdim+1)^j} (g) \cdot \delta_{(2\cdim+1)^j} (T_o)$, and we further define
\[
\cent(T) := \delta_{(2\cdim+1)^j} (g),
\qquad
\wid(T) := (2\cdim+1)^j
\quad\text{and}\quad
\heit(T) := \frac{(2\cdim+1)^{2j}}{2\cdim} \,.
\]
And we define $I_{j}$ be the $j$-th center set consisting of all the centers of $T\in \tile_j$. That is,
$$I_{j}=\{\cent(T):T\in\tile_{j}\}.$$
\end{definition}

\begin{lemma}[\cite{Ty,Str}]\label{thm:Heisenberg-grid}
Let $\tile_j$ and $\tile$ be defined as above.
Then the following hold:
\begin{enumerate}
  \item for each $j \in \Z$, $\tile_j$ is a partition of $\HH^n$, that is, $\HH^n = \bigcup_{T \in \tile_j} T$;
  \item $\tile$ is nested, that is, if $T, T' \in \tile$, then either $T$ and $T'$ are disjoint or one is a subset of the other;
  \item for each $j \in \Z$ and $T\in\tile_j$, $T$ is a union of $(2n+1)^{2n+2}$ disjoint congruent subtiles in $\tile_{j-1}$;
  \item $B(g, C_1 q) \subseteq T \subseteq B(g, C_2 q)$, where $g = \cent(T)$ and $q = \wid(T)$ for each $T \in \tile$; the constants $C_1$ and $C_2$ depend only on $\cdim$;
  \item if $T \in \tile_j$, then $g \cdot T \in \tile_j$ for all $g \in \delta_{(2\cdim+1)^j} \HH^n_{\Z}$, and $\delta_{(2\cdim+1)^k} T \in \tile_{j+k}$ for all $k \in \Z$.
\end{enumerate}
\end{lemma}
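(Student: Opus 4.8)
\emph{Strategy.} The whole statement follows from the self-similar tiling theorem of Strichartz--Tyson quoted above together with elementary bookkeeping with the group law \eqref{Hn multi law} and the dilations $\delta_\lambda$. The engine is the self-similarity identity $\delta_{2n+1}(T_o)=\bigcup_{g\in\Delta}g\cdot T_o$, which exhibits one member of $\tile_1$ as a union of $|\Delta|$ members of $\tile_0$; counting lattice points, $\Delta$ has $(2n+1)^{2n}$ choices of $z$-component and $(2n+1)^2$ choices of $t$-component, so $|\Delta|=(2n+1)^{2n+2}$. I would prove (1) first, deduce (3) by applying a dilation to the identity, deduce (2) by iterating (3) together with (1), prove (4) from a containment estimate for the base tile $T_o$ transported by left translations and dilations, and read off (5) from the facts that the $\delta_\lambda$ are automorphisms of $\HH^n$ and that $\HH^n_\Z$ is a subgroup.

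\emph{Parts (1), (3), (2).} To see $\tile_0$ is a partition, fix $(\zeta,\tau)\in\HH^n$: there is a unique $w\in\Z^{2n}$ with $z:=\zeta-w\in Q_0$ since the half-open cube $Q_0$ tiles $\R^{2n}$ under $\Z^{2n}$, and then $(\zeta,\tau)\in(w,s)\cdot T_o$ forces $\tau-s-2\,\Im\big(\textstyle\sum_j w_j\overline{z_j}\big)$ to lie in the half-open interval $[f(z)-\tfrac1{2n},f(z))$ of length exactly $\tfrac1{2n}$; as $s$ ranges over $(2n)^{-1}\Z$ this quantity sweeps a coset of $(2n)^{-1}\Z$, so exactly one $s$ works. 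Hence $\tile_0$ is a partition, the map $g\mapsto g\cdot T_o$ is injective on $\HH^n_\Z$, and distinct level-$0$ tiles are disjoint; applying the bijection $\delta_{(2n+1)^j}$ shows each $\tile_j$ is a partition. Applying $\delta_{(2n+1)^j}$ to the self-similarity identity, and using $\delta_\lambda(a\cdot S)=\delta_\lambda(a)\cdot\delta_\lambda(S)$, expresses every $T\in\tile_{j+1}$ as a union of $(2n+1)^{2n+2}$ congruent members of $\tile_j$, disjoint by (1); this is (3). Iterating, every member of $\tile_k$ with $k>j$ is a disjoint union of members of $\tile_j$, so for $T\in\tile_j$, $T'\in\tile_k$ with $j\le k$, the tile $T$ is either equal to or disjoint from each level-$j$ piece of $T'$ by (1), whence $T\subseteq T'$ or $T\cap T'=\emptyset$; the cases $j=k$ and $j>k$ are immediate or symmetric, giving (2).

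\emph{Parts (4), (5).} The crux of (4) is the base case $B(o,C_1)\subseteq T_o\subseteq B(o,C_2)$ with $C_1,C_2$ depending only on $n$. The upper inclusion is \eqref{eq:size-of-basic-tile}, since $\{|z|_\infty\le1/2,\ |t|\le3/8\}$ lies in a gauge ball $B(o,C_2)$. For the lower inclusion one cannot use continuity of $f$; instead the two-sided bound $\tfrac1{4n(n+1)}\le f(z)\le\tfrac{2n+1}{4n(n+1)}$ forces $f(z)-\tfrac1{2n}\le-\tfrac1{4n(n+1)}$ and $f(z)\ge\tfrac1{4n(n+1)}$ for every $z\in Q_0$, so the $t$-fiber of $T_o$ over each $z\in Q_0$ contains $(-\tfrac1{4n(n+1)},\tfrac1{4n(n+1)})$; hence $B(o,C_1)\subseteq T_o$ with $C_1=\tfrac1{2\sqrt{n(n+1)}}$. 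For a general $T=\delta_{(2n+1)^j}(g\cdot T_o)\in\tile_j$, $g\in\HH^n_\Z$, with $q=\wid(T)=(2n+1)^j$ and $\cent(T)=\delta_q(g)$, I would transport this using that left translation by $g$ is a gauge isometry ($g\cdot B(o,r)=B(g,r)$) and that $\delta_q$ scales the gauge distance by $q$ ($\delta_q(B(a,r))=B(\delta_q(a),qr)$), obtaining $B(\cent(T),C_1q)\subseteq T\subseteq B(\cent(T),C_2q)$. For (5), writing $T=\delta_{(2n+1)^j}(h\cdot T_o)$ and $g=\delta_{(2n+1)^j}(k)$ with $h,k\in\HH^n_\Z$, the homomorphism property of $\delta_\lambda$ gives $g\cdot T=\delta_{(2n+1)^j}((k\cdot h)\cdot T_o)$ with $k\cdot h\in\HH^n_\Z$, since $\HH^n_\Z$ is a subgroup (closed under \eqref{Hn multi law}, as $2\,\Im(\sum_j z_j\overline{z_j'})\in\Z\subseteq(2n)^{-1}\Z$ for Gaussian-integer coordinates, and under $(z,t)\mapsto(-z,-t)$); hence $g\cdot T\in\tile_j$, and $\delta_{(2n+1)^k}T=\delta_{(2n+1)^{j+k}}(h\cdot T_o)\in\tile_{j+k}$ by Definition \ref{def:tiles}.

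\emph{Main obstacle.} The only genuinely nontrivial point is the lower containment in (4): because $f$ is merely measurable, $T_o$ is a ``region under the graph of $f$'' with no interior structure to exploit geometrically. The resolution is arithmetic rather than geometric -- the normalization $f(0)=\tfrac1{2(n+1)}$, the fiber width $\tfrac1{2n}$, and the uniform bounds on $f$ together leave a fixed open slab $|t|<\tfrac1{4n(n+1)}$ inside every fiber. Everything else reduces to the self-similarity identity, the partition property, and the elementary facts that the dilations are automorphisms of $\HH^n$ and that $\HH^n_\Z$ is a subgroup.
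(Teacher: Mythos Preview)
Your argument is correct in all five parts. The paper itself does not prove this lemma---it is stated with attribution to Strichartz \cite{Str} and Tyson \cite{Ty} and used as a black box---so there is no in-paper proof to compare against; your write-up supplies exactly the standard verification (the partition from the half-open lattice structure of $Q_0\times(2n)^{-1}\Z$, the subtile count from $|\Delta|=(2n+1)^{2n}\cdot(2n+1)^2=(2n+1)^{2n+2}$, nestedness by iterating self-similarity, the ball containments from the uniform two-sided bounds on $f$, and the invariance from $\HH^n_\Z$ being a subgroup and each $\delta_\lambda$ an automorphism) that those references contain. One small remark: the subtile count can alternatively be read off from measure alone, since $|\delta_{2n+1}(T_o)|=(2n+1)^{2n+2}|T_o|$ and the union over $\Delta$ is disjoint by part~(1); this avoids having to count lattice points in $\Delta$ exactly, though your direct count is also correct.
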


Every tile is a dilate and translate of the basic tile $T_o$, so all have similar geometry.
Hence each tile in $\tile_j$ is a fractal set---its boundary is a set of Lebesgue measure $0$ and (Euclidean Hausdorff) dimension $2\cdim$---and is ``approximately'' a Heisenberg ball of radius $(2\cdim+1)^{j}$.
The decompositions are \emph{product-like} in the sense that the tiles project
onto cubes in the factor $\C^\cdim$, and their centers form a product set. If two tiles in $\tile_j$ are ``horizontal neighbors'', then the distance between their centers is $(2\cdim+1)^{j}$, while if they are ``vertical neighbors'', then the distance is $(2\cdim+1)^{2j}/2\cdim$.

\subsection{An Explicit Haar Basis on Heisenberg group}

Next we recall the explicit construction in \cite{KLPW} of a Haar basis. Note that in \cite{KLPW}, the Haar basis was constructed on a system of dyadic cubes for general metric space with a positive Borel measure. Here we apply it to the specific setting of Heisenberg group $\HH^n$ on the system of tiles.

There exists a Haar basis on $\HH^n$:
$\{h_{T}^{\epsilon}: T\in \tile, \epsilon = 1,\dots,M_n - 1\}$ for $L^p(\HH^n)$,
$1 < p < \infty$, where $M_n:=\# \mathfrak{H}(T)= (2n+1)^{2n+2}$ denotes the number of
sub-tiles of $T$ and $\mathfrak{H}(T)$ denotes the collection of sub-tiles of $T$.

\begin{lemma}[\cite{KLPW}]\label{thm:convergence}
For each $f\in
    L^p$, we have
    \[
        f(x)
        =  \sum_{T\in\tile}\sum_{\epsilon=1}^{M_n-1}
            \langle f,h^\epsilon_T\rangle h^\epsilon_T(x), 
    \]
    where the sum converges (unconditionally) both in the
    $L^p$-norm and pointwise almost everywhere.
 \end{lemma}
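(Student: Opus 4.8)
The plan is to realize the Haar expansion as a telescoping martingale adapted to the filtration $(\tile_j)_{j\in\Z}$. For each $j\in\Z$, let
\[
\mathcal E_j f:=\sum_{T\in\tile_j}\Big(\frac{1}{|T|}\int_T f\Big)\chi_T
\]
denote the conditional expectation onto functions constant on the tiles of generation $j$. By Lemma~\ref{thm:Heisenberg-grid}, the tiles are nested and each $T\in\tile_j$ is a disjoint union of $M_n=(2n+1)^{2n+2}$ congruent subtiles in $\tile_{j-1}$, so $(\mathcal E_j)_{j\in\Z}$ is a decreasing family of conditional expectations (a martingale structure, with $j\to-\infty$ the ``fine'' direction). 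On a fixed $T\in\tile_j$, the functions constant on its $M_n$ subtiles form an $M_n$-dimensional space containing the constants, and the Haar functions $\{h_T^\epsilon\}_{\epsilon=1}^{M_n-1}$ are constructed to be an orthonormal basis of its mean-zero subspace (each supported on $T$); hence
\[
\mathcal E_{j-1}f-\mathcal E_j f=\sum_{T\in\tile_j}\sum_{\epsilon=1}^{M_n-1}\langle f,h_T^\epsilon\rangle h_T^\epsilon .
\]

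First I would record the orthonormality of the global system $\{h_T^\epsilon:T\in\tile,\ \epsilon=1,\dots,M_n-1\}$: the same-tile relations hold by construction; if $T'\subsetneq T$ then $h_T^\epsilon$ is constant on the subtile of $T$ containing $T'$ while $h_{T'}^{\epsilon'}$ has mean zero there, so the inner product vanishes; and for disjoint tiles the supports are disjoint. Summing the identity above over $j_0\le j<J$ telescopes to $\mathcal E_{j_0-1}f-\mathcal E_J f$, so the Lemma reduces to the two limiting statements $\mathcal E_j f\to f$ as $j\to-\infty$ and $\mathcal E_j f\to 0$ as $j\to+\infty$, each in $L^p$ and pointwise a.e.

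For the first limit I would invoke the martingale convergence theorem for this filtration. By Doob's inequality the maximal operator $f\mapsto\sup_{j}|\mathcal E_j f|$ is bounded on $L^p$, $1<p<\infty$; a function measurable with respect to some $\tile_{j_1}$ satisfies $\mathcal E_j f=f$ for all $j\le j_1$, and such functions are dense in $L^p$ because tiles shrink to points --- here Lemma~\ref{thm:Heisenberg-grid}(4), that each tile is trapped between balls of comparable radius $\wid(T)$, is used --- so the standard approximation argument gives $\mathcal E_j f\to f$ in $L^p$ and a.e. For the second limit I would argue directly: it holds for bounded compactly supported $f$, since once $j$ is large enough that a single tile $T\in\tile_j$ contains $\supp f$ one has $|\mathcal E_jf|\le|T|^{-1}\|f\|_1$ with $|T|=\wid(T)^{2n+2}/(2n)\to\infty$; a three-$\varepsilon$ approximation (using that $\mathcal E_j$ is an $L^p$-contraction and that $\sup_j|\mathcal E_jg|$ is $L^p$-bounded) then extends it to all $f\in L^p$, $p<\infty$. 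Telescoping now yields $f=\sum_{T\in\tile}\sum_{\epsilon}\langle f,h_T^\epsilon\rangle h_T^\epsilon$ with convergence of the generation partial sums in $L^p$ and a.e.

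Finally, to upgrade to unconditional convergence in $L^p$, $1<p<\infty$, I would use the boundedness of Haar (martingale-transform) multipliers: for any bounded family $\{\theta_T^\epsilon\}$ the operator $f\mapsto\sum_{T,\epsilon}\theta_T^\epsilon\langle f,h_T^\epsilon\rangle h_T^\epsilon$ is bounded on $L^p$ with norm depending only on $p$ and $\sup_{T,\epsilon}|\theta_T^\epsilon|$ --- this is Burkholder's inequality, since after refining the filtration within each tile the Haar functions form a martingale difference sequence --- which is exactly unconditionality; together with density of finite sums (and the associated square-function estimate) it also gives a.e.\ convergence for arbitrary orderings. The step I expect to be the main obstacle is the pointwise a.e.\ statement, i.e.\ setting up the maximal inequalities for this particular tiling; the essential structural inputs are Lemma~\ref{thm:Heisenberg-grid}(3) (the splitting ratio $M_n$ is bounded, so consecutive $\mathcal E_j$ are comparable) and Lemma~\ref{thm:Heisenberg-grid}(4) (ball-comparability of tiles, so $\sup_j|\mathcal E_j f|$ is even dominated by the Hardy--Littlewood maximal operator on $\HH^n$). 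Everything else is a formal computation.
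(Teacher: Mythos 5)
The paper does not prove this lemma: it is quoted verbatim from the reference \cite{KLPW}, so there is no internal proof against which to compare. That said, your proposal reconstructs the argument that \cite{KLPW} (and essentially every proof of Haar-basis completeness on a space of homogeneous type) actually runs: identify the generation-$j$ block $\sum_{T\in\tile_j}\sum_\epsilon\langle f,h_T^\epsilon\rangle h_T^\epsilon$ with the martingale difference $\mathcal E_{j-1}f-\mathcal E_jf$ via Lemma~\ref{prop:HaarFuncProp}(vi), telescope, then prove $\mathcal E_jf\to f$ in the fine direction by Doob's maximal inequality plus density, $\mathcal E_jf\to 0$ in the coarse direction by testing on bounded compactly supported functions and interpolating with the contraction property, and finally upgrade to unconditionality by a martingale-transform (Burkholder) estimate after interpolating the filtration through the $M_n$ children of each tile. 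Two small points worth tightening if you write this up in full. First, the telescope $\sum_{j_0\le j<J}(\mathcal E_{j-1}f-\mathcal E_jf)=\mathcal E_{j_0-1}f-\mathcal E_{J-1}f$, not $\mathcal E_{j_0-1}f-\mathcal E_Jf$; harmless but worth fixing. Second, the a.e.\ \emph{unconditional} convergence is the genuinely delicate part: Burkholder gives unconditionality in $L^p$-norm directly, but for a.e.\ convergence along an arbitrary enumeration you need a square-function (or Men'shov--Rademacher-free, Haar-specific) argument together with the maximal inequality; your parenthetical mention of the square function is the right idea, but as written it is an assertion rather than a proof, and this is precisely where the cited reference does the nontrivial work. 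The structural inputs you flag from Lemma~\ref{thm:Heisenberg-grid} (bounded branching and ball-comparability of tiles) are exactly what makes the homogeneous-type machinery apply, so your diagnosis of what the tiling needs to provide is correct.
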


The following theorem collects several basic properties of the
functions $h_{T}^{\epsilon}$.

\begin{lemma}[\cite{KLPW}]\label{prop:HaarFuncProp}
    The Haar functions $h_{T}^{\epsilon}$, $T\in\tile$,
    $\epsilon = 1,\ldots,M_n - 1$, have the following properties:
    \begin{itemize}
        \item[(i)] $h_{T}^{\epsilon}$ is a simple Borel-measurable
            real function on $\HH^{n}$;
        \item[(ii)] $h_{T}^{\epsilon}$ is supported on $T$;
        \item[(iii)] $h_{T}^{\epsilon}$ is constant on each
            $R\in\mathcal{H}(T)$;
        \item[(iv)] $\int_T h_{T}^{\epsilon}(g)\, dg = 0$ (cancellation);
        \item[(v)] $\langle h_{T}^{\epsilon},h_T^{\epsilon'}\rangle = 0$ for
            $\epsilon \neq \epsilon'$, $\epsilon$, $\epsilon'\in\{1, \ldots, M_n - 1\}$;
        \item[(vi)] the collection
            $
                \big\{|T|^{-1/2}\chi_T\big\}
                \cup \{h_{T}^{\epsilon} : \epsilon = 1, \ldots, M_n - 1\}
            $
            is an orthogonal basis for the vector
            space~$V(T)$ of all functions on $T$ that is a constant on each sub-cube $R\in\mathfrak{H}(T)$;
        \item[(vii)] 
        if $h_{T}^{\epsilon}\not\equiv 0$ then
            $
                \|h_{T}^{\epsilon}\|_{p}
                \approx |T|^{\frac{1}{p} - \frac{1}{2}}
                \quad \text{for}~1 \leq p \leq \infty;
            $
        \item[(viii)] 

                $\|h_{T}^{\epsilon}\|_{1}\cdot
                \|h_{T}^{\epsilon}\|_{\infty} \approx 1$.
    \end{itemize}
\end{lemma}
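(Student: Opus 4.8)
The plan is to recall the explicit construction of the Haar system from \cite{KLPW}, specialized to the system of tiles $\tile$, and to read off properties (i)--(viii) from it. Fix a tile $T \in \tile$ and enumerate its children $\mathfrak{H}(T) = \{R_1, \dots, R_{M_n}\}$; by Lemma \ref{thm:Heisenberg-grid}(iii) these are $M_n = (2n+1)^{2n+2}$ pairwise disjoint \emph{congruent} subtiles, so $|R_i| = |T|/M_n$ for every $i$. Consider the finite-dimensional space $V(T)$ of functions on $T$ that are constant on each $R_i$; it has dimension $M_n$, contains $\chi_T$, and carries the $L^2(\HH^n)$ inner product. The subspace $W(T) := \{\phi \in V(T): \int_T \phi = 0\}$ is the orthogonal complement of $\mathbb{R}\chi_T$ in $V(T)$ and has dimension $M_n - 1$. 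Following \cite{KLPW}, one fixes an orthonormal basis $h_T^1, \dots, h_T^{M_n-1}$ of $W(T)$; a concrete choice is the ``telescoping'' basis in which $h_T^\epsilon$ is supported on $R_1 \cup \dots \cup R_{\epsilon+1}$, equal to a constant $a_\epsilon$ on $R_1 \cup \dots \cup R_\epsilon$ and to a constant $b_\epsilon$ on $R_{\epsilon+1}$, with $a_\epsilon,b_\epsilon$ determined by the mean-zero condition $\epsilon a_\epsilon + b_\epsilon = 0$ together with $L^2$-normalization.

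With this construction properties (i)--(vi) are immediate: $h_T^\epsilon$ is a finite linear combination of indicators of tiles, hence simple and Borel measurable; it is supported on $T$ and constant on each $R_i$; it has mean zero by the very definition of $W(T)$; distinct $h_T^\epsilon, h_T^{\epsilon'}$ are orthogonal because they belong to an orthonormal basis of $W(T)$; and $\{|T|^{-1/2}\chi_T\} \cup \{h_T^\epsilon\}_{\epsilon=1}^{M_n-1}$ is an orthonormal (in particular orthogonal) basis of $V(T)$ by a dimension count. For the norm bounds (vii), write $h_T^\epsilon = \sum_i c_i^\epsilon \chi_{R_i}$; cancellation gives $\sum_i c_i^\epsilon = 0$ and $L^2$-normalization gives $\sum_i |c_i^\epsilon|^2 = M_n/|T|$. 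Because all children have the same measure, the telescoping coefficients $c_i^\epsilon$ are, on their (between $2$ and $M_n$) nonzero positions, comparable to $|T|^{-1/2}$ with constants depending only on $M_n$, whence
\[
\|h_T^\epsilon\|_p^p = \frac{|T|}{M_n}\sum_i |c_i^\epsilon|^p \approx |T|\cdot |T|^{-p/2} = |T|^{1-p/2}, \qquad 1 \le p < \infty,
\]
and the case $p=\infty$ is the obvious $\|h_T^\epsilon\|_\infty \approx |T|^{-1/2}$; this is (vii), and taking $p=1$ and $p=\infty$ and multiplying yields (viii).

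Since the children of every tile are congruent, no $h_T^\epsilon$ is ever identically zero, so the hypothesis ``if $h_T^\epsilon\not\equiv 0$'' appearing in (vii) is automatic in this setting. The only point that requires care — and the reason \cite{KLPW} states (vii)--(viii) with that caveat — is that for a general system of dyadic sets with wildly unequal child measures the constants in the norm estimates may degenerate; in the Heisenberg tile system this cannot happen, and all implied constants depend only on $n$ through $M_n = (2n+1)^{2n+2}$. Thus the lemma is a direct specialization of the construction in \cite{KLPW}, with the congruence of subtiles from Lemma \ref{thm:Heisenberg-grid}(iii) supplying the uniformity of the estimates.
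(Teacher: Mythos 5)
Your proof is correct and, as far as I can tell, reproduces the substance of the [KLPW] construction specialized to the Heisenberg tile system; the paper itself gives no argument here (it simply cites [KLPW]), so a self-contained reconstruction is the right thing to do. Properties (i)--(vi) are indeed immediate from the structure of the mean-zero subspace $W(T) \subset V(T)$, and your derivation of (vii)--(viii) via the explicit telescoping coefficients is valid: with children of equal measure $|T|/M_n$, the nonzero coefficients $a_\epsilon$, $b_\epsilon$ lie in the range $[c_n|T|^{-1/2}, C_n|T|^{-1/2}]$ with $c_n, C_n$ depending only on $M_n = (2n+1)^{2n+2}$, which gives the $L^p$-norm equivalences with constants depending only on $n$. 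Your closing remark correctly identifies why (vii)--(viii) carry the caveat ``if $h_T^\epsilon \not\equiv 0$'' in the general [KLPW] framework (unequal or degenerate child measures can make some Haar functions trivial or make the constants blow up), and why this cannot happen here thanks to Lemma \ref{thm:Heisenberg-grid}(iii). The only thing I would flag is cosmetic: the exact telescoping orientation you chose (supporting $h_T^\epsilon$ on $R_1 \cup \dots \cup R_{\epsilon+1}$) need not literally coincide with the ordering used in [KLPW], but any such choice is an orthonormal basis of $W(T)$ and the norm computations are identical, so nothing of substance is affected.
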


\subsection{Characterization of Schatten class}

The Schatten norm is defined in a non-linear fashion. Estimating it above, and below, is not necessarily straight forward.  
Operators with kernels, such as commutators, admit general upper bounds in terms of norms on the kernels. 
These general facts are recalled, and used, in \S\ref{s:Sufficient}.   

Characterizations of Schatten norms for general operators are well known, and frequently expressed in terms of 
supremums, or infimums, over all choices of orthonormal bases for the Hilbert space in question.  

Rochberg and Semmes \cite{RS} proposed a notion of \emph{nearly weakly orthogonal  (NWO)} sequences of functions. 
This notion is closely connected to Carleson measures.  For our purposes, we do not need to recall the full definition 
of NWO sequences.  With the development of tiles in \S\ref{tilesection},  we have the inequality below, for any 
bounded compact operator $ A $ on $ L ^2 (\mathbb H ^{n})$: 
\begin{equation}\label{e:NWO}
\Bigl[
\sum_{T\in \tile } \lvert  \langle A e_T, f_T \rangle\rvert ^{p} 
\Bigr] ^{1/p} \lesssim \lVert A \rVert _{S ^{p}},
\end{equation}
where $\{e_T\}_T$ and $\{f_T\}_T$ are function sequences satisfying $ \lvert  e_T\rvert,  \lvert  f_T\rvert  \leq \lvert  T\rvert ^{-1/2} \chi_{T} $.  
This inequality can be found in  \cite[(1.10), \S3]{RS}.  

%
%
%
%
%

\section{Lower bound of the Riesz transform kernel on $\HH^n$}\label{Rieszsection}
\setcounter{equation}{0}

For any $\ell=1,2,\ldots,2n$, The the Riesz transform on Heisenberg groups $\HH^{n}$ is  given by $R_{\ell}=X_{\ell} (-\Delta_{\HH})^{-1/2}$.
It is well known that the heat kernel $p_h$ on $\HH^n$ has this 
form (cf. \cite{G77}): for $g=[z,t]\in \HH^n$,
\begin{eqnarray} \label{hkf}
p_h(g) = \frac{1}{2 (4 \pi h)^{n+1}} \int_{\R}
\exp{\Big(\frac{\lambda}{4 h} ( t\, \i  - | z |_{\mathbb C^n}^2 \coth{\lambda})
\Big)} \Big( \frac{\lambda}{\sinh{\lambda}} \Big)^n \, d\lambda,\quad \i^2=-1.
\end{eqnarray}
Moreover,  $p_h$ on $\HH^n$ satisfies (c.f. for example \cite[Equation (1.73)]{FoSt})
\begin{align} \label{hkp1}
p_h(g) = h^{-n-1} p(\delta_{\frac{1}{\sqrt{h}}}(g)), \qquad  \forall h > 0, \  g \in \HH^n.
\end{align}

The kernel of the $\ell^{{\rm th}}$  Riesz transform $R_\ell $ ($1 \leq \ell \leq  2n$) is written simply as $K_\ell(g)$. It is well-known that
$ K_\ell \in C^{\infty}(\HH^n \setminus \{o\})$, and it satisfies the scaling condition 
\begin{align} \label{kjs}
 \ K_\ell(\delta_r(g)) = r^{-2n-2} K_\ell(g), \quad \forall g \neq o, \ r > 0, \ 1 \leq \ell \leq  2n. 
\end{align}
Indeed, this follows from the relationship between the Riesz transform and heat kernel  \eqref{hkp1} given by 
\begin{align*}
K_\ell(g) = \frac{1}{\sqrt{\pi}} \int_0^{+\infty} h^{-\frac{1}{2}} X_\ell p_h(g) \, dh  = \frac{1}{\sqrt{\pi}} \int_0^{+\infty} h^{- n - 2} \left( X_\ell p \right)(\delta_{\frac{1}{\sqrt{h}}}(g)) \, dh.
\end{align*}
We  recall that
by the classical estimates for heat kernel and its derivations on stratified groups (see for example \cite{VSC}), 
it is well-known that (e.g. \cite{FoSt})\ for any multi-index $I=(i_1,\cdots,i_{2n})\in \mathbb{N}^{2n}$, $\forall 1 \leq \ell \leq 2n$, the Riesz transform kernel satisfies the following smoothness inequality:
\begin{align} \label{MEHT}
 |X^I K_\ell(g)| \lesssim \rho(g)^{-2n-2-|I|}.
\end{align}
We now establish the following fundamental result for the pointwise lower bound of the Riesz transform kernel, which is one of the key property for proving our main theorem. 
It is of independent interest, in that this property can be seen to hold for other  Calder\'on--Zygmund operators.

\begin{theorem}\label{nondegen}
There exists a positive integer $A_0$ such that:\ \  for each fixed $N\in\mathbb{N}\cup\{0\}$,

$\bullet$ for any $T\in \tile_{j}$, there is a unique $T_{N+A_0}\in \tile_{N+j+A_0}$ such that $T\subset T_{N+A_0}$.

$\bullet$ furthermore, for each $\ell\in \{1,2,\cdots,2n\}$, there exist positive constants $3\leq A_{1}\leq A_{2}$ and $C>0$ such that for any tile $T\in\tile_{j}$  and $N\in\mathbb{N}$, there exists a tile $\hat{T}\in\tile_{j}$ satisfying:
\begin{enumerate}
  \item $\hat{T}\subset T_{N+A_0}$;

  \item $A_{1}(2n+1)^{N+j}\leq d(\cent{(T)},\cent(\hat{T}))\leq A_{2}(2n+1)^{N+j}$;

  \item for all $(g,\hat g)\in T\times \hat{T}$, $K_{\ell}((\hat g)^{-1} g)$ does not change sign;

  \item for all $(g,\hat g)\in T\times \hat{T}$, $|K_{\ell}((\hat g)^{-1} g)|\geq C (2n+1)^{-(2n+2)(N+j)}$.
\end{enumerate}
\end{theorem}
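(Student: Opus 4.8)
The plan is to build $\hat T$ by exploiting the scaling law \eqref{kjs} together with the smoothness estimate \eqref{MEHT}, reducing everything to a single statement about the kernel $K_\ell$ on a fixed annulus. First I would fix the scale: by homogeneity it suffices to treat $j=0$, since if $T\in\tile_j$ then $\delta_{(2n+1)^{-j}}T\in\tile_0$, the centers scale by $(2n+1)^{-j}$, and $K_\ell(\delta_r g)=r^{-2n-2}K_\ell(g)$ turns the conclusion for scale $j$ into the conclusion for scale $0$ with the same constants $A_1,A_2,C$. So from now on $T\in\tile_0$ with $\cent(T)=g_0$ and $\wid(T)=1$. The first bullet (existence and uniqueness of $T_{N+A_0}$) is immediate from the nestedness in Lemma~\ref{thm:Heisenberg-grid}(2) and the fact that $\tile_{N+A_0}$ partitions $\HH^n$; I would choose $A_0$ large enough (depending only on $n$, via the constants $C_1,C_2$ of Lemma~\ref{thm:Heisenberg-grid}(4)) that $T_{N+A_0}$ not only contains $T$ but also contains a full Heisenberg ball $B(g_0,c(2n+1)^{N})$ for a dimensional constant $c>0$; this is what lets us place $\hat T$ inside $T_{N+A_0}$ while keeping it at distance $\approx(2n+1)^N$ from $T$.

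Next I would locate a good direction. The key input is the qualitative non-degeneracy of $K_\ell$ already in the literature (the pointwise lower bound on stratified groups from \cite{DLLW} that the authors cite): $K_\ell$ is smooth away from the origin, $(-2n-2)$-homogeneous, and not identically zero, so $K_\ell$ cannot vanish on the whole unit sphere $\{\rho(g)=1\}$; in fact one extracts a point $\omega_\ell$ with $\rho(\omega_\ell)=1$ and $|K_\ell(\omega_\ell)|=:2\kappa_\ell>0$. By continuity there is a small radius $\eta_\ell\in(0,1)$ such that $|K_\ell(g)|\geq\kappa_\ell$ and $K_\ell$ has constant sign on the ball $B(\omega_\ell,\eta_\ell)$. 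Now for the given $N$, consider the point $p_N:=g_0\cdot\delta_{(2n+1)^N}(\omega_\ell)$ — this is $g_0$ translated by roughly $(2n+1)^N$ in the "direction" $\omega_\ell$. I would let $\hat T$ be the unique tile in $\tile_0$ containing $p_N$ (or a tile whose center is within bounded distance of $p_N$). Using Lemma~\ref{thm:Heisenberg-grid}(4), $\hat T\subset B(\cent(\hat T),C_2)$ and $\cent(\hat T)$ is within $O(1)$ of $p_N$; since $(2n+1)^N$ is large once $A_1$ is chosen $\geq 3$ and $A_2$ suitably larger, the triangle inequality for $d$ gives $A_1(2n+1)^N\le d(g_0,\cent(\hat T))\le A_2(2n+1)^N$, which is (2); and since this distance is $\lesssim(2n+1)^N\ll c(2n+1)^{N+A_0}$ for $A_0$ large, $\hat T\subset T_{N+A_0}$, which is (1).

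For (3) and (4) I would estimate $K_\ell((\hat g)^{-1}g)$ for $(g,\hat g)\in T\times\hat T$. Write $(\hat g)^{-1}g=w\cdot\delta_{(2n+1)^N}(\omega_\ell)\cdot v$ where $w,v$ are "error" group elements coming from $g$ ranging over $T$ (so $\rho(g_0^{-1}g)=O(1)$) and $\hat g$ ranging over $\hat T$ (so $\rho(p_N^{-1}\hat g)=O(1)$); thus $(\hat g)^{-1}g = \delta_{(2n+1)^N}(\omega_\ell)\cdot(\text{element of $\rho$-norm }O(1))$, i.e. after applying $\delta_{(2n+1)^{-N}}$ we land in a fixed $O((2n+1)^{-N})$-neighborhood of $\omega_\ell$. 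For $N$ large this neighborhood sits inside $B(\omega_\ell,\eta_\ell)$, where $K_\ell$ has constant sign and magnitude $\geq\kappa_\ell$; by the homogeneity \eqref{kjs} this transfers to $|K_\ell((\hat g)^{-1}g)|\geq\kappa_\ell(2n+1)^{-(2n+2)N}$ with constant sign, which is exactly (3) and (4) with $C=\kappa_\ell$ (and the factor $(2n+1)^{-(2n+2)j}$ reinstated by undoing the $j=0$ reduction). For the finitely many small values of $N$ not covered by the "$N$ large" argument, I would handle them by hand: there are only boundedly many, and for each one a direct compactness/continuity argument on the (finitely many relevant) pairs of tiles produces an admissible $\hat T$; then enlarge $A_2$ and shrink $C$ to absorb this finite exceptional set.

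The main obstacle is the passage from $g,\hat g$ ranging over the tiles $T,\hat T$ — which are fractal, not balls, and are being multiplied in the non-abelian group — to a clean containment in a Euclidean-looking neighborhood of $\omega_\ell$ after dilation. Concretely, one must check that the group-theoretic "errors" $w$ and $v$ really do have $\rho$-norm $O(1)$ uniformly (using Lemma~\ref{thm:Heisenberg-grid}(4) and the quasi-triangle inequality for $d\approx\rho$), and that left-multiplying $\delta_{(2n+1)^N}(\omega_\ell)$ by an $O(1)$-element, then applying $\delta_{(2n+1)^{-N}}$, genuinely yields an element within $O((2n+1)^{-N})$ of $\omega_\ell$ in $\rho$ — this uses the interaction of dilations with the group law and the fact that conjugation by $\delta_{(2n+1)^N}$ contracts the $O(1)$-error. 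Once that uniform "the difference lives near $\omega_\ell$" statement is nailed down, properties (3) and (4) follow from the qualitative non-vanishing of $K_\ell$ plus \eqref{kjs}, and the geometric bookkeeping for (1) and (2) is routine.
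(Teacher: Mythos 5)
Your reduction to $j=0$ by scaling and your use of the qualitative non-vanishing of $K_\ell$ on the unit sphere are both sound, and the dilation/group-law bookkeeping you flag as the ``main obstacle'' is indeed routine in the sense you describe. However, there is a genuine gap earlier, and it is precisely the point your write-up dismisses as ``routine geometric bookkeeping'': ensuring that $\hat{T}\subset T_{N+A_0}$.

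You fix a \emph{single} direction $\omega_\ell$ on the unit sphere where $|K_\ell|$ is large, and propose to place $\hat T$ near $p_N=g_0\cdot\delta_{(2n+1)^N}(\omega_\ell)$. But $T_{N+A_0}$ is the \emph{unique} ancestor of $T$ in $\tile_{N+j+A_0}$, and $g_0=\cent(T)$ may lie arbitrarily close to the boundary of $T_{N+A_0}$ (relative to the scale $(2n+1)^N$). In that case $p_N$ can simply fall outside $T_{N+A_0}$, and no choice of a fixed $A_0$ repairs this. Your intermediate claim --- that for $A_0$ large one has $B(g_0,c(2n+1)^N)\subset T_{N+A_0}$ --- is false: the tiles project onto cubes in $\C^n$, and if $T$ sits at a corner of the projected cube of $T_{N+A_0}$, then the largest ball about $g_0$ contained in $T_{N+A_0}$ has radius $O(\wid(T))$, not $O((2n+1)^N)$. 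Lemma~\ref{thm:Heisenberg-grid}(4) gives a ball about $\cent(T_{N+A_0})$, not about $\cent(T)$, and those two centers can be $\approx C_2(2n+1)^{N+A_0}$ apart, which is \emph{larger} than the inner radius $C_1(2n+1)^{N+A_0}$.

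This is exactly why the paper's proof cannot fix one direction and instead runs a measure-theoretic argument on the sphere of radius $\mathfrak C(2n+1)^{N+j+A_0}$ about $\cent(T)$: (a) the set of directions for which the target point lies inside $T_{N+A_0}$ with a safety margin occupies a definite proportion $\mathfrak D>0$ of that sphere (by self-similarity of the tiling), while (b) by \cite{DLLWW} the zero set of $K_\ell$ on the unit sphere has measure zero, hence is covered by an open set of measure $<\mathfrak D$; intersecting (a) with the complement of (b) produces an admissible direction $\tilde g_\ell$ \emph{depending on} $T$ and $N$, with a uniform lower bound on $|K_\ell(\tilde g_\ell)|$ coming from compactness of the complement. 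Note this also forces $d(\cent(T),\cent(\hat T))\approx(2n+1)^{N+j+A_0}$ (rather than $(2n+1)^{N+j}$, as in your proposal), which is why $A_1,A_2$ in the statement absorb a factor $(2n+1)^{A_0}$. To repair your argument you would need to replace the fixed $\omega_\ell$ by a direction chosen per tile, and to justify its existence you would need essentially the same density-of-good-directions argument the paper uses; the rest of your outline (homogeneity, continuity near the chosen direction, and the absorption of the $O(1)$ group errors by the dilation) would then go through.
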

\begin{proof}
Begin with  this fundamental fact of the Riesz transform kernel from \cite[Theorem 1.5]{DLLWW}:  
$$ K_\ell(g)\not=0\qquad {\rm a.e.}\ g\in \mathbb H^n , {\rm\ \ for\ each\ fixed\ } \ell\in\{1,2,\ldots,2n\}. $$
From the scaling property of $K_\ell$ (c.f. \eqref{kjs}) and the property above, we obtain that
$$ \quad K_\ell( g)\not=0\qquad {\rm a.e.}\ \  g\in \mathbb S^n,  $$
where $\mathbb S^n=\{g\in \HH^n:\ \rho( g)=1\}$ is the unit sphere in $\HH^n$.
Let $E_\ell:=\{ g\in \mathbb S^n:\   K_\ell( g)=0 \}$. Then $\sigma(E_\ell)=0$, where $\sigma$ represents the surface measure,  and for every small positive number
$\epsilon$, there exists an open set $\mathcal E_\ell$ covering $E_\ell$ such that $\sigma(\mathcal E_\ell)<\epsilon$.
Since $K_\ell$ is a $C^\infty$ function in $\HH^n \backslash \{o\}$, there exists $g_\ell$ in $\HH^n$ with $\rho(g_
\ell)=1$ such that
$$ |K_\ell(g_\ell)|=\min_{g\in \mathcal F_\ell } |K_\ell(g) |>0, $$
where $\mathcal F_\ell:=\mathbb S^n\backslash \mathcal E_\ell$.

Hence, there exists $ 0<\varepsilon_o\ll1$ such that
\begin{align}\label{non zero e1}
 |K_\ell( g)|>{1\over 2} |K_\ell( g_\ell)|
\end{align}
for all $g\in B(\mathcal F_\ell, 4\varepsilon_o) = \{g\in\HH^n: \exists \tilde g\in \mathcal F_\ell {\rm\ \ such \ that} \ \ d(g,\tilde g)<4\varepsilon_o   \}$.

\smallskip 
We now turn to the tiles.  Based on the construction of tiles,  for every $T\in \tile_{j}$ and for each fixed $N\in\mathbb{N}$, there exists a unique $T_{N+A_0}\in 
\tile_{N+j+A_0}$ such that $T\subset T_{N+A_0}$. Here $A_0$ is a  positive integer to be determined later.
We now fix $N\in\mathbb N$ and choose an arbitrary $T\in \tile_{j}$.


We first {\bf claim} that for the chosen $T\in \tile_{j}$ and the unique tile $T_{N+A_0}\in \tile_{N+j+A_0}$ with $T\subset T_{N+A_0}$, there must be some $\hat g \in T_{N+A_0}$ with
$d(h, \hat g) = \mathfrak C (2n+1)^{N+j+A_0}$ and
$d(\hat g, T_{N+A_0}^c) >10C_2(2n+1)^j$
such that
\begin{align}\label{ee claim}
(\delta_{\mathfrak C^{-1}(2n+1)^{-N-j-A_0}} (h^{-1} \hat g) )^{-1}\in \mathcal F_\ell,
\end{align}
where $h= \cent{(T)}$, $\mathfrak C$ is a positive constant such that ${ C_1\over2}<\mathfrak C < {3 C_1\over4}$,  $C_1$ and $C_2$ are the constants in Lemma \ref{thm:Heisenberg-grid}.


We now prove this claim. Suppose that for all $\hat g \in T_{N+A_0}$ with
$d(h, \hat g) = \mathfrak C (2n+1)^{N+j+A_0}$ and
$d(\hat g, T_{N+A_0}^c) >10C_2(2n+1)^j$, \eqref{ee claim} does not hold. Then since $\rho((\delta_{\mathfrak C^{-1}(2n+1)^{-N-j-A_0}} (h^{-1} \hat g))^{-1})=1$,
 we obtain
that $(\delta_{\mathfrak C^{-1}(2n+1)^{-N-j-A_0}} (h^{-1} \hat g))^{-1} \in \mathcal S_\ell.$ However, due to the construction of the system of tiles, we obtain that
$$ {\ \sigma( \{\hat g \in T_{N+A_0}:\
d(h, \hat g) = \mathfrak C (2n+1)^{N+j+A_0},\ d(\hat g, T_{N+A_0}^c) >10C_2(2n+1)^j\} )\ \over \sigma(\{\hat g \in \HH^n:\
d(h, \hat g) = \mathfrak C (2n+1)^{N+j+A_0}\})} > \mathfrak D>0, $$
where $ \mathfrak D\in(0,1)$ is a constant depending on $n$, $N$ and $A_0$ only, but independent of $ j$ and $T$. This contradicts to the fact that $\sigma(\mathcal S_\ell)<\epsilon$ for any  small positive $\epsilon$ given at the beginning. Thus, the claim holds.

Now based on the claim, we choose $\hat h \in T_{N+A_0}$ with
$d(h, \hat h) = \mathfrak C (2n+1)^{N+j+A_0}$ and
$d(\hat h, T_{N+A_0}^c) >10C_2(2n+1)^j$ such that $(\delta_{\mathfrak C^{-1}(2n+1)^{-N-j-A_0}} (h^{-1} \hat h))^{-1} \in \mathcal F_\ell$. Let $\tilde g_\ell :=(\delta_{\mathfrak C^{-1}(2n+1)^{-N-j-A_0}} (h^{-1} \hat h))^{-1} $.
Without lost of generality, we assume that $K_\ell(\tilde g_\ell)$ is positive.

From the definition of $\tilde g_\ell $ we see that
\begin{align}\label{g*}
 \hat h= h \cdot \delta_{\mathfrak C (2n+1)^{N+j+A_0}}( \tilde g_\ell^{-1} ).
\end{align}
%
%
%
%
%
Next, we choose the integer $A_0$ so that $(2n+1)^{N+A_0}> 5C_2\mathfrak C^{-1} \varepsilon_0^{-1}$. Then fix some $\eta\in(0, 2 \varepsilon_o)$ such that
 the two balls $B(h,  \eta r)$ and $B( \hat h,  \eta r) $ with $r=\mathfrak C (2n+1)^{N+j+A_0}$ satisfy the following condition:
$$5 C_2(2n+1)^j< \eta r< 10 C_2(2n+1)^j.$$
Then we can deduce that $T\subset B(h,  \eta r)$ and $B( \hat h,  \eta r) \subset T_{N+A_0}$.

 It is direct that
 for every $g\in B(h,  \eta r)$, we can write
 $$ g =h\cdot \delta_r (g'_1) $$
 where $g'_1 \in B(o,  \eta)$.
Similarly, for every $\hat{g}\in B( \hat h,  \eta r)$, we can write
 $$ \hat{g} = \hat h\cdot \delta_r (g'_2) $$
 where $g'_2 \in B(o,  \eta)$.

As a consequence, we have
\begin{align}\label{dilation}
K_\ell(g,\hat{g}) &= K_\ell\big( h\cdot \delta_r (g'_1) ,   \hat h\cdot \delta_r (g'_2)   \big)\\
&= K_\ell\big(   h\cdot \delta_r (g'_1) ,   h \cdot \delta_r( \tilde g_\ell^{-1} )\cdot \delta_r (g'_2)   \big)\nonumber\\
&= K_\ell\big(    \delta_r (g'_1) ,     \delta_r( \tilde g_\ell^{-1} )\cdot \delta_r (g'_2)   \big)\nonumber\\
&= K_\ell\big(    \delta_r (g'_1) ,     \delta_r( \tilde g_\ell^{-1} \cdot g'_2)   \big)\nonumber\\
&= r^{-2n-2} K_\ell\big(    g'_1,     \tilde g_\ell^{-1} \cdot g'_2  \big)\nonumber\\
&= r^{-2n-2} K_\ell\big(    (g'_2)^{-1} \cdot    \tilde g_\ell \cdot g'_1  \big),\nonumber
\end{align}
where the second equality comes from \eqref{g*}, the third comes from the property of the left-invariance
and the fifth comes from \eqref{kjs}.

Next, we note that
\begin{align*}
d\big(    (g'_2)^{-1} \cdot    \tilde g_\ell \cdot g'_1, \tilde g_\ell \big) &= d\big(      \tilde g_\ell \cdot g'_1, g'_2 \cdot  \tilde g_\ell \big)\\
&\leq  \, \left[ d\big(      \tilde g_\ell \cdot g'_1,   \tilde g_\ell \big)+ d\big(      \tilde g_\ell , g'_2 \cdot  \tilde g_\ell \big) \right]\\
&=  \, \left[  d\big(  g'_1,   o \big)+ d\big( o, g'_2 \big) \right]\\
&\leq 2  \eta\\
&<4  \varepsilon_o,
\end{align*}
which shows that $ (g'_2)^{-1} \cdot    \tilde g_\ell \cdot g'_1$ is contained in the ball $B(\tilde g_\ell, 4 \varepsilon_o)$
for all $g'_1 \in B(o,  \eta)$ and for all $g'_2 \in B(o,  \eta)$.

Thus, from  \eqref{non zero e1}, we obtain that
\begin{align}\label{lower bound e1}
| K_\ell\big(    (g'_2)^{-1} \cdot    \tilde g_\ell\cdot g'_1  \big)| > {1\over 2 } | K_\ell(\tilde g_\ell)|
\end{align}
and for all $g'_1 \in B(o,  \eta)$ and for all $g'_2 \in B(o,  \eta)$,
$K_\ell\big(    (g'_2)^{-1} \cdot    \tilde g_\ell \cdot g'_1  \big)$ and $K_\ell(\tilde g_\ell)$ have the same sign.

Now
combining the equality \eqref{dilation} and   \eqref{lower bound e1} above, we obtain that
\begin{align}\label{lower bound e2}
|K_\ell(g,\hat{g})|   > {1\over 2 } r^{-2n-2} |K_\ell(\tilde g_\ell)|
\end{align}
for every $g\in B(h,  \eta r)$ and for every $\hat g\in B( \hat h,  \eta r)$, where $K_\ell(g,\hat{g})$ and $K_\ell(\tilde g_\ell)$ have the same sign. Here $K_\ell(\tilde g_\ell)$ is a fixed
constant independent of $\eta$, $r$, $h$, $g_1$ and $g_2$. We denote
$$ C(\ell,n)= {1\over 2}|K_\ell(\tilde g_\ell)|.$$

From the lower bound \eqref{lower bound e2} above, we further obtain that
for the suitable $\eta\in (0,\varepsilon_o)$,
\begin{align}\label{lower bound e3}
|K_\ell(g,\hat{g})|   > C(\ell,n) r^{-2n-2}
\end{align}
for every $g\in B(h,  \eta r)$ and for every $\hat{g}\in B( \hat h,  \eta r)$. Moreover,
the sign of $K_\ell (g,\hat{g})$ is invariant
for every $g\in B(h,  \eta r)$ and for every $\hat{g}\in B( \hat h,  \eta r)$.

Based on the fact that $B( \hat h,  \eta r)\subset T_{N+A_0}$ and $\eta r> 5 C_2(2n+1)^j $, there must be some tile $\hat T \in \tile_{j}$ such that $\hat{T}\subset B( \hat h,  \eta r)$. Also note that $T\subset B( h,  \eta r)$. Hence we obtain that  $A_{1}(2n+1)^{N+j}\leq d(\cent{(T)},\cent(\hat{T}))\leq A_{2}(2n+1)^{N+j}$, where $A_1$ and $A_2$ depends only on $A_0$ and $\mathfrak C$. Moreover, we see that
  for all $(g,\hat g)\in T\times \hat{T}$, $K_{\ell}((\hat g)^{-1} g)$ does not change sign and that
   for all $(g,\hat g)\in T\times \hat{T}$, $|K_{\ell}((\hat g)^{-1} g)|\gtrsim  (2n+1)^{-(2n+2)(N+j)}$, where the implicit constant depends on $C(\ell,n)$ and $A_0$.

The proof of Theorem \ref{nondegen} is complete.
\end{proof}

\section{Theorem \ref{schatten}:   $2n+2<p<\infty$}\label{three}
\setcounter{equation}{0}

\subsection{Proof of the necessary condition}
In this subsection, we assume that $[b,R_{\ell}]\in S^p$ for some $2n+2<p<\infty$ and then prove that $b\in B_{p,p}^{\frac{2n+2}{p}}(\HH^{n})$.

We need these preliminary observations.  
Let $\tile_{k}$ be the decomposition of $\HH^{n}$ into tiles $T$ as in Section \ref{tilesection}. We define the conditional expectation of a locally integrable function $f$ on $\HH^{n}$ with respect to the increasing family of $\sigma-$algebras $\sigma(\tile_{-k})$ by the expression: $$E_{k}(f)(g)=\sum_{T\in\tile_{-k}}(f)_{T}\chi_{T}(g),\ g\in\HH^n.$$
where we denote $(f)_{T}$ be the average of $f$ over $T$, that is, $(f)_{T}:=\fint_{T}f(g)dg:=\frac{1}{|T|}\int_{T}f(g)dg$.

For $T\in\tile_{k}$, we let $h_{T}^{1}$, $h_{T}^{2},\ldots, h_{T}^{M_n-1}$ be a family of Haar functions defined in Lemma \ref{prop:HaarFuncProp}. Next, we choose $h_{T}$ among these functions such that
$$h_{T}=\left\{h_{T}^{\epsilon}:\left|\int_{T}f(g)h_{T}^{\epsilon}(g)\,dg\right|{\rm \ is\ maximal\ with\ respect\ to\ }\epsilon=1,2,\ldots,M_{n}-1 \right\}.$$

Note that the function $(E_{k+1}(f)(g)-E_{k}(f)(g))\chi_T(g)$ is a sum of $M_n$ Haar functions. That is, we are in a finite dimensional setting and all $L^p$-spaces have comparable norms. So
we have that
\begin{align}\label{tttt1}
\left(\fint_{T}|E_{k+1}(f)(g)-E_{k}(f)(g)|^{p}\,dg\right)^{1/p}
&\leq C  |T|^{-1/2}\left|\int_{T}f(g)h_{T}(g)\,dg\right|,
\end{align}
where $C$ is a constant only depending on $p$ and $n$.

This is the main Lemma.  
\begin{lemma}\label{step1}
Let $1< p<\infty$ and suppose that $b$ is a locally integrable function satisfying $\|[b,R_{\ell}]\|_{S^p}<\infty$, then there exists a constant $C>0$ such that for any $k\in\mathbb{Z}$,
\begin{align} \label{e:step1}
\sum_{k}(2n+1)^{(2n+2)k}\| E _{k+1}(b) -E_{k}(b)\|_{p} ^{p} \lesssim \|[b,R_{\ell}]\|_{S^p} ^{p}
\end{align}
\end{lemma}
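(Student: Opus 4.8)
The plan is to use the NWO inequality \eqref{e:NWO} together with the non-degeneracy of the Riesz kernel (Theorem~\ref{nondegen}) to convert the Schatten bound on $[b,R_\ell]$ into a bound on the Haar coefficients of $b$. Fix a scale $k$ and a tile $T\in\tile_{-k}$. By \eqref{tttt1}, it suffices to control $\sum_{T\in\tile_{-k}}\bigl|\langle b,h_T\rangle\bigr|^p$ by $\|[b,R_\ell]\|_{S^p}^p$, summed over $k$. The idea is to test the commutator $[b,R_\ell]$ against pairs of normalized bumps supported on $T$ and on the ``partner tile'' $\hat T\in\tile_{-k}$ produced by Theorem~\ref{nondegen} (with an appropriate fixed choice of $N$, say $N=1$). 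Concretely, I would take $e_T$ proportional to $h_T$ (so $|e_T|\le |T|^{-1/2}\chi_T$ after normalizing, since $\|h_T\|_\infty\approx|T|^{-1/2}$ on its support by Lemma~\ref{prop:HaarFuncProp}(vii)--(viii)) and $f_{\hat T} = |\hat T|^{-1/2}\chi_{\hat T}$.

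The core computation is the lower bound for $\langle [b,R_\ell] e_T, f_{\hat T}\rangle$. Since $T$ and $\hat T$ are disjoint,
\[
\langle [b,R_\ell] e_T, f_{\hat T}\rangle
= \int_{\hat T}\int_T \bigl(b(\hat g)-b(g)\bigr)\,K_\ell\bigl((\hat g)^{-1}g\bigr)\, e_T(g)\,\overline{f_{\hat T}(\hat g)}\;dg\,d\hat g .
\]
Write $b(\hat g)-b(g) = (b(\hat g)-c) - (b(g)-c)$ where $c$ is the median (or a suitable constant, e.g.\ the average of $b$ on a tile containing both $T$ and $\hat T$, namely $T_{N+A_0}$). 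The term involving $b(\hat g)-c$ integrates against $\int_T e_T = 0$ by the cancellation of $h_T$ (Lemma~\ref{prop:HaarFuncProp}(iv)), provided we freeze $K_\ell$; but $K_\ell$ is not constant on $T$, so one must instead exploit that $K_\ell((\hat g)^{-1}g)$ does not change sign and is $\gtrsim \wid(T)^{-(2n+2)}$ in absolute value on $T\times\hat T$ (parts (3)--(4) of Theorem~\ref{nondegen}), while its oscillation in $g$ across $T$ is controlled by the gradient estimate \eqref{MEHT}, which is a factor $\wid(T)/d(\cent(T),\cent(\hat T))\approx (2n+1)^{-N}$ smaller than its size. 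Choosing $N$ a fixed large constant makes the oscillation term a small multiple of the main term, so it can be absorbed. The remaining main term is $\bigl(\int_T (b(g)-c)\,e_T(g)\,dg\bigr)$ times a quantity of size $\approx \wid(T)^{-(2n+2)}|T||\hat T|^{1/2}|T|^{-1/2}\approx \wid(T)^{-(2n+2)}|T|\approx (2n+1)^{-(2n+2)k}\cdot(2n+1)^{-(2n+2)k}$... more precisely, using $|T|\approx\wid(T)^{2n+2}=(2n+1)^{-(2n+2)k}$ one gets a clean factor $(2n+1)^{(2n+2)k/2}$ (or its reciprocal) matching the weight in \eqref{e:step1}. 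Replacing $c$ by the median lets one also absorb the $b(\hat g)-c$ contribution, since testing instead against a Haar function adapted to $\hat T$ controls that piece symmetrically. The upshot is a pointwise-in-$T$ lower bound
\[
(2n+1)^{(n+1)k}\,\bigl|\langle b,h_T\rangle\bigr| \;\lesssim\; \bigl|\langle [b,R_\ell] e_T, f_{\hat T}\rangle\bigr| + (\text{absorbable errors}).
\]

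Summing the $p$-th powers over all $T\in\tile_{-k}$ and all $k\in\ZZ$, the left side becomes (up to \eqref{tttt1}) the left side of \eqref{e:step1}. For the right side I need $\{e_T\}$ and $\{f_{\hat T}\}$, now indexed over all tiles at all scales, to be admissible for \eqref{e:NWO}, i.e.\ each dominated by $|T|^{-1/2}\chi_T$; the $e_T$ are fine by construction, and the $f_{\hat T}$ are indicator-type bumps on distinct tiles $\hat T$, but one must check that the map $T\mapsto\hat T$ has bounded multiplicity (each $\hat T$ is the partner of only boundedly many $T$'s at the same scale) so that after reindexing one still has at most a bounded-overlap family — a point handled by Theorem~\ref{nondegen}(2), which pins $d(\cent(T),\cent(\hat T))$ to a fixed dyadic annulus around $T$. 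Then \eqref{e:NWO} gives $\sum_{T}\bigl|\langle[b,R_\ell]e_T,f_{\hat T}\rangle\bigr|^p\lesssim\|[b,R_\ell]\|_{S^p}^p$, completing the proof.

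The main obstacle I anticipate is the bookkeeping around the constant $c$ and the oscillation of the kernel: one must simultaneously (i) use cancellation of $h_T$ against a \emph{non-constant} kernel — handled by splitting $K_\ell$ into its value at a reference point plus a controlled-error piece via \eqref{MEHT} and a fixed large $N$ — and (ii) dispose of the $b(\hat g)-c$ term, which requires the right choice of $c$ (the median of $b$ over $T_{N+A_0}$) together with a symmetric testing argument on $\hat T$, or a covering/telescoping over the intermediate tiles. Keeping all the powers of $(2n+1)$ aligned so that the final weight is exactly $(2n+1)^{(2n+2)k}$ is delicate but mechanical once the scheme above is set up.
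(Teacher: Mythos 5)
Your high-level plan (use the NWO inequality \eqref{e:NWO} after pairing each tile $T$ with a partner $\hat T$ from Theorem~\ref{nondegen}) is the same as the paper's, but the mechanism you propose for extracting a Haar coefficient from the commutator pairing has a genuine gap. You take $e_T\propto h_T$, $f_{\hat T}=|\hat T|^{-1/2}\chi_{\hat T}$, write $b(\hat g)-b(g)=(b(\hat g)-c)-(b(g)-c)$, freeze $K_\ell$ at a reference value $K_0$, and try to absorb the kernel-oscillation error by taking $N$ large. The problem is what the error term actually depends on. Expanding, the main term is $\approx K_0\,|T|^{1/2}\,\langle b,h_T\rangle$, but the oscillation error is bounded by $\operatorname{osc}(K_\ell)\bigl(\int_T|b-c|+\int_{\hat T}|b-c|\bigr)$. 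Taking $N$ large does give $\operatorname{osc}(K_\ell)\approx(2n+1)^{-N}|K_0|$, so the error is a small multiple of $|K_0|\int_T|b-c|$ --- but that is \emph{not} a small multiple of $|K_0|\,|T|^{1/2}|\langle b,h_T\rangle|$. One can have $\langle b,h_T\rangle=0$ while $\int_T|b-c|$ is enormous (e.g.\ if $b$ oscillates at scales well below $\wid(T)$), so the error is not absorbable into the target quantity, and no fixed choice of $N$ fixes this. (Symmetrizing by also using a Haar function on $\hat T$ does not help either: then \emph{both} frozen-kernel main terms vanish by cancellation, leaving no main term at all.) Also, for \eqref{e:NWO} both $e_T$ and its partner must be dominated by $|S|^{-1/2}\chi_S$ for a \emph{common} tile $S$; you would still need to re-index by the ancestor $T_{N+A_0}$ containing both $T$ and $\hat T$, which you do not do.

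The paper's proof avoids all of this by never attempting to cancel the $b(\hat g)$ contribution against the kernel's near-constancy. Instead it bounds $|T|^{-1/2}|\langle b,h_T\rangle|$ from the start by $\frac1{|T|}\int_T|b-\alpha_{\hat T}(b)|$ (using the cancellation of $h_T$ once, with $c=\alpha_{\hat T}(b)$ the median of $b$ over $\hat T$), and then controls this integral directly. The median is chosen precisely so that, splitting $T$ into $E_1^T,E_2^T$ and $\hat T$ into $F_1^T,F_2^T$ according to where $b$ lies relative to $\alpha_{\hat T}(b)$, one has both $|F_s^T|\approx|\hat T|$ and the pointwise inequality $|b(g)-\alpha_{\hat T}(b)|\le|b(\hat g)-b(g)|$ for $(g,\hat g)\in E_s^T\times F_s^T$, with $b(\hat g)-b(g)$ of one sign there. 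Combined with the sign-consistency and lower bound of $K_\ell$ from Theorem~\ref{nondegen} (with $N=0$, no large-$N$ needed), the absolute values can be removed and the integral recognized as $|\langle[b,R_\ell]\,(\text{indicator bump on }F_s^T),\,(\text{indicator bump on }E_s^T)\rangle|$ --- indicator-type test functions on both sides, not a Haar function --- and \eqref{e:NWO} finishes. To repair your argument you would have to replace the freeze-and-absorb step by this median/sign-matching device; as written the oscillation error cannot be controlled.
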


\begin{proof}
We will ultimately apply the Rochberg--Semmes \cite{RS} notion of   NWO sequences, namely the  inequality \eqref{e:NWO}.   
By \eqref{tttt1}, we have
\begin{align}\label{comcom1}
(2n+1)^{(2n+2)k}\int_{\HH^{n}}|E_{k+1}(b)(g)-E_{k}(b)(g)|^{p}dg
&=\sum_{T\in\tile_{-k}}\fint_{T}|E_{k+1}(b)(g)-E_{k}(b)(g)|^{p}dg\nonumber\\
&\leq C\sum_{T\in\tile_{-k}}|T|^{-p/2}\left|\int_{T}b(g)h_{T}(g)dg\right|^{p}.
\end{align}
To continue, for any $T\in\tile_{-k}$, let $\hat{T}$ be the tile chosen in Theorem \ref{nondegen} with $N=0$, then $K_{\ell}({\hat g}^{-1}g)$ does not change sign for all $(g,{\hat g})\in T\times\hat{T}$ and
\begin{align}\label{lower}
|K_{\ell}({\hat g}^{-1}g)|\geq\frac{C}{|T|},
\end{align}
for some constant $C>0$.
 Also, let $\alpha_{\hat{T}}(b)$ be a median value of $b$ over $\hat{T}$. This  means $\alpha_{\hat{T}}(b)$ is a real number such that defining 
for a tile $ S$, 
\begin{align} \label{e:E1S}
E_{1}^{S}:=\left\{g\in S:b(g) < \alpha_{\hat{T}}(b)\right\}\ \ {\rm and}\ \
E_{2}^{S}:=\left\{g\in S:b(g)>\alpha_{\hat{T}}(b)\right\}, 
\end{align}
we have, with $ S= \hat T $,   the upper bound $  \lvert  E ^{\hat T} _{j}\rvert \leq \tfrac{1}2 \lvert  \hat  T\rvert  $ for $ j=1,2$. 
A median value always exists, but  may not be unique (see for example \cite{Journe}).
We use the notation $ $

Next we decompose $T$ into a union of sub-tiles by writing $T=\bigcup_{i=1}^{M_{n}}P_{i}$, where $P_{i}\in\tile_{-k-1}$ and $P_{i}\subseteq T$ satisfying $P_{i}\neq P_{j}$ if $i\neq j$.
By the cancellation property of $h_{T}$, we see that
\begin{align}\label{comcom2}
|T|^{-1/2}\left|\int_{T}b(g)h_{T}(g)dg\right|&=|T|^{-1/2}\left|\int_{T}(b(g)-\alpha_{\hat{T}}(b))h_{T}(g)\,dg\right|\nonumber\\
&\leq \frac{1}{|T|}\int_{T}\left|b(g)-\alpha_{\hat{T}}(b)\right|dg\nonumber\\
&\leq \frac{1}{|T|}\sum_{i=1}^{M_n}\int_{P_{i}}\left|b(g)-\alpha_{\hat{T}}(b)\right|dg\nonumber\\
&\leq \frac{1}{|T|}\sum_{i=1}^{M_n}\int_{P_{i}\cap E_{1}^{T}}\left|b(g)-\alpha_{\hat{T}}(b)\right|dg+ \frac{1}{|T|}\sum_{i=1}^{M_n}\int_{P_{i}\cap E_{2}^{T}}\left|b(g)-\alpha_{\hat{T}}(b)\right|dg\nonumber\\
&=:{\rm I}_{1}^{T}+{\rm I}_{2}^{T}.
\end{align}
Above, we are using the notation \eqref{e:E1S}.  

Now we denote
\begin{align*}
F_{1}^{T}:=\{{\hat g}\in \hat{T}:b({\hat g})\geq\alpha_{\hat{T}}(b)\}\ \ {\rm and}\ \
F_{2}^{T}:=\{{\hat g}\in \hat{T}:b({\hat g})\leq\alpha_{\hat{T}}(b)\}.
\end{align*}
Then by the definition of $\alpha_{\hat{T}}(b)$, we have $|F_{1}^{T}|=|F_{2}^{T}|\sim|\hat{T}|$ and $F_{1}^{T}\cup F_{2}^{T}=\hat{T}$. Note that for $s=1,2$, if $g\in E_{s}^{T}$ and $ \hat g\in F_{s}^{T}$, then 
\begin{align*}
\left|b(g)-\alpha_{\hat{T}}(b)\right|&\leq\left|b(g)-\alpha_{\hat{T}}(b)\right|+\left|\alpha_{\hat{T}}(b)-b({\hat g})\right|\\
&=\left|b(g)-\alpha_{\hat{T}}(b)+\alpha_{\hat{T}}(b)-b({\hat g})\right|= \left|b({\hat g})-b(g)\right|.
\end{align*}
Therefore, for $ s=1,2$, 
\begin{align}\label{haha}
{\rm I}_{s}^{T}&\lesssim \frac{1}{|T|}\sum_{i=1}^{M_n}\int_{P_{i}\cap E_{s}^{T}}\left|b(g)-\alpha_{\hat{T}}(b)\right|dg\frac{|F_{s}^{T}|}{|T|}\nonumber\\
&\lesssim \frac{1}{|T|}\sum_{i=1}^{M_n}\int_{P_{i}\cap E_{s}^{T}}\int_{F_{s}^{T}}\left|b(g)-\alpha_{\hat{T}}(b)\right|\left|K_{\ell}({\hat g}^{-1}g)\right|d{\hat g}dg\nonumber\\
&\lesssim \frac{1}{|T|}\sum_{i=1}^{M_n}\int_{P_{i}\cap E_{s}^{T}}\int_{F_{s}^{T}}\left|b({\hat g})-b(g)\right|\left|K_{\ell}({\hat g}^{-1}g)\right|d{\hat g}dg\nonumber\\
&=\frac{1}{|T|}\sum_{i=1}^{M_n}\left|\int_{P_{i}\cap E_{s}^{T}}\int_{F_{s}^{T}}(b({\hat g})-b(g))K_{\ell}({\hat g}^{-1}{\hat g})d{\hat g}dg\right|,
\end{align}
where in the last equality we used the fact that $K_{\ell}({\hat g}^{-1}g)$ and $b({\hat g})-b(g)$ do not  change sign for $(g,{\hat g})\in (T_{i}\cap E_{s}^{T})\times F_{s}^{T}$, $s=1,2$. This, in combination with the inequalities \eqref{comcom1} and \eqref{comcom2}, implies that
\begin{align}\label{eee ortho S norm} 
(2n+1)^{(2n+2)k}\int_{\HH^{n}} &|E_{k+1}(b)(g)-E_{k}(b)(g)|^{p}dg 
\\
& \lesssim \sum_{T\in\tile_{-k}}|T|^{-p/2}\left|\int_{T}b(g)h_{T}(g)dg\right|^{p}
\\
& \lesssim  \sum_{s=1}^{2}\sum_{T\in\tile_{-k}}\left|{\rm I}_{s}^{T}\right|^{p}
\\
&\lesssim  \sum_{s=1}^{2}\sum_{T\in\tile_{-k}}\left(\sum_{i=1}^{M_n}\left|\left\langle[b,R_{\ell}] \frac{|P_{i}|^{1/2}
\chi_{F_{s}^{T}}}{|T|},\frac{\chi_{E_{s}^{T}}}{|P_{i}|^{1/2}}\right\rangle\right|\right)^{p}.
\end{align}
Note that $e_T:=\frac{|P_{i}|^{1/2}
\chi_{F_{s}^{T}}}{|T|} \subset \hat T$ and $f_T:=\frac{\chi_{E_{s}^{T}}}{|P_{i}|^{1/2}} \subset T$. 
Based on Theorem \ref{nondegen} with $N=0$, we see that for each $T\in \tile_{-k}$, there is a unique $T_{A_0}\in \tile_{-k+A_0}$ such that $T, \hat T\subset T_{A_0}$. Hence,  $|e_T|, |f_T|\leq C|T_{A_0}|^{-{1\over2}}\chi_{T_{A_0}} $, where $C$ is an absolute constant depending only on $n$ and $A_0$. Note also that each $T_{A_0}\in \tile_{-k+A_0}$ contains only a finite number (depending on $n,A_0$) of $T\in \tile_{-k}$ with $T, \hat T\subset T_{A_0}$.
Sum this last inequality over $ k\in \mathbb Z $, and appeal  to \eqref{e:NWO} to conclude the  Lemma.  
\end{proof}

This is an immediate corollary. 

\begin{coro}\label{p:step1}
Let $2n+2<p<\infty$ and suppose that $b$ is a locally integrable function satisfying $\|[b,R_{\ell}]\|_{S^p}<\infty$, then there exists a constant $C>0$ such that for any $k\in\mathbb{Z}$,
\begin{align} \label{e:step1}
\|b-E_{k}(b)\|_{p}\leq C(2n+1)^{-(2n+2)k/p}\|[b,R_{\ell}]\|_{S^p}.
\end{align}
\end{coro}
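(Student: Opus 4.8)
The plan is to deduce Corollary~\ref{p:step1} from Lemma~\ref{step1} by a telescoping-plus-summation argument, exploiting the martingale structure of the conditional expectations $E_k$. First I would note that, since $b \in B_{p,p}^{(2n+2)/p}$ is not yet known, the starting point is instead the a priori fact that $E_m(b) \to b$ in $L^p_{\mathrm{loc}}$ as $m \to \infty$ (this is the martingale convergence statement implicit in Lemma~\ref{thm:convergence} applied to the Haar expansion, since $E_m(b) - E_k(b) = \sum_{-k \le -i < -m}$ \dots of Haar blocks); combined with the finiteness of the right-hand side of \eqref{e:step1} in Lemma~\ref{step1}, this will justify writing, for fixed $k$,
\begin{align*}
b - E_k(b) = \sum_{m \ge k} \bigl( E_{m+1}(b) - E_m(b) \bigr)
\end{align*}
with convergence in $L^p$ on each tile.

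The second step is the norm estimate. Because the blocks $E_{m+1}(b) - E_m(b)$ for distinct $m$ are built from mutually orthogonal Haar functions supported on nested tiles, I would use the (quasi-)triangle inequality in $\ell^p$ together with the almost-orthogonality to pass from the sum to the $\ell^p$-sum of the pieces. Concretely, writing $d_m := E_{m+1}(b) - E_m(b)$, one has $\|b - E_k(b)\|_p \le \bigl( \sum_{m \ge k} \|d_m\|_p^{?} \bigr)^{1/?}$; for $1 < p < \infty$ the square-function / Littlewood--Paley inequality for martingale differences gives $\|b - E_k(b)\|_p \approx \bigl\| \bigl( \sum_{m \ge k} |d_m|^2 \bigr)^{1/2} \bigr\|_p$, and since for $p \ge 2$ one has $\ell^2 \hookrightarrow \ell^p$ pointwise, this is $\lesssim \bigl( \sum_{m \ge k} \|d_m\|_p^p \bigr)^{1/p}$. (The case $2n+2 < p$ forces $p > 2$, so this embedding is available; if one wanted all $1<p<\infty$ one would instead just use $\|b-E_k(b)\|_p \le \sum_{m\ge k}\|d_m\|_p$, but the geometric gain below makes the cruder bound wasteful, so I would keep the square-function route.)

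The third step is to insert the weights. By Lemma~\ref{step1}, $\sum_{m} (2n+1)^{(2n+2)m} \|d_m\|_p^p \lesssim \|[b,R_\ell]\|_{S^p}^p$. For $m \ge k$ we bound $\|d_m\|_p^p = (2n+1)^{-(2n+2)m} \cdot (2n+1)^{(2n+2)m}\|d_m\|_p^p$, and since $(2n+1)^{-(2n+2)m} \le (2n+1)^{-(2n+2)k}$ for $m \ge k$, we get
\begin{align*}
\sum_{m \ge k} \|d_m\|_p^p \le (2n+1)^{-(2n+2)k} \sum_{m \ge k} (2n+1)^{(2n+2)m} \|d_m\|_p^p \lesssim (2n+1)^{-(2n+2)k}\, \|[b,R_\ell]\|_{S^p}^p.
\end{align*}
Taking $p$-th roots yields exactly \eqref{e:step1}. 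I expect the only genuine subtlety to be the first step — rigorously justifying the telescoping identity and the $L^p$ convergence of $b - E_k(b)$ before knowing $b$ lies in a Besov space — which is handled by the unconditional $L^p$-convergence of the Haar expansion in Lemma~\ref{thm:convergence} together with the now-established summability $\sum_m (2n+1)^{(2n+2)m}\|d_m\|_p^p < \infty$; everything after that is the routine weighted-$\ell^p$ manipulation above.
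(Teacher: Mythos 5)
Your overall plan --- telescope $b - E_k(b) = \sum_{m\ge k}(E_{m+1}(b)-E_m(b))$ and then feed Lemma~\ref{step1} into a weighted sum --- is the paper's approach, but the square-function step in the middle goes in the wrong direction. For $p\ge 2$ the pointwise embedding $\ell^2\hookrightarrow\ell^p$ reads $(\sum_m|a_m|^p)^{1/p}\le(\sum_m|a_m|^2)^{1/2}$, so inserting it inside the $L^p$ norm of the square function gives
\begin{align*}
\Big\|\Big(\sum_{m\ge k}|d_m|^2\Big)^{1/2}\Big\|_p \ \ge\ \Big(\int \sum_{m\ge k}|d_m|^p\Big)^{1/p} = \Big(\sum_{m\ge k}\|d_m\|_p^p\Big)^{1/p},
\end{align*}
which is the \emph{reverse} of what you assert. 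The quantity you then bound in step three is therefore a lower bound for $\|b-E_k(b)\|_p$ (up to the Burkholder--Davis--Gundy constants), not an upper bound, so the argument does not close as written.

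The fix is exactly the route you set aside as ``wasteful,'' and it is what the paper does. Lemma~\ref{step1}, read one term at a time, already gives $\|E_{m+1}(b)-E_m(b)\|_p\le C(2n+1)^{-(2n+2)m/p}\|[b,R_\ell]\|_{S^p}$ for every $m$; the plain triangle inequality and the convergent geometric series $\sum_{m\ge k}(2n+1)^{-(2n+2)m/p}\approx(2n+1)^{-(2n+2)k/p}$ then yield \eqref{e:step1} in one line. Nothing is lost here: the geometric decay in $m$ does the work, so the crude $\ell^1$-in-$m$ bound is not actually wasteful. (If you insist on a square function, the usable inequality for $p\ge2$ is $\|(\sum|d_m|^2)^{1/2}\|_p\le(\sum\|d_m\|_p^2)^{1/2}$ via Minkowski in $L^{p/2}$, after which H\"older in $m$ with exponents $p/2$ and $p/(p-2)$ recovers the weighted $\ell^p$ sum of Lemma~\ref{step1} together with a geometric factor --- strictly more work for the same conclusion.) Your concern in step one is handled as you guessed: a.e.\ martingale convergence of $E_k(b)$ together with Fatou applied to the partial sums, which are uniformly bounded by the geometric series, justifies the $L^p$ estimate without any prior Besov information on $b$.
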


\begin{proof}
 Note that $E_{k}(b)\rightarrow b$ a.e. as $k\rightarrow \infty$. Since $ p > 2n+2$,  it suffices to show that $\|E_{k+1}(b)-E_{k}(b)\|_{p}\leq C(2n+1)^{-(2n+2)k/p}\|[b,R_{\ell}]\|_{S^{p}}$.  But that is a consequence of Lemma~\ref{step1}.
\end{proof}

Comparing the lemma below to Lemma~\ref{step1}, we are replacing $ E _{k+1}b$ with $ b$, and hence we require $ p$ to be strictly bigger than   
$ 2n+2$.  

\begin{lemma}\label{step2}
Let $2n+2<p<\infty$ and suppose that $b\in L_{{\rm loc}}^{1}(\HH^{n})$, then 
\begin{align}\label{eee Besov type}
\left(\sum_k (2n+1)^{(2n+2)k}\|b-E_{k}(b)\|_{p}^{p}\right)^{1/p}&\lesssim  \|[b,R_{\ell}]\|_{S^{p}}.
\end{align}
\end{lemma}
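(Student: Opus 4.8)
The plan is to deduce Lemma~\ref{step2} from Corollary~\ref{p:step1} together with Lemma~\ref{step1}, exploiting that $E_k(b)\to b$ and a telescoping/summation argument. First I would record the pointwise telescoping identity
\begin{align}\label{e:telescope-plan}
b - E_k(b) = \sum_{m\geq k}\bigl(E_{m+1}(b) - E_m(b)\bigr)
\end{align}
which holds a.e.\ because $E_m(b)\to b$ almost everywhere as $m\to\infty$ (this convergence is exactly what is invoked in the proof of Corollary~\ref{p:step1}). Then the naive approach would be to estimate $\|b-E_k(b)\|_p$ by the triangle inequality and sum; but that loses the sharp power of $(2n+1)^{(2n+2)k}$ because the geometric series in the $L^p$ norms only converges when $p>2n+2$, and one must be careful to keep the constants uniform. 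So the cleaner route is: apply Corollary~\ref{p:step1} directly to get $\|b-E_k(b)\|_p^p \lesssim (2n+1)^{-(2n+2)k}\|[b,R_\ell]\|_{S^p}^p$ for \emph{each} $k$, multiply by $(2n+1)^{(2n+2)k}$, and then observe we cannot simply sum these since each term is bounded by the same quantity.

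Hence the real mechanism must be Lemma~\ref{step1} itself, which already sums in $k$. The idea is to bound
\begin{align}\label{e:plan-sum}
\sum_k (2n+1)^{(2n+2)k}\|b-E_k(b)\|_p^p
&\lesssim \sum_k (2n+1)^{(2n+2)k}\Bigl(\sum_{m\geq k}\|E_{m+1}(b)-E_m(b)\|_p\Bigr)^p.
\end{align}
Now I would apply the discrete Hardy-type inequality (equivalently, a weighted $\ell^p$ estimate): writing $a_m := (2n+1)^{(2n+2)m/p}\|E_{m+1}(b)-E_m(b)\|_p$, so that $\{a_m\}\in\ell^p$ with $\|\{a_m\}\|_{\ell^p}^p \lesssim \|[b,R_\ell]\|_{S^p}^p$ by Lemma~\ref{step1}, the right side of \eqref{e:plan-sum} becomes $\sum_k (2n+1)^{(2n+2)k}\bigl(\sum_{m\geq k}(2n+1)^{-(2n+2)m/p}a_m\bigr)^p$. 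Since $p>2n+2$, the exponent $(2n+2)/p<1$ is still positive, so the factor $(2n+1)^{-(2n+2)m/p}$ decays geometrically in $m$; a standard application of Hölder (or Young's inequality for the convolution with a geometric sequence, or Schur's test) bounds this by a constant times $\sum_m a_m^p$. This is where the strict inequality $p>2n+2$ is essential — it guarantees $(2n+1)^{(2n+2)(k-m)/p}$ is summable in $k\leq m$ with a constant independent of $m$.

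I expect the main obstacle to be purely bookkeeping: making the Hardy inequality step airtight with the correct weights, in particular confirming the interchange of the roles of the slowly-growing weight $(2n+1)^{(2n+2)k}$ outside and the fast-decaying $(2n+1)^{-(2n+2)m/p}$ inside works out to a summable geometric kernel. An alternative, perhaps slicker, presentation: combine $\|b - E_k(b)\|_p \le \sum_{m \ge k}\|E_{m+1}(b)-E_m(b)\|_p$ with the elementary inequality $\bigl(\sum_{m\ge k} c_m\bigr)^p \le C_\epsilon \sum_{m\ge k}(2n+1)^{\epsilon(2n+2)(m-k)}c_m^p$ (valid for any $\epsilon<1$, with $c_m\ge0$), choosing $\epsilon$ so that $\epsilon(2n+2)<(2n+2)\cdot\frac{p-1}{p}$ still leaves room; then multiply by $(2n+1)^{(2n+2)k}$, sum in $k$, and swap the order of summation — the inner geometric sum in $k$ converges precisely because $p>2n+2$. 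Either way, the conclusion is $\sum_k (2n+1)^{(2n+2)k}\|b-E_k(b)\|_p^p \lesssim \sum_k (2n+1)^{(2n+2)k}\|E_{k+1}(b)-E_k(b)\|_p^p \lesssim \|[b,R_\ell]\|_{S^p}^p$, which is \eqref{eee Besov type}.
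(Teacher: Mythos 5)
Your argument is correct, but it takes a route that is related to, yet distinct from, the one in the paper. The paper uses a one-step triangle inequality
\[
\|b-E_k(b)\|_p \le \|b-E_{k+1}(b)\|_p + \|E_{k+1}(b)-E_k(b)\|_p,
\]
takes the weighted $\ell^p$ norm in $k$, re-indexes the first resulting sum to identify it as $(2n+1)^{-(2n+2)/p}\,\mathfrak J$ where $\mathfrak J$ is the left side of \eqref{eee Besov type}, and absorbs that term back into $\mathfrak J$ (the displayed constant $2^{-(2n+2)/p}$ in the paper is a typo for $(2n+1)^{-(2n+2)/p}$, and $(2n+2)^{(2n+2)k}$ for $(2n+1)^{(2n+2)k}$). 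Then Lemma~\ref{step1} finishes the estimate. Your approach instead unrolls the full telescoping series $b-E_k(b)=\sum_{m\ge k}(E_{m+1}(b)-E_m(b))$ and applies a discrete Hardy / Young-for-convolution inequality with the geometric kernel $(2n+1)^{(2n+2)(k-m)/p}\chi_{m\ge k}$. The two are morally the same computation --- the absorption argument is exactly what results when one sums the geometric factors implicitly --- but they are packaged differently. Your version has a mild advantage: it does not require the a priori finiteness of $\mathfrak J$ that the paper's absorption step silently assumes (the paper would need a truncation-and-limit argument to be fully rigorous, which it omits). The paper's version is shorter on the page.

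One inaccuracy worth flagging in your write-up: you assert that the strict inequality $p>2n+2$ is what makes $(2n+1)^{(2n+2)(k-m)/p}$ summable over $k\le m$. In fact that geometric series converges for \emph{every} $p\in(0,\infty)$, since it only needs $(2n+2)/p>0$. The hypothesis $p>2n+2$ does not enter the Hardy step (nor, incidentally, the paper's absorption constant being strictly less than $1$); it is carried along because it is the regime in which the Besov space $B_{p,p}^{(2n+2)/p}$ is nontrivial and in which Corollary~\ref{p:step1} is used elsewhere. You correctly identified at the outset that citing Corollary~\ref{p:step1} term by term is useless (each term is bounded by the same constant), and correctly switched to Lemma~\ref{step1}; that part of the reasoning is exactly right.
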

\begin{proof}
Denote the left-hand side of \eqref{eee Besov type} by $ \mathfrak J$. 
Then we see that
\begin{align*}
\mathfrak J&\leq \left(\sum_k(2n+1)^{(2n+2)k}\|b-E_{k+1}(b)\|_{p}^{p}\right)^{1/p}+\left(\sum_k(2n+1)^{(2n+2)k}\|E_{k+1}(b)-E_{k}(b)\|_{p}^{p}\right)^{1/p}\\
&=\left(\sum_{k}(2n+1)^{(2n+2)(k-1)}\|b-E_{k}(b)\|_{p}^{p}\right)^{1/p}+\left(\sum_k(2n+1)^{(2n+2)k}\|E_{k+1}(b)-E_{k}(b)\|_{p}^{p}\right)^{1/p}\\
&\leq 2^{-(2n+2)/p}\left(\sum_k(2n+1)^{(2n+2)k}\|b-E_{k}(b)\|_{p}^{p}\right)^{1/p}
+\left(\sum_k(2n+2)^{(2n+2)k}\|E_{k+1}(b)-E_{k}(b)\|_{p}^{p}\right)^{1/p}\\
&=: {\textup{Term}_{1}}+{\textup{Term}_{2}}. 
\end{align*}
Since $ 2n+2 < p < \infty $,  we see that $ {\textup{Term}_{1}}$ can be absorbed into $ \mathfrak J$.  
  Lemma~\ref{step1} controls $ {\textup{Term}_{2}}$. 
\end{proof}

\begin{proposition}\label{schattenlarge1}
Let $2n+2<p<\infty$ and suppose that $b\in L_{{\rm loc}}^{1}(\HH^{n})$, then there exists a constant $C>0$ such that
\begin{align*}
\|b\|_{B_{p,p}^{\frac{2n+2}{p}}(\HH^n)}\leq C\|[b,R_{\ell}]\|_{S^p}.
\end{align*}
\end{proposition}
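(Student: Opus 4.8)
The plan is to derive the proposition directly from Lemma~\ref{step2}. That lemma already gives
\[
\sum_{k\in\ZZ}(2n+1)^{(2n+2)k}\,\|b-E_k(b)\|_p^p\lesssim\|[b,R_\ell]\|_{S^p}^p ,
\]
so it suffices to establish the scale-invariant comparison
\begin{equation*}
\|b\|_{B_{p,p}^{(2n+2)/p}(\HH^n)}^p\;\lesssim\;\sum_{k\in\ZZ}(2n+1)^{(2n+2)k}\,\|b-E_k(b)\|_p^p ,
\end{equation*}
that is, the integral Besov seminorm is dominated by the martingale-difference quantity produced by the tile filtration. This is exactly the regime $\alpha=(2n+2)/p<1$ (equivalently $p>2n+2$) in which such an equivalence is available, which is why the hypothesis $p>2n+2$ reappears here.

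First I would split the $dg$-integral defining the Besov seminorm (with $\alpha=(2n+2)/p$, so the weight is $\rho(g)^{-(4n+4)}$) over the dyadic annuli $A_k:=\{g\in\HH^n:(2n+1)^{-k-1}<\rho(g)\le(2n+1)^{-k}\}$, $k\in\ZZ$, on which $\rho(g)^{-(4n+4)}\approx(2n+1)^{(4n+4)k}$ and $|A_k|\approx(2n+1)^{-(2n+2)k}$ (the homogeneous dimension of $\HH^n$ being $2n+2$). On $A_k$ I would insert the conditional expectation at the matching scale and write
\begin{equation*}
b(g\,\cdot\,)-b(\cdot)=\bigl(b(g\,\cdot\,)-(E_k b)(g\,\cdot\,)\bigr)+\bigl((E_k b)(g\,\cdot\,)-(E_k b)(\cdot)\bigr)+\bigl((E_k b)(\cdot)-b(\cdot)\bigr).
\end{equation*}
By left-invariance of the Haar measure, the first term has $L^p$-norm exactly $\|b-E_k(b)\|_p$, as does the third; these are precisely the quantities we want. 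Hence everything reduces to the middle term $\|(E_k b)(g\,\cdot\,)-(E_k b)(\cdot)\|_p$. Since $E_kb$ is constant on each tile of $\tile_{-k}$, this equals $\bigl(\sum_{T,T'}|(b)_{T'}-(b)_T|^p\,|\{x\in T:gx\in T'\}|\bigr)^{1/p}$, the sum running over pairs of tiles with $gT\cap T'\ne\varnothing$.

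Controlling this middle term is the crux, and here the non-commutativity of $\HH^n$ is the obstacle. Because $d$ is left-invariant, $gT$ is again comparable to a ball of radius $\approx(2n+1)^{-k}$ (centered at $g\cdot\cent(T)$), so it meets only $O(1)$ tiles $T'\in\tile_{-k}$; but these $T'$ need not be close to $T$, since $d\bigl(g\cdot\cent(T),\cent(T)\bigr)=\|\cent(T)^{-1}g^{-1}\cent(T)\|\approx(2n+1)^{-k}+\bigl((2n+1)^{-k}\,|z_{\cent(T)}|\bigr)^{1/2}$, where $z_{\cent(T)}$ is the $\C^n$-component of $\cent(T)$, and this can be far larger than the tile width away from the center of $\HH^n$. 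To handle this I would group the tiles $T\in\tile_{-k}$ by the dyadic size $|z_{\cent(T)}|\approx\mathfrak R$, and for each group pass to the coarser auxiliary scale $k'=k'(\mathfrak R)$ with $(2n+1)^{-k'}\approx\bigl((2n+1)^{-k}\mathfrak R\bigr)^{1/2}$, at which $T$ and the relevant $T'$ lie inside a common ancestor tile $\hat T\in\tile_{-k'}$. Telescoping $(b)_{T'}-(b)_T$ along the chains of ancestors of $T$ and $T'$ up to $\hat T$ bounds $|(b)_{T'}-(b)_T|$ by a sum of conditional-expectation increments $\fint_{\hat S}|b-E_{k'}(b)|$, while a Journé-type estimate for the Heisenberg tiling (cf.\ the tiling lemmas of \S\ref{tilesection} and Lemma~\ref{thm:Heisenberg-grid}) controls the measure of the bad set $\{x\in T:gx\in T'\}$ on which the translation actually crosses tile faces. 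Reassembling these bounds over the groups, over the annuli $A_k$, and over $k\in\ZZ$, and matching the geometric factors $(2n+1)^{(4n+4)k}|A_k|\approx(2n+1)^{(2n+2)k}$, produces the displayed comparison and hence the proposition via Lemma~\ref{step2}. The main difficulty is exactly this last step: the unbounded vertical displacement forced by left translation requires the adaptive choice of the auxiliary scale $(2n+1)^{-k'}\approx\bigl((2n+1)^{-k}|z|\bigr)^{1/2}$ and a somewhat delicate accounting of the weights when the multi-parameter sum is finally collapsed.
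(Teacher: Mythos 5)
Your reduction to Lemma~\ref{step2} is the right first move, and the first and third pieces of your three-term split are indeed handled by left-invariance of Haar measure, but the middle term $\|(E_kb)(g\,\cdot\,)-(E_kb)(\cdot)\|_p$ is where the argument fails, and fails irreparably: the scale-invariant comparison you are aiming for,
\[
\|b\|_{B_{p,p}^{(2n+2)/p}(\HH^n)}^p\;\lesssim\;\sum_{k\in\ZZ}(2n+1)^{(2n+2)k}\,\|b-E_k(b)\|_p^p ,
\]
with $E_k$ built from a \emph{fixed} tile system, is false. On $\RR$ take $b=\chi_{[0,1)}$ and the standard dyadic grid: $E_kb=b$ for every $k\ge0$ and $\|b-E_kb\|_p^p\lesssim 1$ for $k<0$, so $\sum_k 2^{k}\|b-E_kb\|_p^p<\infty$, yet $\|b\|_{B_{p,p}^{1/p}(\RR)}^p=\iint|b(x)-b(y)|^p|x-y|^{-2}\,dx\,dy=2\int_0^1\bigl(x^{-1}+(1-x)^{-1}\bigr)\,dx=\infty$. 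The same happens on $\HH^n$ with $b=\chi_{T_0}$ for a fixed tile $T_0$: nestedness (Lemma~\ref{thm:Heisenberg-grid}) gives $E_kb=b$ once the tiles are finer than $T_0$, so your proposed right-hand side is finite although $b\notin B_{p,p}^{(2n+2)/p}(\HH^n)$. The obstruction is that the boundaries of a fixed tile system persist across all scales; adjacent tiles $T,T'\in\tile_{-k}$ need not share a common ancestor within any bounded number of generations (on $\RR$, $[-2^{-k},0)$ and $[0,2^{-k})$ share no dyadic ancestor at all), so telescoping $|(b)_{T'}-(b)_T|$ up to $\hat T$ costs unboundedly many increments. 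This issue is already present in $\RR^n$ and is logically prior to the Heisenberg non-commutativity that your adaptive scale $k'(\mathfrak R)$ is built to handle.

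The paper circumvents it by not committing to a fixed tile system. After localizing to scales $L\le k\le M$ it fixes a large ball $B$ of radius $(2n+1)^{-L+A}$, sets $b_{\tilde g}(\cdot):=b(\tilde g\,\cdot\,)$, and dominates the truncated Besov quantity by
\[
\frac{1}{|B|}\int_B\sum_{k=L}^M\sum_{T\in\tile_{-k}}(2n+1)^{2(2n+2)k}\int_T\int_T|b_{\tilde g}(g)-b_{\tilde g}(\hat g)|^p\,dg\,d\hat g\,d\tilde g ,
\]
the point being that for each fixed pair $(g,\hat g)$ at distance at most $(2n+1)^{-k-1}$ a positive proportion of shifts $\tilde g\in B$ places $\tilde g^{-1}g$ and $\tilde g^{-1}\hat g$ in a single tile of $\tile_{-k}$ (each tile contains a comparable ball by Lemma~\ref{thm:Heisenberg-grid}, and $d$ is left-invariant). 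After that averaging, $E_k(b_{\tilde g})$ is only ever inserted inside one tile, Lemma~\ref{step2} applies to each translate $b_{\tilde g}$, and $\|[b_{\tilde g},R_\ell]\|_{S^p}=\|[b,R_\ell]\|_{S^p}$ since $R_\ell$ is left-convolution. This averaging over translates, a random-dyadic-grid device in the spirit of the one-third trick, is what replaces rather than refines your middle-term estimate; it is the ingredient missing from the proposal.
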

\begin{proof}
To begin with, we note that
\begin{align}
\int_{\HH^{n}}\int_{\HH^{n}}\frac{|b(g)-b({\hat g})|^{p}}{d(g,{\hat g})^{2(2n+2)}}dgd\hat g\lesssim \sum_{k\in\mathbb{Z}}(2n+1)^{2(2n+2)k}\iint_{d(g,{\hat g})\leq (2n+1)^{-k-1}}|b(g)-b({\hat g})|^{p}dgd{\hat g}.
\end{align}
Hence, it suffices to show that
\begin{align}\label{maingoal}
\sum_{k=L}^{M}(2n+1)^{2(2n+2)k}\iint_{d(g,\hat g)\leq (2n+1)^{-k-1}}|b(g)-b({\hat g})|^{p}dgd{\hat g}\leq C\|[b,R_{\ell}]\|_{S^{p}}^p,
\end{align}
where $C$ is a constant independent of $L<M\in\mathbb{Z}$.

Recall that a tile $T$ in $\tile_{-k}$ is approximately a Heisenberg ball of radius $(2n+1)^{-k}$. Fix a Heisenberg ball $B$ centered at the origin with radius $(2n+1)^{-L+A}$ for a large fixed integer $ A$, and then denote $b_{\tilde g}(g) := b(\tilde g g)$ for $\tilde g\in \HH^n$. Then
 the left-hand side of \eqref{maingoal} is dominated by a constant times 
\begin{align}\label{eee prob average}
{1\over |B|}\int_B \sum_{k=L}^{M}\sum_{T\in\tile_{-k}} & (2n+1)^{2(2n+2)k} \int_T\int_T |b_{\tilde g}(g)-b_{\tilde g}({\hat g})|^p\, d{\hat g}\, dg\, d\tilde g\nonumber\\
&\lesssim {1\over |B|}\int_B \sum_{k=L}^{M}\sum_{T\in\tile_{-k}}  (2n+1)^{(2n+2)k} \int_T| b_{\tilde g}(g)- E_k(b_{\tilde g})(g) |^p\,dg\,d\tilde g\nonumber\\
&\lesssim {1\over |B|}\int_B C\|[b_{\tilde g},R_{\ell}]\|_{S^{p}}^p\, d \tilde g,
\end{align}
where in the first inequality we added and subtracted the term $E_k(b_{\tilde g})$ by noting that for $g,\hat g\in T\in\tile_{-k}$, $E_k(b_{\tilde g})(g)=E_k(b_{\tilde g})(\hat g)$, and in the second inequality we use Lemma \ref{step2}. Next,  as the Riesz transform is convolution,  $\|[b_{\tilde g},R_{\ell}]\|_{S^{p}}=\|[b,R_{\ell}]\|_{S^{p}}$, we obtain that  the right-hand side of \eqref{eee prob average} is bounded by
$C\|[b,R_{\ell}]\|_{S^{p}}^p$. Hence, \eqref{maingoal} holds.

Therefore, the proof of Proposition \ref{schattenlarge1} is complete.
\end{proof}

\subsection{Proof of the sufficient condition}
\label{s:Sufficient}

\begin{proposition}\label{schattenlarge2}
Suppose $\ell\in \{1,2,\cdots,2n\}$, $2n+2<p<\infty$ and $b\in  L^1_{{\rm loc}}(\HH^n)$. If  $b\in B_{p,p}^{\frac{2n+2}{p}}(\HH^n)$, then $[b,R_{\ell}]\in S^p$.
\end{proposition}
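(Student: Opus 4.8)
The plan is to dominate $\|[b,R_{\ell}]\|_{S^p}$ by a sum over scales of local Schatten estimates, exploiting the tile and martingale structure developed above together with the fact that, since $p>2n+2\geq 4$, we work comfortably in the range $p>2$ where Schatten norms behave well under (almost) orthogonal decompositions. I would first set aside constants ($\int b=0$, which does not change the commutator) and expand $b=\sum_{k\in\Z}D_kb$, $D_kb:=E_{k+1}(b)-E_k(b)$, convergent by Lemma~\ref{thm:convergence}. For $g\neq g'$ let $Q(g,g')$ denote the smallest tile containing both; it exists and is unique by the nesting in Lemma~\ref{thm:Heisenberg-grid}. Putting $A_m:=\{(g,g'):Q(g,g')\in\tile_{-m}\}$, the sets $\{A_m\}_{m\in\Z}$ partition $\HH^n\times\HH^n$ off the diagonal, each $A_m$ is a disjoint union over $Q\in\tile_{-m}$ of subsets of $Q\times Q$, and on $A_m$ one has $E_m(b)(g)=E_m(b)(g')$, so the kernel $(b(g)-b(g'))K_{\ell}((g')^{-1}g)$ of $[b,R_{\ell}]$ equals $\bigl((b-(b)_Q)\chi_Q(g)-(b-(b)_Q)\chi_Q(g')\bigr)K_{\ell}((g')^{-1}g)$ there. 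This yields
\[
[b,R_{\ell}]=\sum_{m\in\Z}\Gamma_m, \qquad \Gamma_m=\bigoplus_{Q\in\tile_{-m}}\Gamma_m^Q ,
\]
where $\Gamma_m^Q$ is the part of the commutator of $R_{\ell}$ with $(b-(b)_Q)\chi_Q$ supported on pairs lying in $Q$ but in distinct subtiles of $Q$. For fixed $m$ the blocks $\Gamma_m^Q$ have pairwise orthogonal domains and ranges, so $\|\Gamma_m\|_{S^p}^p=\sum_{Q\in\tile_{-m}}\|\Gamma_m^Q\|_{S^p}^p$.

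The heart of the proof is the single-scale bound
\[
\|\Gamma_m^Q\|_{S^p}^p\ \lesssim\ (2n+1)^{(2n+2)m}\,\bigl\|(b-(b)_Q)\chi_{Q^*}\bigr\|_p^p ,
\]
with $Q^*$ a fixed dilate of $Q$. After rescaling $Q$ to the unit tile via \eqref{kjs}, this says that the commutator of $R_{\ell}$ with a mean-zero function on a fixed tile lies in $S^p$ with quantitative control, and it is precisely here that the hypothesis $p>2n+2$ enters: the commutator kernel behaves like $\rho((g')^{-1}g)^{-(2n+1)}$ near the diagonal, so the operator inherits the singular-value decay $\lambda_k\sim k^{-1/(2n+2)}$ of $(-\Delta_{\HH})^{-1/2}$ on a bounded domain, and $\{\lambda_k\}\in\ell^p$ exactly for $p>2n+2$. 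Concretely I would split $Q\times Q$ dyadically by $\rho((g')^{-1}g)\approx(2n+1)^{-m-j}$, $j\geq 0$; the resulting pieces are mutually orthogonal, and I would bound each using the general Schatten--kernel inequalities recalled at the start of this section, in the form $\|A\|_{S^p}\leq\|A\|_{S^2}^{2/p}\|A\|_{\mathrm{op}}^{1-2/p}$ for $p\geq 2$ --- the Hilbert--Schmidt factor coming from the size bound \eqref{MEHT} and the $L^2$ oscillation of $b$ at scale $(2n+1)^{-m-j}$, and the operator-norm factor from a Schur test that keeps the commutator cancellation $|b(g)-b(g')|$ against $|K_{\ell}|$. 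Summing over $j$ and over $Q\in\tile_{-m}$, using H\"older and the $L^1/L^\infty$ normalisation of the Haar functions (Lemma~\ref{prop:HaarFuncProp}(vii)--(viii)) to pass between local $L^2$ and $L^p$ norms, gives the displayed bound.

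To sum over $m$ I would use that $p>2$ and a Cotlar--Stein-type quasi-orthogonality in $S^p$: the operator norms of $\Gamma_m\Gamma_{m'}^{*}$ and $\Gamma_m^{*}\Gamma_{m'}$ decay geometrically in $|m-m'|$ since $\Gamma_m$ is concentrated at scale $m$, so $\|[b,R_{\ell}]\|_{S^p}^p\lesssim\sum_m\|\Gamma_m\|_{S^p}^p$. Together with the previous step this is $\lesssim\sum_m(2n+1)^{(2n+2)m}\|b-E_m(b)\|_p^p$, and writing $b-E_m(b)=\sum_{k\geq m}D_kb$, a discrete Young inequality (the weight $(2n+1)^{(2n+2)m}$ with the geometric tail forms an $\ell^1$ kernel acting on $\ell^p$) bounds this by $\sum_k(2n+1)^{(2n+2)k}\|D_kb\|_p^p\lesssim\|b\|_{B_{p,p}^{\frac{2n+2}{p}}(\HH^n)}^p$, the last step being the standard equivalence of the Besov norm with the martingale oscillation energy already used in the proof of Proposition~\ref{schattenlarge1}. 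Hence $[b,R_{\ell}]\in S^p$ with $\|[b,R_{\ell}]\|_{S^p}\lesssim\|b\|_{B_{p,p}^{\frac{2n+2}{p}}(\HH^n)}$. I expect the single-scale estimate of the second step to be the main obstacle: unlike on $\R^n$ the kernel $K_{\ell}$ has no closed form, so the near-diagonal analysis must be run purely from \eqref{MEHT} and the tile geometry, and $\alpha=(2n+2)/p$ being the borderline regularity ($B_{p,p}^{\alpha}\not\hookrightarrow L^\infty$) leaves no slack, forcing the operator-norm factors to be extracted from the commutator cancellation and the exponents in the $j$- and $Q$-sums to balance exactly; the decomposition of the commutator and the final summation are, by comparison, routine manipulations of geometric series.
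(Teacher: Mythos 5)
Your proposal takes a genuinely different route from the paper. The paper's proof of Proposition~\ref{schattenlarge2} follows Janson--Wolff closely: it invokes Russo's Schatten-class bound for integral operators in its weak mixed-norm form
\[
\|T\|_{S^{p,\infty}}\leq \|K\|_{L^{p},L^{p',\infty}}^{1/2}\|K^{*}\|_{L^{p},L^{p',\infty}}^{1/2},
\]
estimates the mixed norm of the commutator kernel $(b(g)-b(\hat g))K_\ell(\hat g^{-1}g)$ by the Besov norm via H\"older and a weak-type Young inequality (the factor $d(g,\hat g)^{-(2n+2)(1-2/p)}$ lies in weak $L^q$ with $1/q=1-2/p$), and then interpolates the resulting family of weak-Schatten bounds to get the strong one. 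No tiles, martingales, or almost-orthogonality are used at all in this direction. Your proposal is a multiscale tile decomposition with a local $S^2$--operator-norm interpolation and a Cotlar--Stein summation; it is more in the spirit of the \emph{necessity} proof of Proposition~\ref{schattenlarge1}.

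As written, your outline has real gaps. First, the claim that the annular pieces of a fixed $\Gamma_m^Q$ obtained by restricting to $\rho((g')^{-1}g)\approx (2n+1)^{-m-j}$ are ``mutually orthogonal'' is false: those operators all map $L^2(Q)$ to $L^2(Q)$ with overlapping ranges and domains, and neither $A_jA_{j'}^{*}=0$ nor $A_j^{*}A_{j'}=0$ holds for $j\ne j'$. You can still sum over $j$ by the $S^p$ triangle inequality ($p>2$), but then you must verify that the exponents produce a summable geometric series, which you have not done. Second, the interpolation $\|A\|_{S^p}\leq \|A\|_{S^2}^{2/p}\|A\|_{\mathrm{op}}^{1-2/p}$ forces you to estimate the $L^2$ oscillation of $b$ at each scale, while the hypothesis is an $L^p$ (Besov) condition; passing between these by H\"older on a tile of measure $|Q|$ introduces factors of $|Q|^{1/2-1/p}$, and whether these can be absorbed is exactly the balance you flag as the main obstacle but do not resolve. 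Third, Cotlar--Stein controls operator norms, not $S^p$ norms; to obtain $\|\sum_m\Gamma_m\|_{S^p}^p\lesssim\sum_m\|\Gamma_m\|_{S^p}^p$ you need a Schatten-class almost-orthogonality statement (or a cotype argument for $S^p$), which is not standard and is asserted, not proved. Finally, the heuristic ``the commutator kernel behaves like $\rho^{-(2n+1)}$'' implicitly assumes $b$ Lipschitz, which is not available. The paper's kernel-based argument sidesteps every one of these issues and is much shorter; your strategy may be completable but would require substantially more work than the sketch suggests.
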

\begin{proof}
  We follow the proof in \cite{JW},  which relies upon general estimates for Schatten norms of integral operators.   
  For the convenience of the readers, we briefly sketch the proof here. We first recall that $[b,R_{\ell}]$ is compact \cite{CDLW} when $b\in B_{p,p}^{\frac{2n+2}{p}}(\HH^n)\subset {\rm VMO}(\HH^n)$. Note that Russo (\cite{Russo}) proved that for general measure space $(X,\mu)$, if $p>2$ and $K(x,y)\in  L^{2}(X\times X)$, then the integral operator $T$ associated to the kernel $K(x,y)$ satisfies the following bound:
\begin{align*}
\|T\|_{S^{p}}\leq \|K\|_{L^p,L^{p^{\prime}}}^{1/2}\|K^{*}\|_{L^p,L^{p^{\prime}}}^{1/2},
\end{align*}
where $p'$ is the conjugate index of $p$, $K^{*}(x,y)=\overline{K(y,x)}$, and $\|\cdot\|_{L^p, L^{p^{\prime}}}$ denotes the mixed-norm:
$
\|K\|_{L^p,L^{p^{\prime}}}:=\big\|\|K(x,y)\|_{L^p(dx)}\big\|_{L^{p^{\prime}}(dy)}.
$
Later on Goffeng (\cite{Goffeng}) showed that the condition $K(x,y)\in  L^{2}(X\times X)$ in the above statement can be removed.

Moreover, Janson--Wolff (\cite[Lemma 1 and Lemma 2]{JW}) extended the above statement to the corresponding weak-type version general measure space $(X,\mu)$: if $p>2$ and $1/p+1/p^{\prime}=1$, then
\begin{align}\label{integral}
\|T\|_{S^{p,\infty}}\leq \|K\|_{L^{p},L^{p^{\prime},\infty}}^{1/2}\|K^{*}\|_{L^{p},L^{p^{\prime},\infty}}^{1/2},
\end{align}
where $\|\cdot\|_{L^p, L^{p^{\prime},\infty}}$ denotes the mixed-norm:
$
\|K\|_{L^p,L^{p^{\prime},\infty}}:=\big\|\|K(x,y)\|_{L^p(dx)}\big\|_{L^{p^{\prime},\infty}(dy)}.
$

Next, back to our setting on Heisenberg group, we note that  by weak-type Young's inequality, for $1/q=1-2/p$,
\begin{align}\label{verify1}
\left\|(b(g)-b(\hat g))K(g,\hat g)\right\|_{L^p, L^{p^{\prime},\infty}}
&\leq
\left\|\frac{b(g)-b(\hat g)}{d(g,\hat g)^{2n+2}}\right\|_{L^p, L^{p^{\prime},\infty}}\nonumber\\
&\leq \left\|\frac{b(g)-b(\hat g)}{d(g,\hat g)^{2(2n+2)/p}}\right\|_{L^{p},L^{p}}\left\|\frac{1}{d(g,\hat g)^{(2n+2)(1-2/p)}}\right\|_{L^{\infty},L^{q,\infty}}\\
&\leq C\|b\|_{B_{p,p}^{(2n+2)/p}(\HH^n)}.\nonumber
\end{align}
Similarly,
\begin{align}\label{verify2}\left\|(b(g)-b(\hat g))\overline{K(\hat g,g)}\right\|_{L^p, L^{p^{\prime},\infty}}\leq C\|b\|_{B_{p,p}^{(2n+2)/p}(\HH^n)},
\end{align}
Combining the inequalities \eqref{verify1}, \eqref{verify2} and then applying the weak-type Russo's inequality \eqref{integral}, we see that $$\|[b,R_{\ell}]\|_{S^{p,\infty}}\leq C\|b\|_{B_{p,p}^{(2n+2)/p}(\HH^n)}.$$ Since this inequality holds for all $2n+2<p<\infty$, we can apply the interpolation $(S^{p_1},S^{p_2})_{\theta_p}=S^{p}$ and $(B_{p_1,p_1}^{(2n+2)/p_1},B_{p_2,p_2}^{(2n+2)/p_2})_{\theta_{p}}=B_{p,p}^{(2n+2)/p}$, where $\frac{1-\theta_p}{p_1}+\frac{\theta_p}{p_2}=\frac{1}{p}$, to obtain that
\begin{align*}
\|[b,R_{\ell}]\|_{S^{p}}\leq C\|b\|_{B_{p,p}^{(2n+2)/p}(\HH^n)}.
\end{align*}
This finishes the proof of sufficient condition for the case $2n+2<p<\infty$.
\end{proof}

\section{Theorem \ref{schatten}:  $0<p\leq 2n+2$}\label{four}
\setcounter{equation}{0}

In this section, we prove the  second argument in Theorem \ref{schatten}. That is, for each $\ell\in\{1,2,\ldots,2n\}$ and for $0<p\leq 2n+2$, the commutator $[b,R_{\ell}]$ is in $S^p$ if and only if
 $b$ is a constant. The sufficient condition is obvious, since $[b,R_{\ell}]=0$ when $b$ is a constant. Thus, it suffices to show the necessary condition. 
 It suffices to consider  the critical case $p=2n+2$, by the inclusion $ S^p\subset S^{q}$ for $p<q$.  

To formulate our argument simplicity, we will usually identity $\mathbb{C}^{n}$ with $\mathbb{R}^{2n}$ in Lemmas \ref{signlemma}---\ref{lowerbound} and use the following notation to denote the points of $\mathbb{C}^{n}\times \mathbb{R}\equiv \mathbb{R}^{2n+1}: g=[z,t]\equiv [x,y,t]=[x_{1},\cdots,x_{n},y_{1},\cdots,y_{n},t]$ with $z=[z_{1},\cdots,z_{n}]$, $z_{j}=x_{j}+iy_{j}$ and $x_{j},y_{j},t\in\mathbb{R}$ for $j=1,\cdots,n$. Then the multiplication law can be explicitly expressed as
\begin{align*}
gg^{\prime}=[x,y,t][x^{\prime},y^{\prime},t^{\prime}]=[x+x^{\prime},y+y^{\prime},t+t^{\prime}+2\langle y,x^{\prime}\rangle-2\langle x,y^{\prime} \rangle],
\end{align*}
where $\langle \cdot,\cdot\rangle$ denotes the standard inner product in $\mathbb{R}^{n}$.

\begin{lemma}\label{signlemma}
There exists a positive integer $B_0$ such that for any tile $T\in \tile_{-k}$ and $a_{j}=\pm 1$ ($j=1,2,\cdots,2n$), there are tiles $T^{\prime}\in\tile_{-k-B_0}$, $T^{\prime\prime}\in\tile_{-k-B_0}$ such that $T^{\prime}\subset T$,
$T^{\prime\prime}\subset T$ and if $g=(g_{1},\cdots,g_{2n},t)\in T^{\prime\prime}$, $h=(h_{1},\cdots,h_{2n},t^{\prime})\in T^{\prime}$, then $a_{j}(g_{j}-h_{j})\gtrsim \wid(T)$ $(j=1,2,\ldots,2n)$.
\end{lemma}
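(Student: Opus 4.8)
The plan is to reduce the statement to a simple combinatorial/geometric fact about the product structure of the tiling established in Subsection~\ref{tilesection}. Recall from Lemma~\ref{thm:Heisenberg-grid}(iii) that each tile $T\in\tile_{-k}$ is a disjoint union of $(2n+1)^{2n+2}$ congruent subtiles in $\tile_{-k-1}$, and iterating, of $(2n+1)^{(2n+2)B_0}$ subtiles in $\tile_{-k-B_0}$. The key geometric input is that the tiling is \emph{product-like}: the tiles project onto cubes in the factor $\C^{\cdim}\equiv\R^{2n}$, and the centers of the subtiles of $T$ in $\tile_{-k-B_0}$ form (within the $\R^{2n}$ projection) a translate of a scaled integer grid of spacing $(2n+1)^{-k-B_0}$ inside the projected cube of sidelength $(2n+1)^{-k}=\wid(T)$. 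So along each of the $2n$ coordinate directions in $\R^{2n}$, the admissible center-coordinates of subtiles span an interval of length comparable to $\wid(T)$.

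First I would fix $B_0$ large enough that $(2n+1)^{B_0}\geq 100$, say, so that within $T$ there are at least, along each coordinate axis $j=1,\dots,2n$, two ``extreme'' layers of subtiles: those whose $j$-th projected center-coordinate lies in the lower quarter of the projected interval, and those in the upper quarter. Given the sign vector $(a_1,\dots,a_{2n})$, I would choose $T'$ to be a subtile in $\tile_{-k-B_0}$, $T'\subset T$, whose projected center has $j$-th coordinate in the lower quarter of $T$'s projected $j$-interval whenever $a_j=+1$ and in the upper quarter whenever $a_j=-1$; and I would choose $T''$ to be a subtile whose projected center is in the opposite quarter for each $j$. Such subtiles exist because the center set is a full product grid (Lemma~\ref{thm:Heisenberg-grid}(iv)--(v) and the product-like remark in Subsection~\ref{tilesection}). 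Then for $g=(g_1,\dots,g_{2n},t)\in T''$ and $h=(h_1,\dots,h_{2n},t')\in T'$, the coordinates $g_j$ and $h_j$ each lie within $O((2n+1)^{-k-B_0})$ of their respective subtile centers (by Lemma~\ref{thm:Heisenberg-grid}(iv) applied to $T',T''$), while the two subtile centers differ in the $j$-th coordinate by at least a fixed fraction, comparable to $(2n+1)^{-k}=\wid(T)$, with the sign equal to $a_j$. Choosing $B_0$ large enough that this center-separation dominates the two $O(\wid(T')) $ fluctuations gives $a_j(g_j-h_j)\gtrsim \wid(T)$ for every $j$, with implied constant depending only on $n$.

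The main obstacle is purely bookkeeping: one must be careful that the tile $T_o$ is a genuine \emph{fractal} set rather than a product box---its $t$-extent varies with $z$ through the function $f$ of the cited theorem of \cite{Str,Ty}. However, for this lemma only the $z$-coordinates $g_1,\dots,g_{2n}$ are constrained, and in the $z$-variables the tiling \emph{is} an honest translate-dilate of the cube $Q_0=[-1/2,1/2)^{2n}$ with the center grid a genuine product set, so the $t$-coordinate irregularity is irrelevant: no claim is made about $t-t'$. Thus the argument above goes through once one records precisely that the projected centers of the $\tile_{-k-B_0}$ subtiles of $T$ fill out, in each of the $2n$ coordinate directions, an arithmetic progression of common difference $(2n+1)^{-k-B_0}$ spanning (up to endpoints) the full projected width $\wid(T)$ of $T$. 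I would state this as a short observation, deduce the choice of $T',T''$, and then the estimate $a_j(g_j-h_j)\gtrsim \wid(T)$ follows immediately from the triangle inequality.
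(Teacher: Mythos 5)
Your proof is correct and essentially the same as the paper's: both proofs exploit the fact that $T$ projects in the horizontal variables onto a genuine cube of sidelength $\wid(T)$ and then locate two sub-tiles of a fixed much smaller generation sitting in opposite signed ``corners'' of that cube, so that their horizontal coordinates are separated by a definite fraction of $\wid(T)$ with the prescribed signs. The paper reaches these sub-tiles via the inscribed ball $B(\cent(T), C_1\wid(T))\subset T$ from Lemma~\ref{thm:Heisenberg-grid}(iv), picking a point $g_{o,1}$ with extreme horizontal coordinates and using its group inverse $g_{o,1}^{-1}$ to flip signs, whereas you go directly through the product grid of projected sub-tile centers; one small advantage of your version is that it handles an arbitrary sign vector $(a_1,\dots,a_{2n})$ explicitly, while the paper writes out only the all-positive case (and in fact has a harmless labeling slip in the final display, where $T'$ and $T''$ should be interchanged for the inequality $g_j-h_j\gtrsim\wid(T)$ to read correctly).
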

\begin{proof}
Consider first $T = \delta_{(2\cdim+1)^k} (T_o)$.  Based on (4) in Lemma \ref{thm:Heisenberg-grid}, we see that $B(o, C_1(2\cdim+1)^k)\subset T$. Then one can choose $g_{o,1}\in B(o, C_1(2\cdim+1)^k)$ such that $d(g_{o,1},o) = {3C_1\over4}(2\cdim+1)^k$, and that all the first $2n$ components of $g_{o,1}$ is positive and equals to ${3C_1\over4}(2\cdim+1)^k$.
Thus, we have $B(g_{o,1}, {C_1\over 40} (2\cdim+1)^k) \subset B(o, C_1(2\cdim+1)^k)$ and that for every $x=(x_1,\ldots,x_{2n},t_x)\in B(g_{o,1}, {C_1\over 40} (2\cdim+1)^k),$ we have $x_i>0$ and is equivalent to ${3C_1\over4}(2\cdim+1)^k$. Then taking the inverse of the ball $B(g_{o,1}, {C_1\over 40} (2\cdim+1)^k)$, we get other ball $B(g_{o,2}, {C_1\over 40} (2\cdim+1)^k)$ such that $g_{o,2}=g_{o,1}^{-1}$ and that for every $y=(y_1,\ldots,y_{2n},t_y)\in B(g_{o,2}, {C_1\over 20} (2\cdim+1)^k),$ we have $y_i<0$ and is equivalent to $-{3C_1\over4}(2\cdim+1)^k$. As a consequence, we see that there exist $T^{\prime}\in\tile_{-k-B_0}$ such that $T'\subset B(g_{o,1}, {C_1\over 40} (2\cdim+1)^k)$ and $T^{\prime\prime}\in\tile_{-k-B_0}$ such that
$T''\subset B(g_{o,2}, {C_1\over 20} (2\cdim+1)^k)$. Then it is clear that if $g\in T^{\prime\prime}$, $h\in T^{\prime}$, then $g_{j}-h_{j}\gtrsim \wid(T)$ $(j=1,2,\ldots,2n)$.

For general $T\in \tile_{-k}$ with $u=\cent(T)$, we know that $T = \delta_{(2\cdim+1)^k} (u) \cdot \delta_{(2\cdim+1)^k} (T_o)$. Hence, the argument holds by using the translation and dilation. This ends the proof of Lemma \ref{signlemma}.
\end{proof}

Recall the following first order Taylor's inequality on Heisenberg group from \cite{Taylorformula}.
\begin{lemma}\label{taylor}
Let $f\in C^{\infty}(\HH^{n})$, then for every $g=(x_{1},\cdots,x_{2n},t),g_{0}=(x_{0}^{1},\ldots,x_{0}^{2n},t_0)\in \HH^{n}$, we have
\begin{align*}
f(g)=f(g_{0})+\sum_{k=1}^{2n}\frac{X_{k}f(g_{0})}{k!}(x_{k}-x_{0}^{k})+R(g,g_{0}),
\end{align*}
where the remainder  $R(g,g_{0})$ satisfies the following inequality:
\begin{align*}
|R(g,g_{0})|\leq C\left(\sum_{k=1}^{2}\frac{c^{k}}{k!}\sum_{\substack{i_{1},\ldots,i_{k}\leq 2n+1,\\ I=(i_{1},\ldots,i_{k}),\ d(I)\geq 2}}\rho(g_{0}^{-1}g)^{d(I)}\sup\limits_{\rho(z)\leq c\rho(g_{0}^{-1}g)}|X^{I}f(g_{0}z)|\right)
\end{align*}
for some constant $c>0$.
\end{lemma}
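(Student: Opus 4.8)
The statement is the first-order Folland--Stein Taylor expansion specialized to $\HH^{n}$, and it is a direct instance of the general homogeneous-group Taylor formula of \cite{Taylorformula}; here I sketch the self-contained argument. The plan is: (i) reduce to an expansion near the identity by setting $w:=g_{0}^{-1}g$, so that $\rho(g_{0}^{-1}g)=\rho(w)$ is the small parameter; (ii) factor $w$ through the one-parameter subgroups generated by $X_{1},\dots,X_{2n+1}$; and (iii) run the one-variable Taylor theorem along each factor, keeping track of homogeneous weights.

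For step (ii), using the explicit multiplication law one has
\[
 w=\exp(c_{1}X_{1})\cdots\exp(c_{2n}X_{2n})\exp(c_{2n+1}X_{2n+1}),
\]
where, since the first $2n$ coordinates of a product add and $\exp(sX_{\ell})$ is the point with $\ell$-th coordinate $s$ and all other coordinates $0$, one gets $c_{\ell}=x_{\ell}-x_{0}^{\ell}$ for $1\le\ell\le 2n$, while $c_{2n+1}$ is the $t$-coordinate of $w$ modified by a quadratic polynomial in $(c_{1},\dots,c_{2n})$; in particular $|c_{\ell}|\lesssim\rho(w)$ for $\ell\le 2n$ and $|c_{2n+1}|\lesssim\rho(w)^{2}$. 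Set $g_{0}^{(0)}:=g_{0}$ and $g_{0}^{(k)}:=g_{0}\exp(c_{1}X_{1})\cdots\exp(c_{k}X_{k})$, so $g_{0}^{(2n+1)}=g$. Telescoping $f(g)-f(g_{0})=\sum_{k=1}^{2n+1}\big(f(g_{0}^{(k)})-f(g_{0}^{(k-1)})\big)$ and applying, on the $k$-th step, the one-variable Taylor theorem to $\phi_{k}(s):=f(g_{0}^{(k-1)}\exp(sX_{k}))$ together with the left-invariance identity $\phi_{k}'(s)=(X_{k}f)(g_{0}^{(k-1)}\exp(sX_{k}))$, I would expand to second order when $k\le 2n$ (main term $c_{k}(X_{k}f)(g_{0}^{(k-1)})$, remainder $\tfrac{1}{2}c_{k}^{2}(X_{k}^{2}f)(\cdot)$) and only to first order when $k=2n+1$ (a single term $c_{2n+1}(X_{2n+1}f)(\cdot)$, already $O(\rho(w)^{2})$).

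It remains to replace $g_{0}^{(k-1)}$ by $g_{0}$ in each main term: $(X_{k}f)(g_{0}^{(k-1)})-(X_{k}f)(g_{0})$ is again a telescoping sum of first-order increments along $\exp(c_{1}X_{1}),\dots,\exp(c_{k-1}X_{k-1})$, each of the form $c_{j}(X_{j}X_{k}f)(\cdot)$, so multiplying back by $c_{k}$ produces error terms $c_{k}c_{j}(X_{j}X_{k}f)(\cdot)$. All the error terms that occur are thus indexed by words $I$ of length $1$ (namely $I=(2n+1)$, from the $k=2n+1$ step) or length $2$, always with $d(I)\ge 2$, and each is bounded by $\rho(w)^{d(I)}\sup|X^{I}f|$; moreover every intermediate point appearing is of the form $g_{0}z$ with $z$ a partial product of the $\exp(c_{j}X_{j})$ (possibly with its last exponential factor evaluated at an intermediate parameter), hence $\rho(z)\lesssim\rho(w)$, which places all the suprema over $\{\rho(z)\le c\rho(g_{0}^{-1}g)\}$. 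Up to the combinatorial constants recorded in \cite{Taylorformula}, this is exactly the claimed identity and remainder estimate.

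The only ``difficulty'' is bookkeeping: one must verify, from the explicit law, the coordinate identity $c_{\ell}=x_{\ell}-x_{0}^{\ell}$ and the scaling $|c_{2n+1}|\lesssim\rho(w)^{2}$, and confirm that all intermediate points stay in a gauge ball of radius $\lesssim\rho(w)$ about $g_{0}$. The weighted degree $d(I)$ rather than $|I|$ is forced precisely because the vertical coordinate $c_{2n+1}$ scales like $\rho(w)^{2}$, so the lone vertical derivative $X_{2n+1}$ enters the remainder at weight $2$ rather than weight $1$.
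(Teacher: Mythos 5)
The paper does not prove this lemma: it is quoted verbatim from Bonfiglioli's general Taylor formula for homogeneous groups (\cite{Taylorformula}), so there is no in-paper argument to compare against. Your self-contained sketch is a correct and fairly standard way to prove the Heisenberg case directly: set $w=g_0^{-1}g$, factor $w=\exp(c_1X_1)\cdots\exp(c_{2n+1}X_{2n+1})$ (which on $\HH^n$ is an explicit, globally valid factorization since the first $2n$ coordinates of a product add and $\exp(sX_{2n+1})$ only shifts $t$), telescope through the intermediate points $g_0^{(k)}$, apply the one-variable Taylor/mean-value theorem along each flow using the left-invariance identity $\frac{d}{ds}f(h\exp(sX_k))=(X_kf)(h\exp(sX_k))$, and finally telescope again to move the evaluation point of the linear term from $g_0^{(k-1)}$ back to $g_0$. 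The coordinate identifications $c_\ell=x_\ell-x_0^\ell$ for $\ell\le 2n$ and the scaling bounds $|c_\ell|\lesssim\rho(w)$, $|c_{2n+1}|\lesssim\rho(w)^2$ are exactly what is needed, and since every intermediate evaluation point is $g_0$ times a product of at most $2n+1$ factors each of homogeneous norm $\lesssim\rho(w)$, the quasi-triangle inequality places all the suprema over $\{\rho(z)\le c\rho(g_0^{-1}g)\}$ as claimed. The resulting error terms are indexed precisely by $I=(2n+1)$ and $|I|=2$ with $d(I)\ge 2$, matching the stated remainder bound. The tradeoff relative to the paper's citation is exactly what you would expect: your argument is elementary and transparent but special to $\HH^n$ (it exploits the global exponential coordinates and the simple group law), while Bonfiglioli's theorem is proved uniformly for all Carnot groups.

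One small discrepancy worth flagging: your derivation correctly produces the first-order term $\sum_{k=1}^{2n}(x_k-x_0^k)\,X_kf(g_0)$ with no $1/k!$, whereas the lemma as transcribed in the paper carries a spurious $1/k!$ in each summand. This is evidently a typo in the paper's restatement of \cite{Taylorformula} (for a single-index multi-index $I$, $I!=1$); it is harmless for the paper's application in Lemma~\ref{lowerbound}, which only uses the sizes $|X_jb(c_T)|$ up to a constant, but your formula is the correct one and you might have noted the mismatch explicitly rather than silently dropping the factorial.
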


We denote $\nabla$ be the horizontal gradient of $\HH^{n}$ defined by $\nabla f:=(X_{1}f,\cdots,X_{2n}f)$. Then we can show a lower bound for a local pseudo-oscillation of the symbol $b$ in the commutator.

\begin{lemma}\label{lowerbound}
Let $b\in C^{\infty}(\HH^n)$. Assume that there is a point $g_{0}\in\HH^{n}$ such that $\nabla b(g_{0})\neq 0$. Then there exist $C>0$, $\varepsilon>0$ and $N>0$ such that if $k>N$, then for any tile $T\in \tile_{-k}$ satisfying $d(\cent(T),g_{0})<\varepsilon$, one has
\begin{align}  \label{e:lowerbound}
\Bigl\lvert 
\fint _{T'} b 
-  
\fint _{T''} b 
\Bigr\rvert  \geq C \wid(T)|\nabla b(g_{0})|.
\end{align}
Above,  $T^{\prime}$ and $ T''$ are the tiles chosen in Lemma \ref{signlemma}.
\end{lemma}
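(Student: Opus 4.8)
The plan is to use the first order Taylor expansion of $b$ on the Heisenberg group (Lemma \ref{taylor}) around the point $g_0$, combined with the sign-control on coordinate differences supplied by Lemma \ref{signlemma}. Fix $a_j := \operatorname{sgn}(X_j b(g_0))$ for each $j$ with $X_j b(g_0)\neq 0$ (and make an arbitrary choice, say $a_j=1$, when $X_jb(g_0)=0$), and let $T'$, $T''$ be the subtiles of $T$ produced by Lemma \ref{signlemma} for this choice of signs. The point of this choice is that for $g\in T''$ and $h\in T'$ the quantity $a_j(g_j-h_j)$ is comparable to $\wid(T)$, so summing against $X_jb(g_0)$ produces a \emph{coherent}, non-cancelling main term of size $\approx \wid(T)\,|\nabla b(g_0)|$.

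First I would write, for $g\in T$,
\begin{align*}
b(g) = b(g_0) + \sum_{j=1}^{2n} \frac{X_j b(g_0)}{j!}(x_j - x_0^j) + R(g,g_0).
\end{align*}
Averaging this identity over $T'$ and over $T''$ and subtracting, the constant term $b(g_0)$ cancels and one is left with
\begin{align*}
\fint_{T'} b - \fint_{T''} b = \sum_{j=1}^{2n}\frac{X_jb(g_0)}{j!}\Bigl(\fint_{T'}x_j - \fint_{T''}x_j\Bigr) + \fint_{T'}R(\cdot,g_0) - \fint_{T''}R(\cdot,g_0).
\end{align*}
By Lemma \ref{signlemma} (applied with the signs $a_j$), each term $\frac{X_jb(g_0)}{j!}\bigl(\fint_{T'}x_j-\fint_{T''}x_j\bigr) = \frac{|X_jb(g_0)|}{j!}\cdot a_j\bigl(\fint_{T'}x_j-\fint_{T''}x_j\bigr)$ is non-negative and, for the indices $j$ where $X_jb(g_0)\neq0$, bounded below by $c\,|X_jb(g_0)|\,\wid(T)$; there is no cancellation among the $2n$ terms, so the main term is $\gtrsim \wid(T)\,|\nabla b(g_0)|$.

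The main obstacle — and the reason for the hypotheses "$k>N$" and "$d(\cent(T),g_0)<\varepsilon$" — is controlling the remainder. From Lemma \ref{taylor}, $|R(g,g_0)|$ is bounded by a sum over multi-indices $I$ with $d(I)\ge 2$ of $\rho(g_0^{-1}g)^{d(I)}\sup_{\rho(z)\le c\rho(g_0^{-1}g)}|X^Ib(g_0z)|$. Since $b\in C^\infty$, on a fixed compact neighbourhood of $g_0$ these derivatives are bounded by some constant $M$ depending on $b$ and $g_0$; moreover for $g\in T\in\tile_{-k}$ with $d(\cent(T),g_0)<\varepsilon$ one has $\rho(g_0^{-1}g)\lesssim \varepsilon$ and, crucially, $d(g,\cent(T))\lesssim \wid(T) = (2n+1)^{-k}$, so $\rho(g_0^{-1}g)$ is comparable to $d(g_0,\cent(T))$ up to an error of order $\wid(T)$. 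The lowest-order remainder terms have $d(I)\ge 2$, hence contribute at most $O\bigl(M\,(\varepsilon+\wid(T))^2\bigr) = O\bigl(M\varepsilon^2 + M\wid(T)^2\bigr)$ — wait, that is not quite small relative to $\wid(T)$ unless we are more careful. The correct observation is that the oscillation $\fint_{T'}R - \fint_{T''}R$ involves the \emph{variation} of $R$ over a set of diameter $\lesssim\wid(T)$, and using the gradient bound $|\nabla R(g,g_0)| \lesssim M\rho(g_0^{-1}g) \lesssim M(\varepsilon + \wid(T))$ one gets $|\fint_{T'}R - \fint_{T''}R| \lesssim M(\varepsilon+\wid(T))\wid(T)$. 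Therefore, choosing first $\varepsilon$ small enough that $M\varepsilon \le \tfrac{c}{4}|\nabla b(g_0)|$ (possible since $\nabla b(g_0)\ne0$, after shrinking the neighbourhood so $M$ is a fixed local bound) and then $N$ large enough that $M(2n+1)^{-N}\le\tfrac{c}{4}|\nabla b(g_0)|$, the remainder is absorbed into half the main term, yielding \eqref{e:lowerbound} with a suitable $C$. The only delicate point to get right is tracking that all the constants ($c$ from Lemma \ref{signlemma}, the Taylor constant $c$, and the local sup-bound $M$) depend only on $n$, $b$ and the fixed point $g_0$, and not on $k$ or on which tile $T$ near $g_0$ is chosen — this is what makes the uniform threshold $N$ and radius $\varepsilon$ legitimate.
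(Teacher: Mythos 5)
Your proposal takes a genuinely different expansion point from the paper, and this is where it runs into trouble. You Taylor-expand $b$ around the \emph{fixed} point $g_0$, whereas the paper expands around $c_T=\cent(T)$. With the paper's choice, $\rho(c_T^{-1}g)\lesssim\wid(T)$ for every $g\in T$, so Lemma \ref{taylor} immediately gives $|R(g,c_T)|\lesssim\wid(T)^2\cdot\sup_I\|X^Ib\|_{L^\infty(B(g_0,1))}$ — a term that is trivially $o(\wid(T))$ as $k\to\infty$, and the proof closes in one line. The signs $a_j=\operatorname{sgn}(X_jb(c_T))$ are then used, and the final lower bound $\sum_j|X_jb(c_T)|\gtrsim|\nabla b(g_0)|$ follows from continuity of $\nabla b$ for $\varepsilon$ small. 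With your choice of $g_0$ as the base point, $\rho(g_0^{-1}g)\approx\varepsilon$ rather than $\wid(T)$, so the remainder bound from Lemma \ref{taylor} is only $O(\varepsilon^2)$, which does \emph{not} vanish relative to the main term $\wid(T)|\nabla b(g_0)|$ as $k\to\infty$. You noticed this, which is good.

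Your proposed fix — that $\fint_{T'}R - \fint_{T''}R$ depends only on the oscillation of $R$ over a set of diameter $\lesssim\wid(T)$, and using $|\nabla R(\cdot,g_0)|\lesssim M\rho(g_0^{-1}\cdot)$ — is the right idea in spirit, but there is a real gap: the gradient estimate on the remainder is simply asserted, and it is \emph{not} a consequence of Lemma \ref{taylor} as stated. You would need to derive it separately (e.g.\ by differentiating the definition of $R$ and applying a mean-value estimate, noting that $X_jR(\cdot,g_0)$ vanishes at $g_0$). Moreover, with the Taylor formula written exactly as in Lemma \ref{taylor} — with the $\frac{1}{k!}$ weights on the first-order terms — one computes $X_kR(g_0,g_0)=X_kb(g_0)(1-1/k!)\neq0$ for $k\geq2$, so the asserted gradient bound actually \emph{fails} unless those factorials are discounted (as the paper implicitly does by absorbing $1/j!$ into constants). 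In short: your route can probably be made to work, but it requires proving an extra gradient estimate that the paper's expansion-at-$c_T$ approach avoids entirely, and as written your proposal leaves that step unjustified.
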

\begin{proof}
Denote $c_{T}:=\cent(T):=\{c_{T}^{1},\ldots,c_{T}^{2n},t_T\}$ and $g=(g_1,\cdots,g_{2n},t)$, then by Lemma \ref{taylor},
\begin{equation}\label{e:b}
b(g)=b(c_{T})+\sum_{j=1}^{2n}\frac{X_{j}b(c_{T})}{j!}(g_{j}-c_{T}^{j})+R(g,c_{T}), 
\end{equation}
where the remainder term $R(g,c_{T})$ satisfies
\begin{align*}
|R(g,c_{T})|\leq C\left(\sum_{j=1}^{2}\frac{c^{j}}{j!}\sum_{\substack{i_{1},\ldots,i_{j}\leq 2n+1,\\ I=(i_{1},\ldots,i_{j}),\ d(I)\geq 2}}\rho(c_{T}^{-1}g)^{d(I)}\sup\limits_{\rho(z)\leq c\rho(c_{T}^{-1}g)}|X^{I}b(c_{T}z)|\right).
\end{align*}
Note that the condition $\rho(z)\leq c\rho(c_{T}^{-1}g)$ implies that $d(c_{T}z,c_{T})=\rho(z)\leq c\rho(c_{T}^{-1}g)\lesssim \wid(T)$ whenever $g\in T$. Hence, if $g\in T$, then
\begin{align*}
|R(g,c_{T})| \lesssim \wid(T)^{2}\sum_{j=1}^{2}\sum_{\substack{i_{1},\ldots,i_{j}\leq 2n+1,\\ I=(i_{1},\ldots,i_{j}),\ d(I)\geq 2}}\|X^{I}b\|_{L^{\infty}(B(g_{0},1))}.
\end{align*}
For $ \epsilon = \epsilon _{b} >0$ sufficiently small, this last estimate is smaller than the right hand side of \eqref{e:lowerbound}.   
That is, in \eqref{e:b}, we are only concerned with the first two terms on the right.

Apply Lemma \ref{signlemma}, with the choice of signs $ a_j = {\rm sgn}(X_{j}b)(c_{T})$.  
Let $T' , T''$ be the tiles that this Lemma provides to us.     
For $  g'\in (g_j')  $ and $ g'' = (g''_j) \in T''$, we have 
\begin{equation*}
{\rm sgn}(X_{j}b)(c_{T})(g_{j}-h_{j})\gtrsim \wid(T), \qquad j=1 ,\dotsc, 2n. 
\end{equation*}
Therefore, we can estimate 
\begin{align}  \label{e:TT}
\Bigl\lvert &
\fint _{T'} b (g') \; dg'  
-  
\fint _{T'} b  (g'')\; dg''  
\Bigr\rvert
\\
& \geq 
c  
\Bigl\lvert 
\fint _{T'} \fint _{T''}   \sum_{j=1}^{2n}\frac{(X_{j}b)(c_{T})}{j!}(g_{j}'-g''_{j})  \;dg' dg'' 
\Bigr\rvert 
-\fint _{T'} \lvert  R(g',c_{T})\rvert  \; dg'     
- \fint _{T''} \lvert  R(g'',c_{T})\rvert  \; dg''     
\\
& \geq 
c \sum_{j=1}^{2n}|X_{j}b(c_{T})|\wid(T)-C\wid(T)^{2}\sum_{j=1}^{2}\sum_{\substack{i_{1},\ldots,i_{j}\leq 2n+1,\\ I=(i_{1},\ldots,i_{j}),\ d(I)\geq 2}}\|X^{I}b\|_{L^{\infty}(B(g_{0},1))}\\
& \gtrsim  C\wid(T)|\nabla b(g_{0})|.  
\end{align}
This inequality completes the Lemma. 
%
\end{proof}

\begin{lemma}\label{const}
A function $b\in L_{{\rm loc}}^{1}(\HH^{n})$ is constant if  
\begin{align} \label{e:Bsup}
\sup\limits_{h\in B(o,1)}\left\| \left\{ \fint_{T}\fint _T 
\left|  E _{k + B_0}  ( \tau ^{h}b)(g')-  E _{k+B_0} (\tau ^{h}b) (g'')    \right| dg'dg''\right\}_{T\in\tile}\right\|_{\ell^{2n+2}}<+\infty. 
\end{align}
(In the display, $ T\in \tile_k$, and both $ T$ and $ k$ vary. And $ \tau ^{h}$ denotes translation by $ h$.)   
\end{lemma}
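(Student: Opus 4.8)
The plan is to show that the hypothesis \eqref{e:Bsup} forces $\nabla b = 0$ in the distributional sense, from which $b$ is constant (the Heisenberg group being connected). First I would reduce to the smooth case: if $b\in L^1_{\rm loc}(\HH^n)$ satisfies \eqref{e:Bsup}, then so does $b*\varphi$ for any standard mollifier $\varphi$, because convolution with $\varphi$ commutes with translation $\tau^h$ and with the conditional expectations up to harmless error, and it does not increase the $\ell^{2n+2}$-type quantity (one averages the bound in $h$ over a small ball, using that $\sup_h$ is finite and that the sup over $B(o,1)$ dominates the relevant translates). So it suffices to prove the statement for $b\in C^\infty(\HH^n)$, and then let the mollification parameter tend to $0$ at the end.

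Next, assuming $b\in C^\infty$ and that $\nabla b(g_0)\neq 0$ at some point $g_0$, I would derive a contradiction with finiteness of \eqref{e:Bsup}. Take $h = o$ (the identity), so $\tau^h b = b$. By Lemma~\ref{lowerbound} there are $C,\varepsilon,N>0$ so that for every $k>N$ and every tile $T\in\tile_{-k}$ with $d(\cent(T),g_0)<\varepsilon$, the tiles $T',T''$ produced by Lemma~\ref{signlemma} satisfy
\begin{align*}
\Bigl| \fint_{T'} b - \fint_{T''} b \Bigr| \geq C\,\wid(T)\,|\nabla b(g_0)|.
\end{align*}
Now I would relate $\fint_{T'} b$ and $\fint_{T''} b$ to the quantity appearing in \eqref{e:Bsup}. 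Since $T',T''\in\tile_{-k-B_0}$, the function $E_{k+B_0}(b)$ (with the index convention matching $\sigma(\tile_{-k-B_0})$) is constant on each of $T'$ and $T''$, equal to $\fint_{T'} b$ and $\fint_{T''} b$ respectively; hence for $g'\in T'$, $g''\in T''$ we have $|E_{k+B_0}(b)(g') - E_{k+B_0}(b)(g'')| = |\fint_{T'} b - \fint_{T''} b| \geq C\wid(T)|\nabla b(g_0)|$. Averaging this over $g',g''\in T$ (using $T',T''\subset T$, and that $|T'|,|T''|\gtrsim |T|$ since $B_0$ is fixed, so the average over $T\times T$ of the integrand is $\gtrsim$ its value on $T'\times T''$ times $|T'||T''|/|T|^2 \gtrsim 1$) gives
\begin{align*}
\fint_{T}\fint_{T} \bigl| E_{k+B_0}(b)(g') - E_{k+B_0}(b)(g'')\bigr|\, dg'\,dg'' \;\gtrsim\; \wid(T)\,|\nabla b(g_0)| \;=\; (2n+1)^{-k}\,|\nabla b(g_0)|.
\end{align*}

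Finally I would count: for each $k>N$, the number of tiles $T\in\tile_{-k}$ with $d(\cent(T),g_0)<\varepsilon$ is comparable to $\varepsilon^{2n+2}(2n+1)^{(2n+2)k}$, since these tiles are approximately Heisenberg balls of radius $(2n+1)^{-k}$ partitioning a fixed neighborhood of $g_0$. Each contributes at least $c\,[(2n+1)^{-k}|\nabla b(g_0)|]^{2n+2}$ to the $(2n+1)$-power sum $\sum_T (\cdots)^{2n+2}$, so the contribution from level $-k$ is bounded below by $c\,\varepsilon^{2n+2}(2n+1)^{(2n+2)k}\cdot (2n+1)^{-(2n+2)k}|\nabla b(g_0)|^{2n+2} = c\,\varepsilon^{2n+2}|\nabla b(g_0)|^{2n+2}$, a \emph{fixed positive number independent of $k$}. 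Summing over the infinitely many $k>N$ forces the $\ell^{2n+2}$ quantity in \eqref{e:Bsup} (for $h=o$, hence a fortiori the supremum over $B(o,1)$) to be infinite, contradicting the hypothesis. Therefore $\nabla b \equiv 0$ on $C^\infty$; undoing the mollification, $\nabla b = 0$ distributionally, so $b$ is constant. The main obstacle is the bookkeeping around the conditional-expectation index conventions and confirming that the mollification step genuinely preserves \eqref{e:Bsup} uniformly in the mollification parameter; the counting argument itself is robust once the per-tile lower bound from Lemma~\ref{lowerbound} is in hand.
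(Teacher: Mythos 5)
Your proposal is correct and follows essentially the same route as the paper's proof: mollify $b$ (using the supremum over translates $h\in B(o,1)$ in \eqref{e:Bsup} together with Minkowski's integral inequality to keep the $\ell^{2n+2}$ quantity finite for $b_\epsilon$), locate a point $g_0$ with $\nabla b_\epsilon(g_0)\neq 0$ if $b_\epsilon$ is non-constant, invoke Lemmas \ref{signlemma} and \ref{lowerbound} to obtain the per-tile lower bound $\gtrsim \wid(T)$ for tiles near $g_0$, and then sum: each scale $-k>N$ contributes a bounded-below amount to the $(2n+2)$-power sum because the $\approx(2n+1)^{k(2n+2)}$ tiles near $g_0$ each contribute $\gtrsim \wid(T)^{2n+2}$, forcing divergence. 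The one place your write-up is a bit loose — the claim that mollification "commutes with the conditional expectations up to harmless error" — is not literally true (the tile grid is not translation-invariant), but you immediately give the correct fix (average $E_{k+B_0}(\tau^{\tilde h}b)$ against $\psi_\epsilon(\tilde h)\,d\tilde h$ and pull the $\ell^{2n+2}$ norm inside), so the argument goes through.
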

\begin{proof}
The assumption is that $ b\in L_{{\rm loc}}^{1}(\HH^{n})$, but the previous Lemmas require $ b$ to be smooth.  
Denote $\psi_{\epsilon}(g):=\epsilon^{-2n-2}\psi(\delta_{\epsilon^{-1}}g)$, where $\psi$ is a smooth  compactly supported bump function which integrates to zero, and $\epsilon$ is a small positive constant. 
Then, $b _{\epsilon } = b\ast \psi_{\epsilon}$ is smooth.  We argue that these are all constant. And, they converge to $ b$ pointwise so this is sufficient.  

The point is that $ b _{\epsilon }$ is smooth, and that with the supremum on the outside in \eqref{e:Bsup}, we have 
\begin{equation}  \label{e:norm}
 \left\| \left\{ \fint_{T}\fint _T 
\left|  E _{k + B_0}  (b _{\epsilon })(g')-  E _{k+B_0}(b _{\epsilon })(g'')    \right| dg'dg''\right\}_{T\in\tile}\right\|_{\ell^{2n+2}} < \infty .  
\end{equation}


If $ b _{\epsilon }$ is not constant, we argue that the norm above is actually infinite, which is a contradiction.  
It follows from \cite[Proposition 1.5.6]{Liebook} that there exists a point $g_{0}\in \HH^{n}$ such that $\nabla b\ast\psi_{\epsilon}(g_{0})\neq 0$. But then,  Lemma \ref{lowerbound} applies.  
There exist  $\varepsilon>0$ and $N>0$ such that if $k>N$, then for any tile $T\in \tile_{-k}$ satisfying $d(\cent(T),g_{0})<\varepsilon$,
\begin{align*}
\fint_{T}\fint _T 
\left|  E _{k + B_0}  (b _{\epsilon })(g')-  E _{k+B_0}(b _{\epsilon })(g'')    \right| dg'dg''  \gtrsim \wid (T).  
\end{align*}
Note that for $k>N$, the number of $T\in\tile_{-k}$ and $d(\cent(T),g_{0})<\varepsilon$ is at least
\begin{equation*}
c(2n+1)^{k(2n+2)} \simeq \wid (T) ^{- (2n+2)}.  
\end{equation*}
But then, it is clear that the norm in \eqref{e:norm} is infinite. 
\end{proof}

\begin{proposition}\label{mainprop}
Suppose  $b\in  L^1_{{\rm loc}}(\HH^n)$ and $p=2n+2$. Then
for any $\ell\in \{1,2,\cdots,2n\}$, the commutator $[b,R_{\ell}]\in S^{2n+2}$ if and only if $b$ is a constant.
\end{proposition}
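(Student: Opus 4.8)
The sufficiency is immediate, since $[b,R_\ell]=0$ when $b$ is a constant. For the necessity I plan to assume $\|[b,R_\ell]\|_{S^{2n+2}}<\infty$ and verify the hypothesis \eqref{e:Bsup} of Lemma~\ref{const}, which then forces $b$ to be a constant.

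The first step is to dispose of the supremum over $h\in B(o,1)$ by the left-invariance of $R_\ell$. Writing $U_h$ for the unitary operator $f\mapsto f(h^{-1}\,\cdot\,)$ on $L^2(\HH^n)$, left-invariance gives $R_\ell U_h=U_hR_\ell$, whence $[\tau^h b,R_\ell]=U_h[b,R_\ell]U_h^{-1}$ and $\|[\tau^h b,R_\ell]\|_{S^{2n+2}}=\|[b,R_\ell]\|_{S^{2n+2}}$ for every $h$. Thus it suffices to prove, for an arbitrary $f\in L^1_{{\rm loc}}(\HH^n)$, the uniform bound
\[
\sum_{k\in\ZZ}\ \sum_{T\in\tile_{-k}}\left(\fint_T\fint_T\bigl|E_{k+B_0}(f)(g')-E_{k+B_0}(f)(g'')\bigr|\,dg'\,dg''\right)^{2n+2}\ \lesssim\ \|[f,R_\ell]\|_{S^{2n+2}}^{2n+2},
\]
with implied constant depending only on $n$ and $B_0$; applying it to $f=\tau^h b$ and combining with the displayed norm identity then yields \eqref{e:Bsup}.

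The remaining estimate I would obtain by a telescoping argument feeding into Lemma~\ref{step1}. For fixed $k$ and $T\in\tile_{-k}$, since $E_{k+B_0}(f)$ is constant on the sub-tiles of $T$ lying in $\tile_{-k-B_0}$ and its average over $T$ equals $E_k(f)|_T$, one has $\fint_T\fint_T|E_{k+B_0}(f)(g')-E_{k+B_0}(f)(g'')|\,dg'\,dg''\le 2\fint_T|E_{k+B_0}(f)-E_k(f)|$. I would then write $E_{k+B_0}(f)-E_k(f)=\sum_{m=0}^{B_0-1}(E_{k+m+1}(f)-E_{k+m}(f))$, apply the triangle inequality, Jensen's inequality, and the elementary bound $(\sum_{m<B_0}a_m)^{2n+2}\le B_0^{2n+1}\sum_{m<B_0}a_m^{2n+2}$, and sum over $T\in\tile_{-k}$ (all tiles of $\tile_{-k}$ have the common measure $\approx(2n+1)^{-(2n+2)k}$) and over $k\in\ZZ$, reducing the left-hand side to a constant times
\[
\sum_{m=0}^{B_0-1}\ \sum_{k\in\ZZ}(2n+1)^{(2n+2)k}\bigl\|E_{k+m+1}(f)-E_{k+m}(f)\bigr\|_{2n+2}^{2n+2}.
\]
Re-indexing $k\mapsto k-m$ in each inner sum extracts a factor $(2n+1)^{-(2n+2)m}$ and leaves exactly $\sum_{k}(2n+1)^{(2n+2)k}\|E_{k+1}(f)-E_{k}(f)\|_{2n+2}^{2n+2}$, which is $\lesssim\|[f,R_\ell]\|_{S^{2n+2}}^{2n+2}$ by Lemma~\ref{step1} (valid since $2n+2>1$). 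Summing the finitely many $m$'s (a convergent geometric-type series) finishes the displayed bound, and Lemma~\ref{const} then completes the proof.

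I do not anticipate a genuine obstacle here: the substantive content is already packaged into Lemma~\ref{step1} (the \emph{NWO}/martingale-difference upper bound in terms of the Schatten norm) and Lemma~\ref{const} (which upgrades finiteness of the martingale oscillation to constancy, using the lower bounds of Lemmas~\ref{signlemma}--\ref{lowerbound}). The only delicate points are bookkeeping: matching the scaling conventions ($\tile_{-k}$ against the index on $E_{k+B_0}$, and the common measure of tiles at a fixed level), and checking that the fixed finite shift by $B_0$ levels costs only a constant — which it does, since each of the $B_0$ terms is discounted by $(2n+1)^{-(2n+2)m}$.
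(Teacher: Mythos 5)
Your proof is correct and follows essentially the same route as the paper: reduce the translation supremum via unitary conjugation and convolution‐invariance of the Schatten norm, reduce the double oscillation $\fint_T\fint_T|E_{k+B_0}-E_{k+B_0}|$ to $\fint_T|E_{k+B_0}-E_k|$ (the paper phrases this as the elementary remark $\|Z-\mathbb{E}Z\|_p\simeq\|Z-Z'\|_p$), telescope over the fixed shift $B_0$, and invoke Lemma~\ref{step1} followed by Lemma~\ref{const}. Your version simply makes explicit the telescoping and re-indexing that the paper compresses into ``as $B_0$ is a fixed integer.''
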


\begin{proof}
A constant function $ b$ is associated with the zero commutator. So, we only consider the 
direction in which we assume $[b,R_{\ell}]\in S^ {2n+2}$.   And, then, we need to verify that \eqref{e:Bsup} holds.  
That inequality has the supremum over translations.  The Riesz transforms are themselves convolution operators, 
so that it suffices to verify \eqref{e:Bsup} without translations.  That is, 
\begin{equation}   \label{e:intint}
\left\| \left\{ \fint_{T}\fint _T 
\left|  E _{k + B_0} (b)(g')-  E _{k+B_0}(b)(g'')    \right| dg'dg''\right\}_{T\in\tile}\right\|_{\ell^{2n+2}} 
\lesssim \lVert [ b, R _{\ell}]\rVert _{S ^{2n+2}} < \infty .  
\end{equation}
(In the display, $ T\in \tile_k$, and both $ T$ and $ k$ vary).

This is in fact a corollary to Lemma~\ref{step1}, and is seen by way of a general remark. 
For a random variable $ Z$, we have for $ 1\leq p < \infty $, 
\begin{equation*}
 \lVert Z - \mathbb E Z  \rVert_p  \simeq  \lVert Z - Z'\rVert_p, 
\end{equation*}
where $ Z'$ is an independent copy of $ Z$.  Indeed, 
\begin{align*}
 \lVert Z - \mathbb E Z  \rVert_p  & =  \lVert Z - \mathbb E Z ' \rVert_p 
 \\
 & \leq   \lVert Z - Z'\rVert_p  \leq 2  \lVert Z - \mathbb E Z  \rVert_p. 
\end{align*}
The first inequality is by convexity and the second by the triangle inequality.    

Thus, Lemma~\ref{step1} implies 
\begin{equation*}
\left\| \left\{ \fint_{T} 
\left|  E _{k + B_0} (b)(g)-  E _{k} (b)(g)     \right|\,dg \right\}_{T\in\tile}\right\|_{\ell^{2n+2}} 
\lesssim \lVert [ b, R _{\ell}]\rVert _{S ^{2n+2}} 
\end{equation*}
as $ B_0$ is a fixed integer.  And then \eqref{e:intint} follows.  
\end{proof}

\section{Applications}\label{appl}
\setcounter{equation}{0}

As stated in the introduction, our approach depends upon a standard non-degeneracy condition on the kernel of the 
singular integral operator, and then on robust real variable techniques.  (In particular, no Fourier analysis.) 
The approach applies to the following non-Euclidean Calder\'on--Zygmund operators. 

\smallskip 

(1)  The Cauchy--Szeg\H{o} projection $\mathcal{C}$ \cite[Chapter 12, Section 2.4]{St} is an important singular integral on $\HH^n$. 
It recovers   an analytic function in the Siegel upper half space  from its boundary value.
Its restriction to the boundary is a convolution operator, that is, $\mathcal{C}(f)(g)=\int_{\HH^n} f (g') k_{CS}((g')^{-1}g)dg'$, and the convolution kernel $k_{CS}$ is given by
\begin{equation}\label{Cauchy Szego}
k_{CS}(g)= \frac{c}{(|z|^2+\i\, t)^{\cdim+1}},\quad \i^2=-1, 
\qquad\forall g=(z,t)\in \HH^n.
\end{equation}
It is well-known that this $k_{CS}$ is a Calder\'on--Zygmund kernel. From the explicit kernel, we see that the non-degeneracy condition in our Theorem \ref{nondegen} holds for $\mathcal{C}$.  Hence,  Theorem \ref{schatten} holds for $[b, \mathcal{C}]$. This recovers the Theorem A obtained by Feldman--Rochberg \cite{FR} where they relied on the Cayley transform and Fourier transform.

\smallskip 

(2) Second order Riesz transforms appear naturally in the study of PDEs (see for instance \cite{GT}) and have been extensively studied in literature. They are mostly interpreted as iterations of Riesz transforms and their adjoints, or second derivatives of the fundamental solution operator for the Laplacian: $\partial_i\partial_j (-\Delta)^{-1}$. On Euclidean spaces, second order Riesz transforms are well understood as Calder\'on--Zygmund singular integrals and have bounded $L^p$ norm for $1<p<\infty$.

\smallskip
\ \  (2a)\ \ A particular interesting example is the classical Beurling--Ahlfors operator $\mathcal{B}$ on the complex plane defined by (see for example \cite{BW,PV})
%
%
\begin{align*}
\mathcal{B}(f)(z)&= {\rm p.v.} {1\over \pi} \int_{\C} {f(w) \over \big( z-w\big)^2} \ dw.
\end{align*}
Equivalently, we have $$\mathcal{B} = \partial^2 (-\Delta)^{-1}, $$
where $\partial= {\partial\over \partial x_1}- \i{\partial\over \partial x_2}$ is the Cauchy--Riemann operator and $\Delta$ is the Laplacian on $\mathbb R^2$.

Note that the kernel of $\mathcal B$ is  homogeneous and smooth away from the diagonal. Hence, the Schatten class $[b,\mathcal B]$ was covered by Rochberg--Semmes \cite{RS}. Our approach can also be applied to  $[b,\mathcal B]$, to have the explicit quantitative estimate for the Schatten norm.

\smallskip
\ \  (2b)\ \ Second order Riesz transform $\mathcal T (-\Delta_\HH)^{-1}$ on $\HH^n$ (recall that $\mathcal T = {1\over 4} (X_j X_{n+j} - X_{n+j} X_{j})$).

By using functional calculus for $(-\Delta_\HH)^{-1}$, it is direct to see that
$$ \mathcal T (-\Delta_\HH)^{-1}=  \int_0^\infty \mathcal T e^{h \Delta_\HH} \, dh,  $$
which gives that the kernel $K$ of $\mathcal T (-\Delta_\HH)^{-1}$ is a convolution kernel. Together with the size and smoothness estimates for the heat kernel \cite{VSC}, we obtain that for $g\not=[0,0]$,
$$ |K(g)|\lesssim {1\over \rho(g)^{2n+2}}\quad{\rm and}\quad  |X_\ell K(g)|\lesssim {1\over \rho(g)^{2n+3}} $$ for $\ell =1,2,...,2n$. Hence, $\mathcal T (-\Delta_\HH)^{-1}$ is a Calder\'on--Zygmund operator on $\HH^n$. We now verify the
non-degeneracy condition in our Theorem \ref{nondegen}.

We follow the idea in Section 7 in \cite{DLLWW}.
Recall that (\cite{G77}, see also Section 3.1) the explicit expression of heat kernel on the Heisenberg group $\mathbb H^n$ is
as follows: for $g=[z,t]\in\HH^n$,
\begin{align}\label{heat kernel ph}
p_h(g)={1\over 2(4\pi h)^{n+1}}\int_{\mathbb R}\exp\Big({\lambda\over 4h}\big(\i t-|z|_{\mathbb C^n}^2\coth\lambda\big)\Big)\Big({\lambda\over\sinh\lambda}\Big)^nd\lambda,\quad \i^2=-1.
\end{align}
For any $g=[z,t]\in\mathbb H^n$,  by using the explicit expression of the heat kernel above and by Fubini's theorem, we have that for $g\not=[0,0]$,
\begin{align*}
K(g)&={1\over 2(4\pi )^{n+1}}{\partial\over\partial t}\int_{0}^{+\infty}h^{-n-1}\int_{\mathbb R}\exp\Big({\lambda\over 4h}\big(\i t-|z|_{\mathbb C^n}^2\coth\lambda\big)\Big)\Big({\lambda\over\sinh\lambda}\Big)^n\ d\lambda\ dh\\
&={1\over 2(4\pi )^{n+1}}{\partial\over\partial t}\int_{\mathbb R}\,\int_{0}^{+\infty}h^{-n-1}\exp\Big({\lambda\over 4h}\big(\i t-|z|_{\mathbb C^n}^2\coth\lambda\big)\Big)dh\ \Big({\lambda\over\sinh\lambda}\Big)^nd\lambda\\
&=C_1{\partial\over\partial t}\int_{\mathbb R}\big(|z|_{\mathbb C^n}^2\lambda\coth\lambda-\i\lambda t\big)^{-n}\Big({\lambda\over\sinh\lambda}\Big)^nd\lambda\\
&=C_2  \int_{\mathbb R}\big(|z|_{\mathbb C^n}^2\lambda\coth\lambda-\i\lambda t\big)^{-n-1}\Big({\lambda\over\sinh\lambda}\Big)^n \lambda\ d\lambda,
\end{align*}
where in the next to the last equality we applied Cauchy integral formula to deform the ray on right-half complex plane $\mathbb{C}_{+}$ into the real axis. Here we also note that
\begin{align}\label{C1C2}
C_{2}=-n\, \i C_{1}=-\frac{n\, \i}{8\pi^{n+1}}\int_{0}^{\infty}s^{-n-1}e^{-s^{-1}}ds\neq 0.
\end{align}
Observe that
\begin{align*}
|z|_{\mathbb C^n}^2\lambda\coth\lambda-\i\lambda t
&={\lambda\over \sinh\lambda}d_K^2(g)\bigg({|z|_{\mathbb C^n}^2\over d_K^2(g)}\cosh\lambda-\i{t\over d_K^2(g)}\sinh\lambda\bigg)
={\lambda\over \sinh\lambda}d_K^2(g)\cosh(\lambda-\i\phi),
\end{align*}
where $d_K$ is the Kor\'anyi metric given by $d_K(g) = (|z|_{\mathbb C^n}^4+t^2)^{1\over 4}$ for $g=[z,t]\in \HH^n$, and
\begin{align}\label{phi}
-{\pi\over2}\leq\phi=\phi(|z|_{\mathbb C^n},t)\leq {\pi\over 2},\quad e^{\i\phi}=d_K^{-2}(g)(|z|_{\mathbb C^n}^2+\i\,t).
\end{align}
Thus, we have
\begin{align*}
K(g)
&=C_2  \int_{\mathbb R}\bigg( {\lambda\over \sinh\lambda}d_K^2(g)\cosh(\lambda-\i\phi) \bigg)^{-n-1}\Big({\lambda\over\sinh\lambda}\Big)^n \lambda\ d\lambda\\
&=C_2 d_K^{-2n-2}(g) \int_{\mathbb R}\big( \cosh(\lambda-\i\phi) \big)^{-n-1} \Big({\lambda\over \sinh\lambda}\Big)^{-n-1}\Big({\lambda\over\sinh\lambda}\Big)^n \lambda\ d\lambda\\
&=C_2 d_K^{-2n-2}(g) \int_{\mathbb R}\big( \cosh(\lambda-\i\phi) \big)^{-n-1} \ \sinh\lambda \ d\lambda\\
&=C_2 d_K^{-2n-2}(g) \int_{\mathbb R}\big( \cosh(\lambda) \big)^{-n-1} \ \sinh(\lambda+\i\phi) \ d\lambda,
\end{align*}
where the last equality follows from Cauchy integral formula again. We now define
$$
F(g):=\int_{\mathbb R}\big( \cosh(\lambda) \big)^{-n-1} \ \sinh(\lambda+\i\phi) \ d\lambda.
$$
Then we have $K(g)=C_2F(g) d_K^{-2n-2}(g) $. We now investigate the function
$$\mathfrak F(w):=\int_{\mathbb R}\big( \cosh(\lambda ) \big)^{-n-1} \ \sinh(\lambda+w) \ d\lambda, \quad w\in\mathbb C.$$
Then we have $F(g) = \mathfrak F(\i \phi)$  with $g=[z,t]\not=0$ and $\phi=\phi(|z|_{\mathbb C^n}, t)$ such that $e^{\i\phi}=d_K^{-2}(g)(|z|_{\mathbb C^n}^2+\i\,t)$. Note that
$\mathfrak F(w)$ is analytic in some domain in the complex plane $\mathbb C$, which contains the line segment $[-{\pi \over2}\i,{\pi \over2}\i]$ in the imaginary axis, and that $\mathfrak F({\pi \over4}\i)\not=0$.

Thus,
$\mathfrak F(w)$ has at most a finite number of zero points on $[-{\pi \over2}\i,{\pi \over2}\i]$, i.e., there exist $\{\phi_\ell\}_{\ell=1}^N\subset [-{\pi \over 2}, {\pi \over 2}]$ such that $\mathfrak F(\i\phi_\ell)=0$. From the mapping in \eqref{phi}, we see that for each $\ell=1,\ldots,N$,  $\phi_\ell$
 corresponds to a hyperplane $\mathcal H_\ell$ in $\HH^n$ defined by
$$ \mathcal H_\ell:=\{ (z,t)\in\HH^n:\ \phi_\ell=\phi(|z|_{\mathbb C^n},t)\}. $$
Let $$\mathcal H = \bigcup_{\ell=1}^N\mathcal H_\ell.$$
Then we see that $\{g\in \HH^n: F(g)=0\} \subset \mathcal H$, and that $\mathcal H$ has measure zero.
Consequently, the measure of the set $\{g\in \HH^n: F(g)=0\}$ is zero.

Hence, we see that the convolution kernel $K(g)$ is homogeneous of degree $-2n-2$ and  that
$$ K(g)= C_2F(g), \quad g\in \mathbb S^n, $$
which is non-zero almost everywhere on $\mathbb S^n$ (the unit sphere in $\HH^n$). Thus,
non-degeneracy condition in Theorem \ref{nondegen} holds.

\smallskip
\ \  (2c)\ \ Second order Riesz transform $X_j X_k (-\Delta_\HH)^{-1}$ on $\HH^n$, $j,k\in\{1,2,\ldots,2n\}$.

Again, by using functional calculus for $(-\Delta_\HH)^{-1}$, it is direct to see that
$$X_j X_k(-\Delta_\HH)^{-1}=  \int_0^\infty  X_j X_k e^{h \Delta_\HH} \, dh,  $$
which together with the size and smoothness estimates for the heat kernel \cite{VSC}, shows that $X_j X_k (-\Delta_\HH)^{-1}$ is a Calder\'on--Zygmund operator on $\HH^n$. Denote the kernel of $X_j X_k (-\Delta_\HH)^{-1}$ by $K_{j,k}(g)$. We now verify the
non-degeneracy condition in our Theorem \ref{nondegen}. 

In fact, this follows from similar approach as we used in (2b). Without lost of generality, we take 
$$X_j={\partial \over \partial x_j} + 2x_{n+j} {\partial\over\partial t},\ \  j<n\quad{\rm and}\quad X_k={\partial \over \partial x_k} + 2x_{n-k} {\partial\over\partial t},\ \  k>n. $$
Then based on the formula \eqref{heat kernel ph} for heat kernel, we get that for $g\not=[0,0]$,
\begin{align*}
K_{j,k}(g)=
&\, d_K^{-2n-4}(g) \Big( F_1(g)+\i F_2(g) + F_3(g) + \i F_4(g) \Big), 
\end{align*}
where 
\begin{align*}
F_1(g)&=C_3 x_jx_k\int_{\R} \big(\cosh(\lambda)\big)^{-n-2} \cosh(\lambda+\i \phi)^2\, d\lambda,\\
F_2(g)&= -  C_3  x_{n+j}x_k  \int_{\mathbb R}\big(\cosh(\lambda)\big)^{-n-2} \cosh(\lambda+\i \phi) \sinh(\lambda+\i \phi)^2\, d\lambda,\\
F_3(g)&= C_4  x_{n-k}x_j  \int_{\mathbb R}\big(\cosh(\lambda)\big)^{-n-2}\ \cosh(\lambda+\i \phi) \sinh(\lambda+\i \phi)^2 \ d\lambda,\\
F_4(g)&=  - C_4 x_{n-k}x_{n+j}  \int_{\mathbb R}\big(\cosh(\lambda)\big)^{-n-2}\   \sinh(\lambda+\i \phi)^2 \ d\lambda,
\end{align*}
with $\phi$ defined as in \eqref{phi}, $C_3:=2n(2n+2)C_1 $, $C_4:=2(2n+2)C_2$ and $C_1, C_2$ are as in \eqref{C1C2}. By resorting to the analytic continuation as in (2b) and using the isolated zero point, we get that  $K_{j,k}(g)\not=0$ a.e. $g\in \HH^n$. Thus,
non-degeneracy condition in Theorem \ref{nondegen} holds.

\bigskip

 \noindent
 {\bf Acknowledgments:}
 J. Li would like to thank Prof. Richard Rochberg for the very helpful comments on improving the paper, and thank Prof. Sundaram Thangavelu for helpful discussions. Z. Fan would like to thank Prof. Lixin Yan and Minxing Shen for helpful discussions.

M. Lacey is a 2020 Simons Fellow, his Research is supported in part by grant  from the US National Science Foundation, DMS-1949206. J. Li is supported by the Australian Research Council through the
research grant DP170101060.

\end{document}